\crefname{hypothesis}{Hypothesis}{Hypotheses}
\crefname{fact}{Fact}{Facts}
\title{New global Carleman estimates and null controllability for forward/backward semi-linear parabolic SPDEs\thanks{Submitted to the editors DATE.
\funding{This work was supported by National Key Research and Development Program of China (\#2023YFC2206100), and  National Natural Science Foundation of China (\#12231008).}}}
\author{Lei Zhang\thanks{School of Mathematics and Statistics, Hubei Key Laboratory of Engineering Modeling  and Scientific Computing, Huazhong University of Science and Technology. 
		\email{lei\_zhang@hust.edu.cn }(Corresponding author), \email{d202280019@hust.edu.cn}, \email{binliu@mail.hust.edu.cn}}
	\and Fan Xu\footnotemark[2]
	\and Bin Liu\footnotemark[2]}
\begin{document}

\maketitle

% REQUIRED
\begin{abstract}
In this paper, we study the null controllability for parabolic SPDEs involving both the state and the gradient of the state. To start with, an improved global Carleman estimate for linear forward (resp. backward) parabolic SPDEs with general random coefficients and square-integrable source terms is derived. Based on this, we further develop a new global Carleman estimate for linear forward (resp. backward) parabolic SPDEs with source terms in the Sobolev space of negative order, which enables us to deal with the null controllability for linear backward (resp. forward) parabolic SPDEs with gradient terms. As a byproduct, a special weighted energy-type estimate for the controlled system that explicitly depends on the parameters $\lambda,\mu$ and  weighted function $\theta$ is obtained, which makes it possible to extend the previous null controllability  to semi-linear backward (resp. forward) parabolic SPDEs by applying the fixed-point argument in appropriate Banach space.
\end{abstract}

% REQUIRED
\begin{keywords}
Null controllability; Semi-linear parabolic SPDEs; Global Carleman estimates
\end{keywords}

% REQUIRED
\begin{MSCcodes}
 93B05; 93B07; 93C20; 60H15
\end{MSCcodes}

\section{Introduction}

The Carleman estimates were originally introduced by T. Carleman  \cite{carleman1939} to study the unique continuation for elliptic PDEs. Today, these estimates have emerged as a powerful tool in the study of PDEs \cite{Fu2012Sharp,Calderon,Tataru1995Unique}, the inverse problems \cite{imanuvilov2003carleman,Yamamoto2017Carleman} and the control problems \cite{Zhang2008On,fursikov1996controllability,zhang2010,Wang2003,Wang2018Time}. In particular, this type of estimates has been extensively employed to analyze the controllability for parabolic-type PDEs, see for instance \cite{Rousseau2011Local,Enrique2000Null,Yamamoto2004,Fern2006Exact,K2020Global,
	Kassab2020Null} and the references therein.

Over the past few years, there has been significant interest in Carleman estimates and controllability for stochastic partial differential equations (SPDEs) \cite{lu2021mathematical,luzhang2021}. %Nevertheless,  compared with deterministic PDEs, relatively little is known about the stochastic case.
Since the work by Barbu et al. \cite{Barbu2003Carleman}, the breakthrough in this direction dates back to
Tang and Zhang \cite{tang2009null}, where the authors derived an innovative Carleman estimate for the second-order stochastic parabolic operators with general random coefficients, and established the null controllability for linear forward/backward parabolic SPDEs. Inspired by this work, the controllability and observability arising from the stochastic control problems have been studied for other types of SPDEs, such as \cite{Zhang2007Carleman,luzhang2015,liu2019CARLEMAN,yuzhang2023,Lu2014Exact,Lu2013Observability,fuliuxu2017A} and so on. It is worth noting that the aforementioned studies primarily focused on the linear SPDEs, and the study for nonlinear SPDEs becomes more difficult. As it is pointed out in \cite[Remark 2.5]{tang2009null} and \cite[Chapter 9]{lu2021mathematical}, the main challenge in extending deterministic results to stochastic settings lies in the loss of temporal regularity in solutions and the absence of compact embedding for state spaces, which render the fixed-point argument commonly used in deterministic cases inapplicable.

The primary goal of this paper is to investigate the null controllability for a class of semi-linear parabolic SPDEs that involve both the state and the gradient of the state.  
The main theorems established in the present work provide a partial affirmative answer to the open questions provided in \cite[Remark 2.5]{tang2009null} and \cite[Section 4]{hernandez2023global}. To gain a more comprehensive understanding of the results closely associated with ours, we refer to Subsection \ref{sec1.1} below for details.

Now let us present a concise overview of our results, with detailed statements available in Subsection \ref{mainresult}:
(1) By adopting suitable weighted function, we establish a novel global Carleman estimate for the forward (resp. backward) linear parabolic SPDEs with general random coefficients and $L^2$-valued source terms;
(2) By virtue of the duality argument and penalized HUM method introduced by Lions, we derive a new global Carleman estimate for the forward (resp. backward) parabolic SPDEs with the source terms in $L^2_\mathbb{F}(0,T;H^{-1}(\mathcal {O}))$;
(3) With the above $H^{-1}$-Carleman estimate, we establish a global null controllability for linear backward (resp. forward) parabolic SPDEs involving both the state and the gradient of the state. In the meantime, an interesting energy-type estimate related to the parameters $\lambda,\mu$ and the weighted function $\theta$ is obtained;
(4) By performing a fixed-point argument (without using the compactness embedding results as for deterministic counterparts), we prove  global null controllability for the semi-linear backward (resp. forward) parabolic SPDEs.

\subsection{Comparison with related works}\label{sec1.1}

During the past decades, the null controllability of nonlinear parabolic PDEs has received much attention, see for instance \cite{Doubova2002,fursikov1996controllability,Yamamoto2004,Cara1997,Enrique2000Null,K2020Global}. One of the most effective strategies is first to analyze the controllability of a linearized system, and then extend the results to nonlinear cases by using  a  fixed-point argument.

However, the compactness properties (e.g., the Aubin-Lions lemma) frequently used in the deterministic cases cannot be applied to SPDEs, which constitutes  the main challenge in applying classical methodologies to prove the   controllability of nonlinear stochastic problems \cite{tang2009null,lu2021mathematical,luzhang2021}. In a recent study \cite{Hernandez-Santamaria2022}, the authors investigated the controllability of forward nonlinear parabolic SPDEs through the introduction of an innovative concept termed statistical null-controllability. Later, in their remarkable work \cite{hernandez2023global}, Hern{\'a}ndez-Santamar{\'\i}a  et al. considered the null controllability of semi-linear stochastic heat equations with state-dependent nonlinearity. A significant contribution was the development of new global Carleman estimates, which enabled achieving the controllability results for semi-linear SPDEs through a fixed-point argument. After submitting this work to arXiv, we are pleased to note that Zhang et al. \cite{zhangsen2024global} adapted the methodology in \cite{hernandez2023global} to establish the null controllability for the stochastic Ginzburg-Landau equations with nonlinearity depending on the state. In the meantime, by adopting a duality argument in \cite{liu2014global}, the authors established the controllability for one dimensional semi-linear stochastic Cahn-Hilliard type equations with low-order source terms \cite{zhang2025new}. Recently, Wang and Zhao \cite{wang2025null} carried out an in-depth investigation into the null controllability of semi-discrete stochastic semi-linear parabolic equations. In a more recent work \cite{hernandez2024p}, Hern{\'a}ndez-Santamar{\'\i}a  et al. derived $L^p$-estimates for linear backward SPDEs by means of innovative approaches, and then presented an application to local controllability for semi-linear backward SPDEs. At this stage, let us clarify that our novelty is to improve the results of \cite{hernandez2023global,tang2009null} to the general parabolic SPDEs with Lipschitz nonlinearities depending on both the state and its gradient. It should be noted that the $L^2$-Carleman estimate in \cite{hernandez2023global} cannot be directly applied to current case, and we shall overcome the difficulty by drawing upon the ideas put forward in \cite{liu2014global,tang2009null,imanuvilov2003carleman}.

To the authors' best knowledge, regarding the controllability for parabolic SPDEs with gradient terms, there are only a few results accessible in the literature. In a recent work \cite{baroun2025null} (see also the earlier result by Liu \cite[Remark 1.4]{liu2014global} for an alternative form of Carleman estimates), Baroun et al. considered the null controllability of linear forward/backward parabolic SPDEs involving first-order and zero-order coupling terms. In which the authors successfully solved this problem by establishing new Carleman estimates via the duality method that was developed in \cite{liu2014global}. Nevertheless, it is worth noting that the
$L^2$-Carleman estimates established in \cite{hernandez2023global,tang2009null} fail to be applicable for addressing gradient-dependent terms. Similarly, the newly derived
$H^{-1}$-Carleman estimates presented in \cite{baroun2025null} and \cite[Remark 1.4]{liu2014global} are still insufficient to tackle the nonlinear controllability problem. Another novelty of the present work lies in solving this difficulty by appropriately combining the techniques discovered in \cite{hernandez2023global,tang2009null}, the penalized HUM method by Lions \cite{Lions1988} and the duality argument \cite{liu2014global,baroun2025null}. In this respect, it is worth mentioning the study by Zhang et al. \cite{zhang2025new} that leverages a similar analytical approach in investigating the null controllability of forward stochastic Cahn-Hilliard equations with the principal part characterized by one-dimensional operator $\mathrm{d}u+u_{xxxx} \mathrm{d}t$, which however leads to a limitation that the results cannot be applied to the present work. We remark  that in addition to forward controlled SPDEs, we also study the null controllability for backward parabolic SPDEs, which leads to an improvement of classical Carleman estimates for some forward SPDEs involving controls imposed on both the drift and diffusion terms. This can be seen as a nontrivial extension of the recent result \cite[Theorem 1.2]{baroun2025null} to the nonlinear scenario.

Compared with \cite{hernandez2023global}, another difference in our study lies in the consideration of the parabolic SPDEs with general random coefficients depending on $(\omega,t,x)$. This type of SPDEs has important applications in the stochastic optimal control and filtering theory \cite{lu2021mathematical}, which have received much attention from the perspective of SPDE theory (e.g.,  \cite{krylov2009divergence,agresti2025nonlinear,qiu2020l2,hsu2017stochastic}). Moreover, the appearance of the general coefficients brings several additional interaction terms that do not arise in the context of the Laplacian with constant coefficients, thereby necessitating detailed and precise analytical work. What is particularly interesting to highlight is that the coefficients of principal part possess $W^{2,\infty}$-regularity in space (see (A$_1$) below), a natural question is how to determine the minimal regularity condition. Indeed, existing studies have addressed this problem in deterministic settings (e.g., \cite{doubova2002exact,Rousseau2011Local}), but knowledge regarding the stochastic control problem remains scarce, see \cite[Section 9.7]{lu2021mathematical} for more details.

\subsection{Notations and assumptions}

Let $\mathcal {O} \subset \mathbb{R}^n(n \in \mathbb{N})$ be a bounded domain with a $\mathcal {C}^4$ boundary $\partial \mathcal {O}$, and $\mathcal {O}' \subset \mathcal {O}$ be a nonempty open subset. For any $T>0$, set $\mathcal {O}_T=(0, T) \times \mathcal {O}$, $\Sigma_T=(0, T) \times \partial \mathcal {O}$ and $\mathcal {O}'_T=(0, T) \times \mathcal {O}'$. For any subset $A\subseteq \mathbb{R}^n$, we denote by $\chi_{A} $ the characteristic function of $A$. For a positive integer $k$, we denote by $O(\mu^k)$ a function of order $\mu^k$ for large $\mu$, which is independent of $\lambda$ and $T$.

Let $(\Omega, \mathcal {F}, \mathbb{F}, \mathbb{P})$ be a fixed complete filtered probability space on which a standard one-dimensional Brownian motion $\{W(t)\}_{t \geq 0}$ is defined and such that $\mathbb{F}=\left\{\mathcal{F}_t\right\}_{t \geq 0}$ is the natural filtration generated by $W(\cdot)$, augmented by all the $\mathbb{P}$-null sets in $\mathcal{F}$. Given a Banach space $(H,\|\cdot\|_H)$, let  $L_{\mathcal{F}_t}^2(\Omega ; H)$ be the space of all $\mathcal{F}_t$-measurable random variables $\xi$ such that $\mathbb{E}\|\xi\|_H^2<\infty$. For any $T>0$, let $L_{\mathbb{F}}^2(0, T ; H)$ be the space consisting of all $H$-valued $\mathbb{F}$-adapted processes $X(\cdot)$ such that $\mathbb{E}(\|X(\cdot)\|_{L^2(0, T ; H)}^2)<\infty$; $L_{\mathbb{F}}^{\infty}(0, T ; H)$ be the space consisting of all $H$-valued $\mathbb{F}$-adapted bounded processes; and $L_{\mathbb{F}}^2(\Omega ; \mathcal {C}([0, T] ; H))$ be the space consisting of all $H$-valued $\mathbb{F}$-adapted continuous processes $X(\cdot)$ such that $\mathbb{E}(\|X(\cdot)\|_{\mathcal {C}([0, T] ; H)}^2)<\infty$.  

For the operators $\mathrm{d} u\pm\nabla\cdot(\mathcal {A} \nabla u) \mathrm{d}t$ and the nonlinearities, we assume that
\begin{itemize} [itemsep=0pt, leftmargin=28pt]
	\item [(\textbf{A$_1$})] Let $\mathcal {A}=(a^{ij})_{1\leq i,j\leq n}$ be a $n \times n$ matrix with the random coefficients $a^{ij}: \Omega \times [0,T]\times \overline{\mathcal {O}}\rightarrow \mathbb{R}$ satisfying the following conditions:
	
	\item[]1)  $a^{ij}=a^{ji}$ and $ a^{ij}\in L^\infty_\mathbb{F}(\Omega;\mathcal {C}^1([0,T];W^{2,\infty}(\mathcal {O})))$, $i,j=1,2,...,n$.
	
	\item[]2)  There is a constant $c_0>0$ such that
	$
	(\mathcal {A} \xi,\xi)_{L^2}=\sum_{i,j}a^{ij}(\omega,t,x)\xi_i\xi_j\geq c_0 |\xi|^2,
	$
	for any $ (\omega,t,x,\xi) \in \Omega \times \mathcal {O}_T \times\mathbb{R}^n$. Here and in the sequel, we frequently use the notations $\sum_{i} $ instead of $\sum_{i=1}^n$, $\sum_{i,j} $ instead of $\sum_{i,j=1}^n$, etc.
\end{itemize}
 
\begin{itemize} [itemsep=0pt, leftmargin=28pt]
	\item [(\textbf{A$_2$})] 1) For each $(y,Y) \in H^1_0(\mathcal {O})\times L^2(\mathcal {O})$, $F(\cdot, \cdot, \cdot, y,\nabla y,Y)$ is a $\mathbb{F}$-adapted and $L^2$-valued stochastic processes.
	
	\item[]2)  For any $ \left(\omega, t, x\right) \in \Omega \times \mathcal {O}_T$,
	$
	F\left(\omega, t, x, 0,\textbf{0},0\right)=0.
	$
	
	\item[]3)  There exists a constant $ L>0$ such that
	\begin{fontsize}{9.5pt}{0pt}\begin{equation*}
		\begin{split}
			|F(\omega, t, x, a_1,\textbf{b}_1,c_1)-F(\omega, t, x, a_2,\textbf{b}_2,c_2)| \leq L(|a_1-a_2|+ |\textbf{b}_1-\textbf{b}_2| +|c_1-c_2|),
		\end{split}
	\end{equation*}\end{fontsize}
	for any $ \left(\omega, t, x, a_1, a_2,\textbf{b}_1,\textbf{b}_2,c_1,c_2\right) \in \Omega \times \mathcal {O}_T \times \mathbb{R}^2 \times (\mathbb{R}^n)^2\times \mathbb{R}^2 $.
\end{itemize}

\begin{itemize} [itemsep=0pt, leftmargin=28pt]
	\item [(\textbf{A$_3$})] 1) For each $y\in H^1_0(\mathcal {O})$, $F_i(\cdot, \cdot, \cdot, y,\nabla y)$, $i=1,2$, are $\mathbb{F}$-adapted and $L^2$-valued stochastic processes.
	
	\item[]2)  For any $ \left(\omega, t, x\right) \in \Omega \times \mathcal {O}_T$, we have
	$
	F_i\left(\omega, t, x, 0,\textbf{0}\right)=0$, $i=1,2.
	$
	
	\item[]3)  There exists a constant $ L_i>0$ such that
	\begin{equation*}
		\begin{split} 
			\left|F_i\left(\omega, t, x, a_1,\textbf{b}_1\right)-F_i\left(\omega, t, x, a_2,\textbf{b}_2\right)\right| \leq L_i\left(\left|a_1-a_2\right|+ \left|\textbf{b}_1-\textbf{b}_2\right|\right),~ i=1,2,
		\end{split}
	\end{equation*}
	for any $ \left(\omega, t, x, a_1, a_2,\textbf{b}_1,\textbf{b}_2\right) \in \Omega \times \mathcal {O}_T \times \mathbb{R}^2 \times (\mathbb{R}^n)^2$.
\end{itemize}

\subsection{Main results}\label{mainresult}
The following result is crucial to defining weighted functions.

\begin{lemma}[\cite{fursikov1996controllability}]\label{weight}
	Let $\mathcal {O}_1$ be a nonempty subset of $\mathcal {O}$ such that  $\mathcal {O}_1\subset\subset \mathcal {O}'$ (i.e., $\overline{\mathcal {O}}_1\subset \mathcal {O}'$), then there exists a function $\beta\in \mathcal {C}^4(\overline{\mathcal {O}};[0,1])$ such that
	$
	0< \beta (x)\leq 1~\textrm{in}~\mathcal {O}$, $ \beta(x)=0$ on $\partial \mathcal {O}$ and $\inf_{x \in \mathcal {O} \backslash \overline{\mathcal {O}}_1}|\nabla\beta(x)|\geq a_0>0$.
\end{lemma}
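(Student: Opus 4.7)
The plan is to manufacture $\beta$ in two stages: first produce a preliminary $\mathcal{C}^4$ function $\tilde\beta$ that already enjoys the boundary vanishing and the interior positivity together with a nonvanishing gradient near $\partial\mathcal{O}$, and then reshape it by a diffeomorphism of $\overline{\mathcal{O}}$ so that all of its interior critical points are transported into the prescribed small subset $\mathcal{O}_1$. A final rescaling will enforce the range $[0,1]$.

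For the first stage, since $\partial\mathcal{O}\in\mathcal{C}^4$, the signed distance function to $\partial\mathcal{O}$ is $\mathcal{C}^4$ in a tubular neighborhood of the boundary. Gluing it, via a smooth partition of unity, with a strictly positive $\mathcal{C}^4$ bump supported in a compact subset of $\mathcal{O}$, I would obtain $\tilde\beta\in\mathcal{C}^4(\overline{\mathcal{O}};[0,\infty))$ satisfying $\tilde\beta=0$ on $\partial\mathcal{O}$, $\tilde\beta>0$ in $\mathcal{O}$, and $|\nabla\tilde\beta|$ bounded below near $\partial\mathcal{O}$ (e.g.\ from $\partial_\nu\tilde\beta<0$ on $\partial\mathcal{O}$). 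A standard Morse-type $\mathcal{C}^4$ perturbation supported away from $\partial\mathcal{O}$ then lets me assume that the critical set of $\tilde\beta$ inside $\mathcal{O}$ is a finite collection $\{x_1,\ldots,x_m\}$.

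For the second stage, I fix pairwise distinct target points $y_1,\ldots,y_m\in\mathcal{O}_1$ and, for each $k$, join $y_k$ to $x_k$ by a smooth embedded arc $\gamma_k$ lying in $\mathcal{O}$; because $\mathcal{O}$ is open and connected (treated componentwise otherwise) and $n\ge 1$, these arcs can be arranged pairwise disjoint. Integrating a compactly supported $\mathcal{C}^4$ time-dependent vector field concentrated in a thin tube around $\bigcup_k\gamma_k$ then produces a $\mathcal{C}^4$-diffeomorphism $\Phi:\overline{\mathcal{O}}\to\overline{\mathcal{O}}$ which equals the identity in a neighborhood of $\partial\mathcal{O}$ and satisfies $\Phi(y_k)=x_k$ for every $k$. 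Setting $\beta:=c\,\tilde\beta\circ\Phi$ with $c=1/\max_{\overline{\mathcal{O}}}\tilde\beta$, one gets a $\mathcal{C}^4$ function valued in $[0,1]$, strictly positive in $\mathcal{O}$, and vanishing on $\partial\mathcal{O}$. The chain rule gives $\nabla\beta(x)=c\,D\Phi(x)^{\top}\nabla\tilde\beta(\Phi(x))$, which vanishes precisely on $\{y_1,\ldots,y_m\}\subset\mathcal{O}_1$; since $\overline{\mathcal{O}}\setminus\mathcal{O}_1$ is compact and contains $\mathcal{O}\setminus\overline{\mathcal{O}}_1$, the continuous function $|\nabla\beta|$ attains a strictly positive minimum there, which serves as the desired constant $a_0$.

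The main technical obstacle is the second stage: producing a single $\mathcal{C}^4$-diffeomorphism $\Phi$ of $\overline{\mathcal{O}}$ that is the identity near $\partial\mathcal{O}$, has pairwise disjointly supported deformations along the arcs $\gamma_k$, and exactly realizes the assignment $y_k\mapsto x_k$. This is a standard but delicate application of the isotopy-extension/transitivity property of the diffeomorphism group, carried out by carefully choosing the tubes around the arcs and the speeds of the generating vector fields so that the time-one flow stays inside $\overline{\mathcal{O}}$ and prescribes the required values at the $y_k$.
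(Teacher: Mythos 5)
The paper does not prove Lemma \ref{weight} at all---it is imported verbatim from \cite{fursikov1996controllability}---and your construction (boundary distance glued to an interior bump, a Morse-type perturbation making the interior critical set finite, then a $\mathcal{C}^4$ diffeomorphism equal to the identity near $\partial\mathcal{O}$ that relocates those critical points into $\mathcal{O}_1$, followed by rescaling) is essentially the classical Fursikov--Imanuvilov/Milnor-homogeneity argument used there, so the approach matches and is sound in outline. Two details to tidy: the ``pairwise disjoint arcs'' device fails for $n=1$, where you should instead choose the targets $y_k\in\mathcal{O}_1$ ordered like the $x_k$ (a boundary-fixing diffeomorphism of an interval is monotone), and the argument implicitly requires $\mathcal{O}_1$ to be open (as in the cited reference), so that the relocated critical points lie in the interior of $\overline{\mathcal{O}}_1$ and the infimum of $|\nabla\beta|$ over $\mathcal{O}\setminus\overline{\mathcal{O}}_1$ is indeed positive.
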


Without loss of generality, in the following sections we assume that $0<T<1$. Inspired by \cite{hernandez2023global} (see \cite{badra2016local} for the deterministic case), for any   numbers $m\geq 1$ and $\mu\geq 1$, let us consider the weighted functions
$\varphi(x,t)= \gamma(t)(e^{\mu (\beta(x)+6m)}-\mu e^{6\mu (m+1)})$, and $\xi(x,t)= \gamma(t) e^{\mu (\beta(x)+6m)}$,
where $\gamma:(0,T]\mapsto \mathbb{R}^+$ is a $\mathcal {C}^2$-function satisfying
\begin{equation}\label{2.3}
	\gamma(t)=\left\{
	\begin{aligned}
		&t^{-m}  &&  \textrm{in}~ (0,T/4],\\
		&\textrm{is decreasing}   &&   \textrm{in}~[T/4,T/2],\\
		&1&&     \textrm{in}~{[T/2,3T/4]},\\
		&1+(1- 4 T^{-1}(T-t) )^\sigma  &&     \textrm{in}~[3T/4,T].
	\end{aligned}
	\right.
\end{equation}
The parameter 
$\sigma= \lambda\mu^2e^{\mu(6m-4)}> 2$,  for all $\lambda\geq 1.$ Furthermore, we  define
\begin{equation}\label{2.4}
	\begin{split}
		\theta (x,t)= e^{\ell(x,t)}\quad \textrm{and} \quad \ell(x,t)= \lambda \varphi (x,t).
	\end{split}
\end{equation}
From the definition of $\gamma$, it is clear that $\lim_{t\rightarrow T^-}\gamma(t)= 2$ and $\lim_{t\rightarrow 0^+}\gamma(t)= +\infty$, which is a bit different from the classical weighted functions used in \cite{tang2009null,liu2014global,lu2021mathematical}.

Our first main goal is to study the null controllability for semi-linear backward parabolic SPDEs (see \eqref{jj6}). To this end, let us consider the linearized system
\begin{equation}\label{2.1}
	\left\{
	\begin{aligned}
		&\mathrm{d} z- \nabla\cdot (\mathcal {A}\nabla z) \mathrm{d}t= \left(\langle \textbf{a}, \nabla z\rangle+ \alpha z+ \phi_1+ \nabla\cdot \textbf{b} \right)\mathrm{d}t+\phi_2\mathrm{d}W_t ~~~\textrm{in}~\mathcal {O}_T,\\
		&z=0~~\textrm{on}~\Sigma_T,~~~
		z(0) = z_0~~\textrm{in}~\mathcal {O},
	\end{aligned}
	\right.
\end{equation}
where $\textbf{a}\in L_\mathbb{F}^\infty(0,T;L^\infty(\mathcal {O};\mathbb{R}^n))$, $\textbf{b} \in L^2_\mathbb{F}(0,T;L^2(\mathcal {O};\mathbb{R}^n))$, $\alpha\in L_\mathbb{F}^\infty(0,T;L^\infty(\mathcal {O}))$, $\phi_1 \in L_\mathbb{F}^2(0,T;L^2(\mathcal {O}))$ and $\phi_2\in L_\mathbb{F}^2(0,T;H^1(\mathcal {O}))$. Under the  condition (A$_1$), for any $z_0\in L^2_{\mathcal {F}_0}(\Omega; L^2(\mathcal {O}))$, it is  well-known \cite[Theorem 12.3]{peszat2007stochastic} that the system \eqref{2.1} has a unique solution
$
z\in \mathcal {W}_T =  L^2_\mathbb{F}(\Omega;\mathcal {C}([0,T];L^2(\mathcal {O})))\bigcap L^2_\mathbb{F}(0,T;H^1_0(\mathcal {O})).
$

%The following result provides an improved global $L^2$-Carleman estimate for the forward system \eqref{2.1} with $\textbf{b}\equiv\textbf{0}$.

\begin{theorem} \label{thm1}
	Assume that $\textbf{b}\equiv\textbf{0} $ in \eqref{2.1} and condition (A$_1$) holds. There exist $\lambda_0,\mu_0>0$ such that, for all $\lambda \geq \lambda_0$, $\mu \geq \mu_0$, the unique solution $z$ to  \eqref{2.1} satisfies
	%\begin{equation}
	%\begin{split}
	%& \mathbb{E}\int_{\mathcal {O}'}\theta^2 (T)|\nabla z(T)|^2\mathrm{d}x +  \mathbb{E}\int_{\mathcal {O}'}\lambda^2 \mu^3 e^{2\mu(6m+1)}\theta^2(T) z^2(T) \mathrm{d}x \\
	%&\quad +\mathbb{E} \int_{\mathcal {O}_T} \lambda\mu^2\xi \theta^2 | \nabla z |^2 \mathrm{d}x\mathrm{d}t+ \mathbb{E} \int_{\mathcal {O}_T} \lambda^3\mu^4\xi^3 \theta^2  z^2 \mathrm{d}x\mathrm{d}t \\
	% &\quad \leq C\Big( \mathbb{E}\int_{\mathcal {O}_T} \theta^2\left( \lambda^2 \mu^2  \xi ^3\phi_2^2 + | \nabla \phi_2|^2 +  \phi_1 ^2\right)  \mathrm{d}x\mathrm{d}t+\mathbb{E}\int_{\mathcal {O}'_T} \lambda^3\mu^4 \xi^3\theta^2 z^2 \mathrm{d}x\mathrm{d}t \Big).
	%\end{split}
	%\end{equation}	
	\begin{fontsize}{8.5pt}{8.5pt}\begin{equation}\label{carleman1}
		\begin{split}
			& \mathbb{E}\int_{\mathcal {O}} e^{2\lambda\varphi(T)}\big(\lambda^{2} \mu^{3} e^{2\mu(6m+1)}   z^2(T)  +  |\nabla z(T)|^2\big)\mathrm{d}x  +\mathbb{E} \int_{\mathcal {O}_T} \lambda\mu^{2} \xi \theta^2\big( | \nabla z |^2 + \lambda^2 \mu^2 \xi^{2}  z^2\big) \mathrm{d}x\mathrm{d}t \\
			& \leq C \Big[\mathbb{E}\int_{\mathcal {O}_T} \theta^2\big(\lambda^{2} \mu^{2}  \xi^{3} \phi_2^2 +| \nabla  \phi_2 |^2   + \phi_1^2\big)  \mathrm{d}x\mathrm{d}t +\mathbb{E}\int_{\mathcal {O}'_T} \lambda^{3}\mu^{4} \xi^{3}\theta^2 z^2 \mathrm{d}x\mathrm{d}t \Big],
		\end{split}
	\end{equation}
\end{fontsize}
\end{theorem}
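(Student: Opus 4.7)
\emph{Proof plan.} The plan is to combine the stochastic weighted pointwise identity of Tang--Zhang \cite{tang2009null} with the modified weight strategy of \cite{hernandez2023global}. First I would set $v=\theta z$; by the chain rule $v$ satisfies an SPDE of the form
\begin{equation*}
dv-\nabla\cdot(\mathcal{A}\nabla v)\,dt = (\mathcal{L}_s v+\mathcal{L}_a v)\,dt + \theta\bigl(\phi_1+\langle\textbf{a},\nabla z\rangle+\alpha z\bigr)dt + \theta\phi_2\,dW_t,
\end{equation*}
where $\mathcal{L}_s v$ carries the leading conjugated term $\lambda^2\mu^2\xi^2\langle\mathcal{A}\nabla\beta,\nabla\beta\rangle v$ and $\mathcal{L}_a$ is the first-order antisymmetric piece. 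Multiplying by $\mathcal{L}_s v$, computing pointwise in $x$ via It\^o's rule, and regrouping divergence-in-$x$ and exact time-differential terms, I would derive a fundamental identity of the schematic form
\begin{equation*}
d\mathcal{E}(v)+\nabla_x\cdot\mathcal{Q}\,dt + \mathcal{P}(v)\,dt = \mathcal{R}(v)\,dt + \mathcal{M}(v)\,dW_t,
\end{equation*}
in which $\mathcal{P}(v)\geq c\bigl(\lambda\mu^2\xi|\nabla v|^2+\lambda^3\mu^4\xi^3 v^2\bigr)$ on $\mathcal{O}\setminus\mathcal{O}_1$ by Lemma~\ref{weight} and (A$_1$)~2), and $\mathcal{E}(v)$ is a nonnegative quadratic form in $v,\nabla v$.

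Integrating in $(t,x)$ and taking expectation, the $x$-boundary flux vanishes because $v|_{\partial\mathcal{O}}=0$. The key novelty is the time-boundary behavior dictated by $\gamma$: since $\gamma(t)\to+\infty$ as $t\to 0^+$ the endpoint at $t=0$ contributes nothing and the It\^o martingale has zero expectation, whereas $\gamma(T)=2$ and $\gamma'(T)=4\sigma/T=O(\lambda\mu^2)$ are finite and nonzero, so the trace $\mathbb{E}\,\mathcal{E}(v)(T)$ persists on the left-hand side. Unpacking $\mathcal{E}$ and returning to $z$ then produces precisely the pointwise-at-$T$ contribution $\mathbb{E}\int_\mathcal{O} e^{2\lambda\varphi(T)}\bigl(\lambda^2\mu^3 e^{2\mu(6m+1)}z^2(T)+|\nabla z(T)|^2\bigr)dx$ appearing in \eqref{carleman1}; the extra factor of $\mu$ on the $z^2(T)$ coefficient originates from $\gamma'(T)$ combined with one differentiation of $e^{\mu(\beta+6m)}$.

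To localize the observation I would choose $\eta\in C_c^\infty(\mathcal{O}')$ with $\eta\equiv 1$ on $\mathcal{O}_1$ and test the equation for $z$ against $\eta^2\lambda^3\mu^4\xi^3\theta^2 z$; integration by parts exchanges the portion of $\mathcal{P}(v)$ on $\mathcal{O}_1$ not covered by Lemma~\ref{weight} for the observation integral $\mathbb{E}\int_{\mathcal{O}'_T}\lambda^3\mu^4\xi^3\theta^2 z^2\,dxdt$, plus errors absorbed by the main Carleman quantities. The Lipschitz perturbations $\langle\textbf{a},\nabla z\rangle$ and $\alpha z$ are pointwise bounded by $C\theta^2(|\nabla z|^2+z^2)$ and absorbed on the left for $\lambda,\mu$ large; $\phi_1$ enters directly as $\theta^2\phi_1^2$; and the It\^o cross terms that arise when the identity is also read on the gradient level (needed to recover the $|\nabla z(T)|^2$ trace) generate the mixed right-hand side $\theta^2(\lambda^2\mu^2\xi^3\phi_2^2+|\nabla\phi_2|^2)$.

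The main obstacle is the interplay between the random, spatially inhomogeneous principal coefficients $a^{ij}(\omega,t,x)$ and the new finite weight at $t=T$. Under (A$_1$)~1), each commutation of $\partial_t$ or $\nabla$ with $a^{ij}$ produces remainders that, multiplied by the non-vanishing $\gamma'(T)$ and the pointwise traces at $t=T$, contribute new boundary terms requiring strict domination by $\mathbb{E}\,\mathcal{E}(v)(T)$. This is precisely what forces the calibration $\sigma=\lambda\mu^2 e^{\mu(6m-4)}$ and the specific coefficient balance on the left-hand side of \eqref{carleman1}. Verifying that every error term is absorbed uniformly for $\lambda\geq\lambda_0$, $\mu\geq\mu_0$ with constants independent of $T\in(0,1)$ is the delicate step at the heart of the argument.
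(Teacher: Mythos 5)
Your route is essentially the paper's: conjugate with $h=\theta z$, run the Tang--Zhang weighted identity driven by the symmetric part of the conjugated operator, exploit that $\gamma$ is finite and increasing at $t=T$ so the terminal traces survive (with $\sigma=\lambda\mu^2e^{\mu(6m-4)}$ generating, through $\ell_t(T)$, the coefficient $\lambda^2\mu^3e^{2\mu(6m+1)}$), treat the It\^o corrections as the source of the $\theta^2(\lambda^2\mu^2\xi^3\phi_2^2+|\nabla\phi_2|^2)$ terms, and finish with a cutoff localization in $\mathcal {O}'$. However, two steps of your plan would not survive being written out. The localization multiplier is miscalibrated: testing the $z$-equation against $\eta^2\lambda^3\mu^4\xi^3\theta^2 z$ produces, when the gradient falls on the weight, cross terms of size $\lambda^4\mu^5\xi^4\theta^2|z||\nabla z|$, and after Young's inequality these leave localized $z^2$-terms with weight at least $\lambda^5\mu^6\xi^5\theta^2$, which is \emph{not} dominated by the observation term $\lambda^{3}\mu^{4}\xi^{3}\theta^2z^2$ on $\mathcal {O}'_T$; you would therefore prove a strictly weaker inequality than \eqref{carleman1}. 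The paper instead applies It\^o's formula to $\lambda\mu^2\zeta^2\xi\theta^2z^2$ (multiplier weight $\lambda\mu^2\xi$, see \eqref{2.35}), which is exactly strong enough to control the local gradient term appearing in the identity and whose error terms are precisely of size $\lambda^3\mu^4\xi^3\theta^2z^2$ on $\mathcal {O}'$.

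Second, you never address the sign-indefinite interior terms proportional to $\gamma_t$ coming from $\mathcal {L}_t$ (the paper's \eqref{2.24d}). These carry a factor $\gamma_t\sim\sigma/T$ near $t=T$ and cannot simply be ``absorbed by the main Carleman quantities''; the paper splits $(0,T/2]$, $[T/2,3T/4]$, $[3T/4,T]$, uses $|\gamma_t|\le C\gamma^2$ and $|\gamma\varphi|\le\mu\xi^2$ on the first piece, $\gamma_t\equiv0$ on the second, and the favourable signs $\gamma_t\ge0$, $\varphi\le0$ on the third, keeping the good-signed part on the left and cancelling its localized remainder on $\mathcal {O}_1$ against the negative term $\eqref{2.35}_{33}$ generated by the very same cutoff computation. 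This pairing is the delicate point of the proof and is invisible in your sketch (and would in any case be unavailable with your choice of multiplier). A minor correction as well: the extra $\mu$ in $\lambda^2\mu^3e^{2\mu(6m+1)}$ comes from the summand $-\mu e^{6\mu(m+1)}$ built into $\varphi$ multiplied by $\gamma_t(T)=4\sigma/T$, not from differentiating $e^{\mu(\beta+6m)}$.
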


\begin{remark}
	This type of Carleman estimate was first established  by Badra et al. \cite{badra2016local} to deal with the local trajectory controllability for incompressible
	Navier-Stokes equations. Later, Hernandez-Santamaria et al. \cite{hernandez2023global} developed the ideas in \cite{badra2016local} to investigate the null controllability of stochastic heat equations with Lipschitz nonlinearity depending on the state. Theorem \ref{thm1} improves the result \cite[Lemma 3.2]{hernandez2023global} by considering general random coefficients, which also can be viewed as a refined version of the classical Carleman estimate established in \cite[Theorem 5.2]{tang2009null} and \cite[Lemma 3.2]{baroun2025null}. As mentioned in Subsection \ref{sec1.1}, the extra terms, particularly the interaction terms involving the derivatives of $a^{ij}$ and $\gamma$, make the proof more complicated. Moreover, the problem of lowering the regularity for the coefficients remains unresolved in the stochastic framework, we refer to \cite{doubova2002exact,Rousseau2011Local} for the deterministic cases.
\end{remark}

%The following Carleman estimate is new for \eqref{2.1} in negative Sobolev space.

\begin{theorem} \label{thm2}
	Assume that $\phi_2 \in L_\mathbb{F}^2(0,T;L^2(\mathcal {O}))$, $\textbf{b} \in L^2_\mathbb{F}(0,T;L^2(\mathcal {O};\mathbb{R}^n))$ and the condition (A$_1$) holds. Then for any $z_0 \in L^2_{\mathcal {F}_0}(\Omega; L^2 (\mathcal {O}))$, there exist positive constants $\lambda_1$ and $\mu_1$, depending only on $\mathcal {O},\mathcal {O}'$ and $T$, such that for all $\lambda\geq \lambda_1$ and $\mu \geq \mu_1$, the unique solution $z$ of \eqref{2.1} satisfies
	\begin{equation}\label{carleman2}
		\begin{split}
			&\mathbb{E}\int_\mathcal {O}\lambda\mu^2(\xi\theta^2)(T)z^2(T)\mathrm{d}x+\mathbb{E}\int_{\mathcal {O}_T} \lambda\mu^{2} \xi\theta^2(\lambda^{2}\mu^{2}\xi^{2}  z^2  +  |\nabla z|^2) \mathrm{d}x\mathrm{d}t\\
			& \leq C\Big[\mathbb{E} \int_{\mathcal {O}'_T} \lambda^{3}\mu^{4}\xi^{3} \theta^2  z^2 \mathrm{d}x\mathrm{d}t + \mathbb{E}\int_{\mathcal {O}_T} \big[ \theta^{2} \phi_1^2  + \lambda^{2}\mu^{2}\xi^{2}  \theta^{2}  (\phi_2^2  +    |\textbf{b}|^2) \big] \mathrm{d}x\mathrm{d}t  \Big].
		\end{split}
	\end{equation}
	
\end{theorem}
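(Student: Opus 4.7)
The strategy is to deduce Theorem~\ref{thm2} from Theorem~\ref{thm1} via a duality argument combined with the penalized HUM method of Lions, in the spirit of \cite{liu2014global,baroun2025null,hernandez2023global}. Theorem~\ref{thm1} cannot be applied directly because it requires $\phi_2\in L^2_\mathbb{F}(0,T;H^1)$ and $\mathbf{b}\equiv\mathbf{0}$, whereas here $\phi_2\in L^2$ only and we allow the $H^{-1}$-valued source $\nabla\cdot\mathbf{b}$. The idea is to transfer these low-regularity quantities onto a dual adjoint process: after Itô's formula and integration by parts, they will be paired against $\nabla p$ and the martingale coefficient $Q$, which sit on the LHS of Theorem~\ref{thm1} and are therefore controllable.

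First I would introduce the formal adjoint backward SPDE
\begin{equation*}
\begin{cases}
-dp - \nabla\cdot(\mathcal{A}\nabla p)\,dt + \nabla\cdot(\mathbf{a}p)\,dt - \alpha p\,dt = h\,dt + Q\,dW_t & \text{in }\mathcal{O}_T,\\
p=0\text{ on }\Sigma_T, & p(T)=\eta,
\end{cases}
\end{equation*}
for adapted $h$ and $\mathcal{F}_T$-measurable $\eta$ to be chosen; well-posedness in $\mathcal{W}_T\times L^2_\mathbb{F}(0,T;L^2(\mathcal{O}))$ is standard under (A$_1$), and Theorem~\ref{thm1} applies to it after time reversal. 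Applying Itô's formula to $\int_\mathcal{O} zp\,dx$, the principal and transport terms cancel by construction of the adjoint, and integration by parts on $\nabla\cdot\mathbf{b}$ yields
\begin{equation*}
\mathbb{E}\langle\eta,z(T)\rangle + \mathbb{E}\int_{\mathcal{O}_T}hz\,dx\,dt = \mathbb{E}\langle p(0),z_0\rangle + \mathbb{E}\int_{\mathcal{O}_T}\bigl(\phi_1 p - \mathbf{b}\cdot\nabla p + \phi_2 Q\bigr)\,dx\,dt.
\end{equation*}
I would then choose $\eta\sim\lambda\mu^2(\xi\theta^2)(T)z(T)$ and $h$ of the form $\lambda^3\mu^4\xi^3\theta^2 z - \nabla\cdot(\lambda\mu^2\xi\theta^2\nabla z)$ so that the LHS reproduces the LHS of \eqref{carleman2} up to lower-order perturbations. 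The term $\mathbb{E}\langle p(0),z_0\rangle$ is absorbed by the degeneracy $\theta(\cdot,0^+)=0$ inherited from $\gamma(0^+)=+\infty$, which is precisely why the modified weight $\gamma$ from \cite{hernandez2023global} is adopted here. On the RHS, Young's inequality splits each of the three pairings $\phi_1 p$, $\mathbf{b}\cdot\nabla p$, and $\phi_2 Q$ into small multiples of the natural LHS weights of Theorem~\ref{thm1} for $(p,Q)$ plus controlled multiples of the weighted source norms on the RHS of \eqref{carleman2}.

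Applying Theorem~\ref{thm1} to the adjoint system then dominates the small multiples by the weighted $L^2$-norm of $h$ (which by our choice is bounded by the LHS of \eqref{carleman2}) plus the residual observation $\mathbb{E}\int_{\mathcal{O}'_T}\lambda^3\mu^4\xi^3\theta^2 p^2$. The final task is to transfer this $p$-observation into the required $z$-observation on $\mathcal{O}'_T$: following Lions, I would introduce a cut-off subdomain $\mathcal{O}_1\subset\subset\mathcal{O}'$, a penalized quadratic cost on an auxiliary linear control problem whose state is $z$ and whose control is supported in $\mathcal{O}_1$, and extract from the optimality system the desired observation transfer plus errors absorbable into the RHS of \eqref{carleman2} once $\lambda,\mu$ are taken large. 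The principal technical obstacle is precisely this observation-transfer step, which requires careful matching of weight powers between the Carleman bound for $(p,Q)$ and the penalized HUM functional. A secondary obstacle is the Young-inequality tuning in the previous paragraph: since the three source terms $\phi_1$, $\mathbf{b}$, $\phi_2$ carry the distinct weight scales $\theta^2$, $\lambda^2\mu^2\xi^2\theta^2$, $\lambda^2\mu^2\xi^2\theta^2$ on the RHS of \eqref{carleman2}, a single uniform splitting does not suffice and several pairings with different scaling coefficients must be combined while preserving the absorption into the LHS of Theorem~\ref{thm1}.
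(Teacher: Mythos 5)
Your overall strategy (duality against an adjoint object plus penalized HUM, with the weighted quantities $\lambda^3\mu^4\xi^3\theta^2 z-\nabla\cdot(\lambda\mu^2\xi\theta^2\nabla z)$ fed into the dual equation) is in the right spirit, but two of your concrete steps would fail. First, you cannot apply Theorem \ref{thm1} to the uncontrolled backward adjoint pair $(p,Q)$ ``after time reversal'': a backward SPDE solution is adapted to the forward filtration, so time reversal does not produce a forward It\^{o} equation, and even formally the weight $\gamma$ is not symmetric in time (it blows up at $t=0$ and equals $2$ at $t=T$), so a reversed-time estimate would come with the mirrored weights $\mathring{\theta},\mathring{\xi}$ of Theorem \ref{thm4}, which do not match the weights $\theta,\xi$ you need in the Young splittings against $\phi_1$, $\mathbf{b}$, $\phi_2$. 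Second, and more seriously, your absorption of the pairing $\phi_2 Q$ relies on the unknown martingale coefficient $Q$ ``sitting on the LHS'' of a Carleman estimate; no such estimate is available. In Theorem \ref{thm1} the diffusion term is a given source $\phi_2$ appearing on the right, and in the backward Carleman estimates of this paper (Lemma \ref{lem3.0}, Theorem \ref{thm4}) the martingale part $Z$ likewise appears on the right-hand side. The paper controls the martingale part of its dual variable by a separate It\^{o}/energy argument with the inverse weights $\lambda^{-2}\mu^{-2}\xi^{-2}\theta_\epsilon^{-2}$ (see \eqref{7.9}--\eqref{7.12}), not by any Carleman inequality; without such a step your $\phi_2 Q$ term is uncontrolled. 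Finally, the step you yourself flag as the principal obstacle --- transferring the residual observation of $p$ on $\mathcal{O}'_T$ into an observation of $z$ --- is exactly where the real content lies, and your sketch of an auxiliary penalized control problem does not explain how this transfer is achieved; as written it risks circularity (it amounts to an observability statement of the same strength as the estimate being proved).

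The paper arranges the duality so that neither problem arises: the dual object is the \emph{controlled} backward SPDE \eqref{7.2}, whose drift contains the weighted $z$-terms and a control $\mathbf{1}_{\mathcal{O}'}v$, and which is steered to $\tilde r(0)=0$ by penalized HUM (Lemma \ref{lem7.1}). In that construction the Euler--Lagrange dual \eqref{7.5} is a pathwise random \emph{forward} parabolic PDE with no martingale term, to which the forward Carleman estimate (Lemma \ref{thm7.1}, a special case of Theorem \ref{thm1}) legitimately applies; the bound on the martingale part $\tilde R$ comes from the inverse-weight energy estimate mentioned above; and in the duality identity between $z$ and $\tilde r$ the localized control pairs directly against $z$ on $\mathcal{O}'_T$, so the observation term $\mathbb{E}\int_{\mathcal{O}'_T}\lambda^3\mu^4\xi^3\theta^2 z^2$ appears with no transfer step at all. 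Also note that the terminal term $\mathbb{E}\int_{\mathcal{O}}\lambda\mu^2(\xi\theta^2)(T)z^2(T)$ is not obtained through the choice of terminal datum $\eta$ as you propose, but by a separate It\^{o} energy estimate applied to $\lambda\mu^2\xi\theta^2z^2$ (identity \eqref{wo}), added to the duality-based bound \eqref{wooo} at the end. To repair your argument you would essentially have to reproduce this structure: replace the uncontrolled adjoint by the controlled system \eqref{7.2}, prove its null controllability with the weighted estimate \eqref{umd}, and only then run the duality.
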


\begin{remark}
	As far as we are aware, the Carleman estimates \eqref{carleman2} have not been addressed in the literature. Compared with \eqref{carleman1}, the weak derivative of $\phi_2$ is removed from \eqref{carleman2}. Since  $\phi_1+ \nabla\cdot \textbf{b}$ belongs to the Sobolev space $H^{-1}(\mathcal {O})$, the Carleman estimates \eqref{carleman2} cannot be directly obtained by using the identity \eqref{2.9} deduced in the proof of  Theorem \ref{thm1}. Here we shall overcome this difficulty by combining the $L^2$-Carleman estimate in Theorem \ref{thm1} with Lions's HUM method \cite{Lions1988} and a duality argument similar to \cite{baroun2025null,liu2014global}. It should be noted that, owing to the non-singularity of the weight function $\gamma$ at $t=T$, the Carleman estimate in \cite[Theorem 4.1]{baroun2025null} exhibits a slight discrepancy from \eqref{carleman2}, and the main reason lies in the choice of the weight function $\gamma$ (cf. \cite{hernandez2023global}), which deviates from the classical ones used in \cite{baroun2025null,tang2009null}.
\end{remark}

As an application of Theorem \ref{thm2}, let us consider the null controllability of the following backward parabolic SPDEs
\begin{equation}\label{4.1}
	\left\{
	\begin{aligned}
		&\mathrm{d} y+\nabla\cdot (\mathcal {A}\nabla y) \mathrm{d}t= \left(\langle \textbf{a}, \nabla y\rangle+ \alpha y+ \phi  + \nabla\cdot \textbf{b} +\textbf{1}_{\mathcal {O}'}u\right)\mathrm{d}t+Y\mathrm{d}W_t ~\textrm{in}~\mathcal {O}_T,\\
	&	y=0~\textrm{on}~\Sigma_T,\quad
		y(T) =y_T~\textrm{in}~\mathcal {O},
	\end{aligned}
	\right.
\end{equation}
where the pair $(y,Y)$ is the unique solution associated with the control variable $u$ and the terminal state $y_T$. In \eqref{4.1}, we assume that $\textbf{a}\in L_\mathbb{F}^\infty(0,T;W^{1,\infty}(\mathcal {O};\mathbb{R}^n))$, $\alpha \in L_\mathbb{F}^\infty(0,T;L^\infty(\mathcal {O}))$, $ \phi \in L^2_\mathbb{F}(0,T;L^2(\mathcal {O}))$ and $  \textbf{b} \in L^2_\mathbb{F}(0,T;L^2(\mathcal {O};\mathbb{R}^n))$.

\begin{theorem} \label{lem4.1}
	Assume that the condition (A$_1$) holds. Then for each terminal state $y_T \in L^2_{\mathcal{F}_T}(\Omega;L^2(\mathcal {O}))$, there exists a control   $\hat{u}\in L^2_\mathbb{F}(0,T;L^2(\mathcal {O}'))$ such that the corresponding solution $(\hat{y}, \hat{Y})$ to \eqref{4.1} satisfies $
	\hat{y}(0)=0$ in $\mathcal {O}$, $\mathbb{P}$-a.s.
	Moreover, there exists a constant $C>0$ depending on $\mathcal {O}$ and $\mathcal {O}'$ such that
	\begin{equation}\label{4..3}
		\begin{split}
			&\mathbb{E}\int_{\mathcal {O}_T}  \theta^{-2}\big[\hat{y}^2   +  \lambda^{-2} \mu^{-2} \xi^{-3}  (|\nabla\hat{y}|^2   +  \hat{Y}^2) \big] \mathrm{d}x\mathrm{d}t +\mathbb{E}\int_{\mathcal {O}'_T} \lambda^{-3} \mu^{-4} \xi^{-3} \theta ^{-2}\hat{u}^2 \mathrm{d}x\mathrm{d}t\\
			& \leq C \Big[\lambda^{-1}\mu^{-2} e^{4\lambda \mu e^{6\mu (m+1)}-6\mu m}  \|y_T\|_{L_{\mathcal{F}_T}^2(\Omega ; L^2(\mathcal {O}))}^2+ \mathbb{E}\int_{\mathcal {O}_T} \big(\lambda^{-3} \mu^{-4} \xi^{-3} \theta ^{-2}\phi^2 \\
			&  +  \lambda^{-1} \mu^{-2} \xi^{-1} \theta ^{-2}|\textbf{b}|^2\big)\mathrm{d}x\mathrm{d}t\Big],
		\end{split}
	\end{equation}
	for all parameters $\lambda,\mu \geq 1$ sufficiently large.
\end{theorem}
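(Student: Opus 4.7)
The plan is to combine a HUM-type variational argument with the $H^{-1}$-Carleman estimate of Theorem \ref{thm2} applied to the forward adjoint of \eqref{4.1}. First, Itô's formula for $\int_\mathcal{O} yz\,\mathrm{d}x$ with $(y,Y)$ solving \eqref{4.1}, together with integration by parts in $x$ using the Dirichlet conditions, identifies the adjoint as the linear forward SPDE
\begin{equation*}
\mathrm{d}z-\nabla\cdot(\mathcal{A}\nabla z)\mathrm{d}t=\bigl[\langle\textbf{a},\nabla z\rangle+(\nabla\cdot\textbf{a}-\alpha)z\bigr]\mathrm{d}t+\phi_2\,\mathrm{d}W_t,\quad z|_{\partial\mathcal{O}}=0,\quad z(0)=z_0,
\end{equation*}
parametrized by $(z_0,\phi_2)\in L^2_{\mathcal{F}_0}(\Omega;L^2(\mathcal{O}))\times L^2_\mathbb{F}(0,T;L^2(\mathcal{O}))$. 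Since this matches \eqref{2.1} with $\phi_1\equiv 0$ and $\textbf{b}\equiv\textbf{0}$, Theorem \ref{thm2} applies directly and yields an observability-type inequality whose right-hand side involves only $\mathbb{E}\int_{\mathcal{O}'_T}\lambda^3\mu^4\xi^3\theta^2 z^2$ and $\mathbb{E}\int_{\mathcal{O}_T}\lambda^2\mu^2\xi^2\theta^2\phi_2^2$. The same Itô computation produces the duality identity
\begin{equation*}
\mathbb{E}\int_\mathcal{O} y_Tz(T)\,\mathrm{d}x-\mathbb{E}\int_\mathcal{O} y(0)z_0\,\mathrm{d}x=\mathbb{E}\int_{\mathcal{O}_T}\bigl[z\phi-\langle\textbf{b},\nabla z\rangle+z\mathbf{1}_{\mathcal{O}'}u+\phi_2 Y\bigr]\mathrm{d}x\,\mathrm{d}t,
\end{equation*}
recasting $y(0)=0$ as a linear constraint on $(u,Y)$ in duality with the adjoint data $(z_0,\phi_2)$.

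Next, I would introduce the HUM-type functional
\begin{equation*}
\begin{aligned}
J(z_0,\phi_2)=\,&\frac{1}{2}\mathbb{E}\int_{\mathcal{O}'_T}\lambda^3\mu^4\xi^3\theta^2 z^2\,\mathrm{d}x\mathrm{d}t+\frac{1}{2}\mathbb{E}\int_{\mathcal{O}_T}\lambda^2\mu^2\xi^2\theta^2\phi_2^2\,\mathrm{d}x\mathrm{d}t\\
&-\mathbb{E}\int_\mathcal{O} y_T z(T)\,\mathrm{d}x+\mathbb{E}\int_{\mathcal{O}_T}\bigl[z\phi-\langle\textbf{b},\nabla z\rangle\bigr]\mathrm{d}x\mathrm{d}t
\end{aligned}
\end{equation*}
on the Hilbert space obtained by completing $L^2_{\mathcal{F}_0}\times L^2_\mathbb{F}$ in the Carleman quadratic form. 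Coercivity is supplied by Theorem \ref{thm2} and continuity of the linear terms by Cauchy--Schwarz---employing the uniform lower bound of $(\lambda\mu^2\xi\theta^2)(T)$ on $\mathcal{O}$ for the $y_T$ pairing, the dual weight $\lambda^{-3}\mu^{-4}\xi^{-3}\theta^{-2}$ for the $\phi$ pairing, and the dual weight $\lambda^{-1}\mu^{-2}\xi^{-1}\theta^{-2}$ for the $\textbf{b}$ pairing. Lax--Milgram then produces a unique minimizer $(z_0^*,\phi_2^*)$ with associated adjoint $z^*$, from which I define $\hat u=\lambda^3\mu^4\xi^3\theta^2 z^*\mathbf{1}_{\mathcal{O}'}$ and $\hat Y=\lambda^2\mu^2\xi^2\theta^2\phi_2^*$. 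The Euler--Lagrange equation combined with the duality identity forces $\hat y(0)=0$ $\mathbb{P}$-a.s., while $J(z_0^*,\phi_2^*)\le J(0,0)=0$ together with Young's inequality on the linear part bounds $\mathbb{E}\int\lambda^{-3}\mu^{-4}\xi^{-3}\theta^{-2}\hat u^2+\mathbb{E}\int\lambda^{-2}\mu^{-2}\xi^{-2}\theta^{-2}\hat Y^2$ by the right-hand side of \eqref{4..3}; since $\xi\ge 1$ for $\mu$ large, this already implies the weights on $\hat u$ and $\hat Y$ claimed in \eqref{4..3}.

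The remaining LHS bounds of \eqref{4..3} on $\theta^{-1}\hat y$ and $\lambda^{-1}\mu^{-1}\xi^{-3/2}\theta^{-1}|\nabla\hat y|$ follow from a weighted energy estimate for \eqref{4.1}: applying Itô's formula to $\theta^{-2}\hat y^2$, one handles the weight derivatives $\partial_t\theta^{-2}$ and $\nabla\theta^{-2}$ (proportional respectively to $-2\lambda\gamma'\varphi\gamma^{-1}\theta^{-2}$ and $-2\lambda\mu\xi(\nabla\beta)\theta^{-2}$) via the monotonicity of $\gamma$ and the structure of $\xi$, while absorbing source terms using the already-derived bounds on $\hat u,\hat Y,\phi,\textbf{b},y_T$. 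The main technical obstacle is twofold: (i) the sharp matching of Cauchy--Schwarz pairings in the second step---in particular, the coefficient $\lambda^{-1}\mu^{-2}e^{4\lambda\mu e^{6\mu(m+1)}-6\mu m}$ on $\|y_T\|^2$ must be extracted from the uniform lower bound of $(\lambda\mu^2\xi\theta^2)(T)$ on $\mathcal{O}$ (using $\beta\ge 0$ and $\gamma(T)=2$)---and (ii) the bookkeeping of weight derivatives in the final energy estimate, so that no additional powers of $\lambda,\mu,\xi$ accumulate beyond those recorded in \eqref{4..3}.
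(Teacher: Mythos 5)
Your first two steps (duality with a forward adjoint carrying free data $(z_0,\phi_2)$, HUM minimization, reading off $\hat u$ and $\hat Y$ from the Euler--Lagrange relation, and extracting the $y_T$-coefficient from $(\lambda\mu^2\xi\theta^2)(T)$) are coherent and would plausibly deliver $\hat y(0)=0$ together with the weighted bounds on $\hat u$ and $\hat Y$ stated in \eqref{4..3}. But this differs structurally from the paper, which uses a \emph{penalized} HUM functional that also contains the terms $\frac12\mathbb{E}\int\theta_\epsilon^{-2}y^2$ and $\frac12\mathbb{E}\int\lambda^{-2}\mu^{-2}\xi^{-3}\theta_\epsilon^{-2}|\nabla y|^2$; as a consequence the dual equation \eqref{4.7} carries the sources $\theta_\epsilon^{-2}\hat y_\epsilon$ and $\nabla\cdot(\lambda^{-2}\mu^{-2}\xi^{-3}\theta_\epsilon^{-2}\nabla\hat y_\epsilon)$, the $H^{-1}$-Carleman estimate of Theorem \ref{thm2} is applied to that dual equation, and the weighted interior bounds on $\hat y_\epsilon,\nabla\hat y_\epsilon$ drop out of the optimality identity \eqref{4.8}--\eqref{4.11}; the $\hat Y_\epsilon$ bound then follows from It\^o on $\lambda^{-2}\mu^{-2}\xi^{-2}\theta_\epsilon^{-2}\hat y_\epsilon^2$, whose bad term is controlled precisely by the already-established bound on $\mathbb{E}\int\theta_\epsilon^{-2}\hat y_\epsilon^2$. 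In your scheme the adjoint has no source at all, so Theorem \ref{thm2}'s $H^{-1}$ feature is never used --- a sign that the mechanism producing the state estimates has been dropped.

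The genuine gap is your final step. The bounds on $\mathbb{E}\int\theta^{-2}\hat y^2$ and $\mathbb{E}\int\lambda^{-2}\mu^{-2}\xi^{-3}\theta^{-2}|\nabla\hat y|^2$ cannot be recovered \emph{a posteriori} by applying It\^o to $\theta^{-2}\hat y^2$ (or to $\xi^{-k}\theta^{-2}\hat y^2$). On $(0,T/2]$ the function $\gamma$ is \emph{decreasing} and singular, so $\partial_t\theta^{-2}=-2\lambda\varphi_t\theta^{-2}$ has the unfavorable sign there and size of order $\lambda|\gamma_t|\,\mu e^{6\mu(m+1)}\theta^{-2}$ with $|\gamma_t|\sim t^{-m-1}$ non-integrable near $t=0$; the monotonicity you invoke only helps on $[3T/4,T]$. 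There is no term on the left of the energy identity carrying a comparably large factor to absorb this (the gradient term has no large prefactor), and a Gronwall argument exponentiates $\int_0^{T/2}\lambda\mu e^{6\mu(m+1)}s^{-m-1}\,\mathrm{d}s=\infty$, i.e.\ it exactly undoes the weight; the qualitative fact $\hat y(0)=0$ gives no decay rate. If instead you insert an extra $\xi^{-2}$ or $\xi^{-3}$ into the weight so that $\partial_t$ of the weight becomes controllable (this is what the paper does for $\hat Y_\epsilon$), the term it generates is $C\,\mathbb{E}\int\theta^{-2}\hat y^2$ --- exactly the quantity that in the paper is supplied by the penalization via \eqref{4.11}, and which your construction does not provide. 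So to close the proof you must either add the weighted state and gradient penalizations to your functional (whereupon the adjoint acquires $L^2$- and divergence-form sources and Theorem \ref{thm2} is needed, recovering the paper's argument), or find another Carleman-type estimate applied directly to $\hat y$; the proposed plain weighted energy estimate does not work.
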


\begin{remark}
	Theorem \ref{lem4.1} may be regarded as a stochastic version of the null controllability results obtained in \cite[Lemma 3.1]{imanuvilov2003carleman} and \cite[Lemma 2.1]{Fern2006Global}.  Another novelty is the estimate \eqref{4..3}, which provides an uniform bound for the quadruple $(\hat{y}, \nabla \hat{y},\hat{Y},\hat{u})$ in suitable weighted Sobolev spaces. As we shall see later, \eqref{4..3} plays an important role in defining a contraction mapping $\mathscr{K}$ (see \eqref{fff}) in a suitable weighted Banach space, which enables us to extend the linear null controllability in Theorem \ref{lem4.1} to the case of semi-linear SPDEs.
\end{remark}

\begin{theorem} \label{thm-nonlinear1}
	Assume that the conditions (A$_1$)-(A$_2$) hold. Then for any $y_T \in L^2_{\mathcal{F}_T}(\Omega;L^2(\mathcal {O}))$, there exists a control variable $u\in L^2_\mathbb{F}(0,T;L^2(\mathcal {O}'))$ such that the associated unique solution $(y, Y)$ to the controlled system
	\begin{equation}\label{jj6}
		\left\{
		\begin{aligned}
			&\mathrm{d} y+\nabla\cdot (\mathcal {A}\nabla y) \mathrm{d}t= \left(F(\omega,t,x,y,\nabla y,Y) +\textbf{1}_{\mathcal {O}'}u\right)\mathrm{d}t+Y\mathrm{d}W_t ~ \textrm{in}~~\mathcal {O}_T,\\
			&y=0~ \textrm{on}~\Sigma_T,\quad
			y(T) =y_T~~\textrm{in}~\mathcal {O}
		\end{aligned}
		\right.
	\end{equation}
	satisfies
	$  y(0)=0$ in $\mathcal {O}$, $\mathbb{P}$-a.s.
\end{theorem}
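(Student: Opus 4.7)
The plan is to prove Theorem \ref{thm-nonlinear1} by applying the Banach fixed-point theorem in a weighted Banach space tailored to the linear estimate \eqref{4..3}. Fix $\lambda,\mu\geq 1$ (to be chosen large at the end of the argument) and introduce
\[
\mathcal{H}:=\Bigl\{(y,Y)\colon \|(y,Y)\|_{\mathcal{H}}^{2}:=\mathbb{E}\!\int_{\mathcal{O}_{T}}\theta^{-2}\bigl[y^{2}+\lambda^{-2}\mu^{-2}\xi^{-3}(|\nabla y|^{2}+Y^{2})\bigr]\mathrm{d}x\,\mathrm{d}t<\infty\Bigr\},
\]
whose two weights match the left-hand side of \eqref{4..3}. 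Since $\xi\geq e^{6m\mu}$ on $\mathcal{O}_{T}$ and $\varphi$ is bounded above by a strictly negative quantity of order $-\mu e^{6\mu(m+1)}$, both $\theta^{-2}$ and $\lambda^{-2}\mu^{-2}\xi^{-3}\theta^{-2}$ admit strictly positive uniform lower bounds (depending on $\lambda,\mu$); consequently membership in $\mathcal{H}$ forces $(y,\nabla y,Y)\in L^{2}_{\mathbb{F}}(0,T;L^{2}(\mathcal{O}))\times L^{2}_{\mathbb{F}}(0,T;L^{2}(\mathcal{O};\mathbb{R}^{n}))\times L^{2}_{\mathbb{F}}(0,T;L^{2}(\mathcal{O}))$, and in particular $F(\cdot,\tilde y,\nabla\tilde y,\tilde Y)\in L^{2}_{\mathbb{F}}(0,T;L^{2}(\mathcal{O}))$ for every $(\tilde y,\tilde Y)\in\mathcal{H}$ by (A$_{2}$).

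Given $(\tilde y,\tilde Y)\in\mathcal{H}$, I apply Theorem \ref{lem4.1} to \eqref{4.1} with $\mathbf{a}\equiv\mathbf{0}$, $\alpha\equiv 0$, $\mathbf{b}\equiv\mathbf{0}$, terminal datum $y_{T}$, and source $\phi:=F(\cdot,\tilde y,\nabla\tilde y,\tilde Y)$, obtaining a control $\hat u\in L^{2}_{\mathbb{F}}(0,T;L^{2}(\mathcal{O}'))$ and the associated pair $(\hat y,\hat Y)$ with $\hat y(0)=0$ almost surely and satisfying \eqref{4..3}. Set $\mathscr{K}(\tilde y,\tilde Y):=(\hat y,\hat Y)$; the finiteness of the right-hand side of \eqref{4..3}, together with $|F|^{2}\leq 3L^{2}(\tilde y^{2}+|\nabla\tilde y|^{2}+\tilde Y^{2})$, shows that $\mathscr{K}:\mathcal{H}\to\mathcal{H}$ is well-defined. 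For contraction, take two inputs $(\tilde y_{j},\tilde Y_{j})$, $j=1,2$, and observe that by linearity $(\hat y_{1}-\hat y_{2},\hat Y_{1}-\hat Y_{2})$ is itself a solution of \eqref{4.1} with zero terminal datum, source $F(\cdot,\tilde y_{1},\nabla\tilde y_{1},\tilde Y_{1})-F(\cdot,\tilde y_{2},\nabla\tilde y_{2},\tilde Y_{2})$, and control $\hat u_{1}-\hat u_{2}$. Applying \eqref{4..3} to this difference and using the Lipschitz bound from (A$_{2}$) yields
\[
\|\mathscr{K}(\tilde y_{1},\tilde Y_{1})-\mathscr{K}(\tilde y_{2},\tilde Y_{2})\|_{\mathcal{H}}^{2}\leq CL^{2}\mathbb{E}\!\int_{\mathcal{O}_{T}}\lambda^{-3}\mu^{-4}\xi^{-3}\theta^{-2}\bigl[|\tilde y_{1}-\tilde y_{2}|^{2}+|\nabla(\tilde y_{1}-\tilde y_{2})|^{2}+|\tilde Y_{1}-\tilde Y_{2}|^{2}\bigr]\mathrm{d}x\,\mathrm{d}t.
\]
Comparing the integrand term by term with that of $\|(\tilde y_{1}-\tilde y_{2},\tilde Y_{1}-\tilde Y_{2})\|_{\mathcal{H}}^{2}$ gives the weight ratios $\lambda^{-3}\mu^{-4}\xi^{-3}\leq\lambda^{-3}\mu^{-4}e^{-18m\mu}$ on the $y$-component and $\lambda^{-1}\mu^{-2}$ on the $\nabla y$ and $Y$ components; choosing $\lambda,\mu$ sufficiently large turns $\mathscr{K}$ into a strict contraction, and the Banach fixed-point theorem produces $(y,Y)\in\mathcal{H}$ with $\mathscr{K}(y,Y)=(y,Y)$. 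The control corresponding to this fixed point then drives \eqref{jj6} from $y_{T}$ to $y(0)=0$ almost surely.

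The main obstacle is to arrange for the single weight $\lambda^{-3}\mu^{-4}\xi^{-3}\theta^{-2}$ attached to the source in \eqref{4..3} to dominate both the $y$-weight $\theta^{-2}$ and the $(\nabla y,Y)$-weight $\lambda^{-2}\mu^{-2}\xi^{-3}\theta^{-2}$ simultaneously, with small constants that can absorb the Lipschitz constant $L$. This hinges entirely on the extra prefactor $\lambda^{-2}\mu^{-2}\xi^{-3}$ on the gradient and martingale components built into \eqref{4..3}, which is itself a consequence of the $H^{-1}$-type Carleman estimate \eqref{carleman2} and would be unavailable through the $L^{2}$-Carleman estimate \eqref{carleman1} alone. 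This is precisely why the chain Theorem \ref{thm1}$\to$Theorem \ref{thm2}$\to$Theorem \ref{lem4.1} is built in the preceding sections, and it is also why the whole scheme succeeds without invoking Aubin--Lions-type compactness unavailable in the stochastic framework.
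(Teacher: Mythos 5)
Your proposal is correct and follows essentially the same route as the paper: both rest on the null controllability and weighted estimate \eqref{4..3} from Theorem \ref{lem4.1}, the Lipschitz condition (A$_2$), and a Banach fixed-point argument in which $\lambda,\mu$ are taken large so that the weight ratios $\lambda^{-1}\mu^{-2}$ and $\lambda^{-3}\mu^{-4}\xi^{-3}$ yield a strict contraction. The only cosmetic difference is that you iterate on the solution pair $(y,Y)$ in a space modeled on the left-hand side of \eqref{4..3}, whereas the paper iterates on the source term $\varphi\mapsto F(\omega,t,x,y,\nabla y,Y)$ in the weighted space $\mathscr{B}_{\lambda,\mu}$; the two formulations are equivalent and rely on the same implicit linearity of the control-solution operator when applying \eqref{4..3} to differences.
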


\begin{remark} \label{rem22}
Recently, Baroun et al. \cite{baroun2025null} studied the null controllability for linear parabolic SPDEs,  Theorem \ref{thm-nonlinear1}  extends their result (cf. \cite[Theorem 1.2]{baroun2025null}) to nonlinear settings, which is a non-trivial task as it necessitates the derivation of new Carleman estimates to overcome the difficulty caused by the lack of compact embedding in state spaces. Due to technique reasons, it still remains to be open to consider the controlled system with general nonlinearities, and we refer to \cite{Fern2006Exact,Fern2006Global,Kassab2020Null} for results concerning the deterministic PDEs with super-linear nonlinearities.
\end{remark}

%As is well-known that the forward stochastic system has an important structural distinction with the backward stochastic system (cf. \cite{Yong1999}). Namely, in order to ensure that the solutions to the backward stochastic system are adapted to the filtration, one has to add a new process (a part of the solution) on the diffusion coefficients. Therefore, i
Another closely related issue is to study the null controllability of semi-linear forward parabolic SPDEs. To do so, let us consider the backward SPDEs 
\begin{equation} \label{back}
	\left\{
	\begin{aligned}
		&\mathrm{d} z+\nabla\cdot (\mathcal {A}\nabla z) \mathrm{d}t= \left(\langle \textbf{c}, \nabla z\rangle+ \rho_1 z+ \rho_2Z+ \phi + \nabla\cdot \textbf{b} \right)\mathrm{d}t+Z\mathrm{d}W_t ~ \textrm{in}~\mathcal {O}_T,\\
		&z=0 ~ \textrm{on}~\Sigma_T,\quad
		z(T) =z_T ~ \textrm{in}~\mathcal {O},
	\end{aligned}
	\right.
\end{equation}
where $(z,Z)$ denotes the solution associated with   terminal data $z_T$. For the parameters in \eqref{back}, we assume that $\textbf{c}\in L_\mathbb{F}^\infty(0,T;L^\infty(\mathcal {O};\mathbb{R}^n))$, $\rho_1$, $\rho_2\in L_\mathbb{F}^\infty(0,T;L^\infty(\mathcal {O}))$, $\phi  \in L_\mathbb{F}^2(0,T;L^2(\mathcal {O}))$ and
$\textbf{b} \in L^2_\mathbb{F}( 0,T;L^2(\mathcal {O};\mathbb{R}^n))$.

%Our first result is the Carleman estimates for  \eqref{back}  with $H^{-1}$-valued source terms.

\begin{theorem} \label{thm4}
	Assume that the assumption (A$_1$) holds, then there exist $\lambda_0>0$ and $\mu_0>0$,  depending only on $\mathcal {O},\mathcal {O}'$ and $T$,  such that the unique solution $(z,Z) \in \mathcal {W}_T \times L^2_\mathbb{F}(0,T;L^2(\mathcal {O}))$ of \eqref{back} with respect to $z_T\in L^2_{\mathcal {F}_T}(\Omega; L^2(\mathcal {O}))$ satisfies
	\begin{equation} \label{carleman3}
		\begin{split}
			&\mathbb{E}\int_\mathcal {O} \lambda \mu^2 e^{6\mu m} e^{2\lambda\varphi(0)}  z^2(0) \mathrm{d}x+\mathbb{E}\int_{\mathcal {O}_T} \mathring{\theta}^2\big(\lambda^3 \mu^4 \mathring{\xi}^3 z^2 +  \lambda \mu^2 \mathring{\xi}  |\nabla z|^2\big)\mathrm{d}x\mathrm{d}t\\
			&  \leq C\Big[\mathbb{E}\int_{\mathcal {O}'_T}\lambda ^{3}\mu ^{4}\mathring{\theta}^{2} \mathring{\xi}^{3}z^2 \mathrm{d}x \mathrm{d}t  +   \mathbb{E} \int_{\mathcal {O}_T} \mathring{\theta}^{2}\big[\phi^2  +\lambda^{2} \mu^{2} \mathring{\xi}^{3}  (Z^2 +  |\textbf{b}|^2) \big]  \mathrm{d}x \mathrm{d}t\Big],
		\end{split}
	\end{equation}
	for all $\lambda \geq \lambda_0$ and $\mu \geq \mu_0$, where the $\mathcal {C}^2$ weighted function $\mathring{\gamma}(t)$ is now defined by
	\begin{equation*} 
		\mathring{\gamma}(t)=\left\{
		\begin{aligned}
			&1+(1- 4T^{-1}t)^\sigma  && \textrm{in}~ {[0,T/4]},\\
			&1  && \textrm{in}~{[T/4,T/2]},\\
			&\textrm{is increasing}  && \textrm{in}~{[T/2,3T/4 ]},\\
			&(T-t)^{-m}  &&  \textrm{in}~{[3T/4 ,T)},
		\end{aligned}
		\right.
	\end{equation*}
	while
	$
	\mathring{\ell} $, $\mathring{\xi} $ and $\mathring{\theta} $ are defined
	by replacing $\gamma $ with $\mathring{\gamma} $ in \eqref{2.4}, respectively.
\end{theorem}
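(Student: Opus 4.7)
The plan is to mirror the two-step strategy employed in Theorems~\ref{thm1} and~\ref{thm2}, now applied to the backward SPDE~\eqref{back} with the time-reversed weight built from $\mathring{\gamma}$. First, I would establish an $L^2$-type Carleman estimate for~\eqref{back} in the special case $\textbf{b}\equiv\textbf{0}$, and then bootstrap to the full estimate~\eqref{carleman3} by a penalized HUM and duality argument analogous to the one used to deduce Theorem~\ref{thm2} from Theorem~\ref{thm1}.

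For the first step, the pointwise weighted identity derived in the proof of Theorem~\ref{thm1} can be adapted to the backward stochastic parabolic operator $\mathrm{d}z+\nabla\cdot(\mathcal{A}\nabla z)\mathrm{d}t$. The sign of the time-derivative is reversed, so after applying It\^o's formula to $\mathring{\theta}^2 z^2$ and $\mathring{\theta}^2|\nabla z|^2$ and integrating by parts in time, the temporal boundary contribution appears at $t=0$ rather than at $t=T$. Since $\mathring{\gamma}$ is bounded at $t=0$ and singular at $t=T$, this is precisely the mechanism that produces the first term on the left-hand side of~\eqref{carleman3}, namely $\lambda\mu^2 e^{6\mu m} e^{2\lambda\varphi(0)}z^2(0)$. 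The first-order term $\langle\textbf{c},\nabla z\rangle$ and the zeroth-order term $\rho_1 z$ are handled by weighted Cauchy--Schwarz and, once $\lambda$ and $\mu$ are taken large, are absorbed into the dominant left-hand side quantities $\lambda\mu^2\mathring{\xi}\mathring{\theta}^2|\nabla z|^2$ and $\lambda^3\mu^4\mathring{\xi}^3\mathring{\theta}^2 z^2$.

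The genuinely new feature compared with Theorems~\ref{thm1}--\ref{thm2} is the coupling term $\rho_2 Z$ in the drift, which mixes the state $z$ with the martingale component $Z$ of the backward SPDE. Since $\rho_2$ is bounded, a suitably scaled Cauchy--Schwarz inequality splits this term between the dominant left-hand side quantity in $z$ and a weighted $\mathring{\theta}^2 Z^2$ piece that is moved to the right. Combined with the cross terms generated when It\^o's formula acts on $\mathring{\theta}^2|\nabla z|^2$, this produces precisely the $\lambda^2\mu^2\mathring{\xi}^3\mathring{\theta}^2 Z^2$ contribution appearing on the right of~\eqref{carleman3}; in effect, the martingale part $Z$ is treated as a forcing rather than as an unknown at the Carleman level.

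For the second step, I would convert the $L^2$-estimate derived above into~\eqref{carleman3} with source $\phi+\nabla\cdot\textbf{b}$ by applying the same penalized HUM/duality argument used in the proof of Theorem~\ref{thm2}, with the forward/backward roles interchanged: introduce an auxiliary controlled forward SPDE observed through $z$, penalize its terminal cost by $\varepsilon^{-1}\|\cdot\|^2$, apply the backward $L^2$-Carleman estimate to the corresponding costate, and pass to the limit $\varepsilon\to 0^+$. The main obstacle will be the bookkeeping of weight powers: one must verify simultaneously that absorbing the $\rho_2 Z$ coupling, dualizing the $\nabla\cdot\textbf{b}$ source, and extracting the $t=0$ boundary term from the time-reversed weight $\mathring{\gamma}$ all produce exactly the exponents displayed in~\eqref{carleman3}, and that no spurious $|\nabla Z|^2$ contribution survives on the right-hand side after the duality. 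I also expect that, just as the non-singularity of $\gamma$ at $t=T$ was crucial in Theorem~\ref{thm2}, the non-singularity of $\mathring{\gamma}$ at $t=0$ is essential here to guarantee that the $z(0)$ term on the left carries a bounded, rather than degenerate, weight.
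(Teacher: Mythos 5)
Your proposal follows essentially the same route as the paper: first an $L^2$-Carleman estimate for \eqref{back} with $\textbf{b}\equiv\textbf{0}$ (the paper's Lemma \ref{lem3.0}, obtained by adapting the weighted identity of Theorem \ref{thm1} to the backward operator, with $Z$ treated as a forcing), then a penalized-HUM/duality step built on an auxiliary controlled forward SPDE (the paper's \eqref{3.1} and Lemma \ref{lem3.1}, whose drift and diffusion carry the weighted sources $\lambda^3\mu^4\mathring{\xi}^3\mathring{\theta}^2 z$ and $\lambda^2\mu^2\mathring{\xi}^3\mathring{\theta}^2 Z$ together with an extra diffusion control $U$), concluded by the It\^{o}/duality pairing of $\tilde{y}$ with $z$ and a supplementary energy estimate on $\lambda\mu^2\mathring{\xi}\mathring{\theta}^2 z^2$ that extracts the $z^2(0)$ term from the non-singularity of $\mathring{\gamma}$ at $t=0$. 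This coincides with the paper's proof in structure and in the treatment of the $\rho_2 Z$ coupling and the $\nabla\cdot\textbf{b}$ source.
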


\begin{remark}
	Notice that the exponent of the weighted function $\mathring{\xi}$ in \eqref{carleman3} is cubic rather than  a quadratic one as that in \cite[Theorem 6.1]{tang2009null} (see also \cite[Theorem 3.1]{baroun2025null}), which was caused by the non-degeneracy of weighted function at $t=0$. Theorem \ref{thm4} improves the Carleman estimates in \cite{tang2009null} by considering a source term in Sobolev space of negative order. It will be of interest to extend the Carleman estimate \eqref{carleman3} to the stochastic fourth order parabolic system considered in \cite[Theorem 1.8]{wangyu2022}; see also \cite[Proposition 2.4]{Kassab2020Null} for recent deterministic results.
\end{remark}

\begin{theorem} \label{thm-nonlinear2}
	Assume that the conditions (A$_1$) and (A$_3$) hold. Then, for each initial state $y_0 \in L^2_{\mathcal {F}_0}(\Omega;L^2(\mathcal {O}))$, there exists a control pair $(u,U)\in L^2_\mathbb{F}(0,T;L^2(\mathcal {O}'))\times L^2_\mathbb{F}(0,T;L^2(\mathcal {O}))$ such that the unique solution $y$ to the system
	\begin{equation}\label{jj8}
		\left\{
		\begin{aligned}
			&\mathrm{d} y-\nabla\cdot (\mathcal {A}\nabla y) \mathrm{d}t= \left(F_1(\omega,t,x,y,\nabla y) +\textbf{1}_{\mathcal {O}'}u\right)\mathrm{d}t \\
			&\quad\quad\quad\quad\quad\quad  \quad~~+\left(F_2(\omega,t,x,y,\nabla y)+U\right)\mathrm{d}W_t ~~\textrm{in}~\mathcal {O}_T,\\ &y=0~~\textrm{on}~\Sigma_T,\quad
			y(0)=y_0~~\textrm{in}~\mathcal {O}
		\end{aligned}
		\right.
	\end{equation}
	satisfies $y(T)=0 $ in $\mathcal {O}$, $\mathbb{P}$-a.s.
\end{theorem}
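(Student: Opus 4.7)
The plan is to mirror the argument used for Theorem~\ref{thm-nonlinear1}, interchanging the roles of the forward and backward SPDEs and invoking the backward $H^{-1}$-Carleman estimate of Theorem~\ref{thm4} as the driving ingredient. The first step is to establish a forward analog of Theorem~\ref{lem4.1}: given sources $f_1,f_2\in L^2_{\mathbb{F}}(0,T;L^2(\mathcal{O}))$, prove the null controllability of
\begin{equation*}
\mathrm{d}y - \nabla\cdot(\mathcal{A}\nabla y)\mathrm{d}t = (f_1 + \mathbf{1}_{\mathcal{O}'}u)\mathrm{d}t + (f_2+U)\mathrm{d}W_t, \qquad y(0)=y_0,
\end{equation*}
together with a weighted energy bound
\begin{equation*}
\begin{split}
&\mathbb{E}\!\int_{\mathcal{O}_T}\!\mathring{\theta}^{-2}\Big[y^2 + \lambda^{-2}\mu^{-2}\mathring{\xi}^{-3}(|\nabla y|^2 + U^2)\Big]\mathrm{d}x\mathrm{d}t + \mathbb{E}\!\int_{\mathcal{O}'_T}\!\lambda^{-3}\mu^{-4}\mathring{\xi}^{-3}\mathring{\theta}^{-2}u^2\mathrm{d}x\mathrm{d}t\\
&\leq C\Big[\mathscr{E}(\lambda,\mu)\|y_0\|_{L^2}^2+\mathbb{E}\int_{\mathcal{O}_T}\mathring{\theta}^{-2}\big(\lambda^{-3}\mu^{-4}\mathring{\xi}^{-3}f_1^2+\lambda^{-2}\mu^{-2}\mathring{\xi}^{-3}f_2^2\big)\mathrm{d}x\mathrm{d}t\Big],
\end{split}
\end{equation*}
in the same spirit as \eqref{4..3}. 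This is obtained by pairing the equation with the backward adjoint (with terminal datum $z_T$) via It\^o's formula and then implementing Lions's penalized HUM procedure; the observability inequality required for the minimization is extracted from Theorem~\ref{thm4} after suitably absorbing the $Z$-type remainder on the right-hand side.

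With this linear result in hand, I would define the weighted Banach space
\begin{equation*}
\mathcal{Z}:=\Big\{\tilde{y}\in L^2_{\mathbb{F}}(0,T;H^1_0(\mathcal{O})):\|\tilde{y}\|_{\mathcal{Z}}^2:=\mathbb{E}\int_{\mathcal{O}_T}\mathring{\theta}^{-2}\big(\tilde{y}^2+\lambda^{-2}\mu^{-2}\mathring{\xi}^{-3}|\nabla\tilde{y}|^2\big)\mathrm{d}x\mathrm{d}t<\infty\Big\},
\end{equation*}
and introduce the map $\mathscr{K}:\mathcal{Z}\to\mathcal{Z}$ sending $\tilde{y}$ to the controlled trajectory $y$ from Step~1 associated with the sources $f_i:=F_i(\cdot,\tilde{y},\nabla\tilde{y})$ and the HUM-minimum-norm control pair $(u,U)$ (linearity of HUM guarantees single-valuedness up to the affine shift by $y_0$). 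By the Lipschitz hypothesis (A$_3$) we have $|f_i|^2\leq 2L_i^2(|\tilde{y}|^2+|\nabla\tilde{y}|^2)$, so the source contribution on the right-hand side of the weighted bound is controlled by $C(L_1+L_2)^2(\lambda\mu)^{-2}\|\tilde{y}\|_{\mathcal{Z}}^2$ with constants independent of the Carleman parameters. Applying the same estimate to $y_1-y_2=\mathscr{K}(\tilde{y}_1)-\mathscr{K}(\tilde{y}_2)$, which solves the analogous linear system with zero initial datum and source difference $F_i(\cdot,\tilde{y}_1,\nabla\tilde{y}_1)-F_i(\cdot,\tilde{y}_2,\nabla\tilde{y}_2)$, yields
\begin{equation*}
\|\mathscr{K}(\tilde{y}_1)-\mathscr{K}(\tilde{y}_2)\|_{\mathcal{Z}}\leq \frac{C(L_1+L_2)}{\lambda\mu}\|\tilde{y}_1-\tilde{y}_2\|_{\mathcal{Z}},
\end{equation*}
which is a strict contraction for $\lambda,\mu$ sufficiently large. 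The Banach fixed-point theorem then delivers $y^*=\mathscr{K}(y^*)$ along with $(u^*,U^*)$ solving \eqref{jj8} and satisfying $y^*(T)=0$, $\mathbb{P}$-a.s.

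The main obstacle I expect is in Step~1, namely proving the forward linear null controllability with the precise weighted bound above when an $f_2\mathrm{d}W_t$ source is present. Because $f_2$ appears inside the stochastic integral, the HUM duality identity acquires an extra pairing term $\mathbb{E}\int_{\mathcal{O}_T}f_2Z\,\mathrm{d}x\mathrm{d}t$, which can only be absorbed thanks to the $\mathring{\xi}^3$-weighted control of $Z$ delivered by Theorem~\ref{thm4}; the classical quadratic weight in \cite{tang2009null} would not suffice. A closely related subtlety is that the gradient dependence of $F_2$ forces us to produce the same weight $\lambda^{-2}\mu^{-2}\mathring{\xi}^{-3}$ on $|\nabla y|^2$ in Step~1, which is the exact point where the $H^{-1}$-type Carleman estimate of Theorem~\ref{thm4} is indispensable and where the deterministic compactness arguments are replaced by purely quantitative smallness coming from the Carleman parameters, in accordance with Remark~\ref{rem22}.
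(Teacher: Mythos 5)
Your overall architecture (a forward linear null-controllability result with a weighted estimate obtained by penalized HUM from Theorem \ref{thm4}, followed by a Banach fixed point) matches the paper's, and your Step 1 is essentially the paper's Lemma \ref{thm5}: the extra pairing $\mathbb{E}\int_{\mathcal{O}_T} f_2 Z\,\mathrm{d}x\mathrm{d}t$ is indeed harmless, though note it is absorbed by the term $\mathbb{E}\int_{\mathcal{O}_T}\lambda^{-2}\mu^{-2}\mathring{\xi}^{-3}\mathring{\theta}^{-2}U_\epsilon^2\,\mathrm{d}x\mathrm{d}t$ produced by the globally supported diffusion control in the optimality system, not by Theorem \ref{thm4} itself, where the $\lambda^{2}\mu^{2}\mathring{\xi}^{3}Z^2$ term sits on the right-hand side. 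The genuine gap is in your contraction estimate. Keeping $F_2$ inside the fixed point, the $f_2$-source in your Step-1 bound carries the weight $\lambda^{-2}\mu^{-2}\mathring{\xi}^{-3}\mathring{\theta}^{-2}$, which is \emph{exactly} the weight of the gradient term in your norm $\|\cdot\|_{\mathcal{Z}}$. Hence the coupling $f_2=F_2(\cdot,\tilde y,\nabla\tilde y)$ contributes $C L_2^2\,\mathbb{E}\int_{\mathcal{O}_T}\lambda^{-2}\mu^{-2}\mathring{\xi}^{-3}\mathring{\theta}^{-2}|\nabla(\tilde y_1-\tilde y_2)|^2\,\mathrm{d}x\mathrm{d}t\le C L_2^2\|\tilde y_1-\tilde y_2\|_{\mathcal{Z}}^2$, a constant that does not decay as $\lambda,\mu\to\infty$; your claimed Lipschitz constant $C(L_1+L_2)/(\lambda\mu)$ is therefore wrong for this term, and $\mathscr{K}$ is a contraction only if $L_2$ is small, which (A$_3$) does not assume. (For $F_1$ the weights do mismatch by $\lambda^{-1}\mu^{-2}$, so that part of your estimate is fine.)

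The paper removes this obstruction before any fixed point is run: since the diffusion control $U$ acts on all of $\mathcal{O}$ and $F_2$ enters additively in the diffusion, one solves the problem with $F_2\equiv 0$ and afterwards replaces the control by $U^{*}=U-F_2(\omega,t,x,y,\nabla y)$, which is admissible because $y\in L^2_\mathbb{F}(0,T;H^1_0(\mathcal{O}))$ and (A$_3$) holds (see the remark following Theorem \ref{thm-nonlinear2}). The fixed point is then taken only in the drift source, $\mathscr{J}:\phi\mapsto F_1(\omega,t,x,y,\nabla y)$ on $\mathscr{D}_{\lambda,\mu}$, where the drift-source weight $\lambda^{-3}\mu^{-4}\mathring{\xi}^{-3}\mathring{\theta}^{-2}$ in \eqref{rb} beats the state weights by $\lambda^{-1}\mu^{-2}$, giving the contraction for $\lambda,\mu$ large. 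To repair your argument, either adopt this reduction, or redesign the HUM weights so that the penalty on $f_2$ is strictly weaker (by a negative power of $\lambda$ or $\mu$) than the gradient weight appearing on the left-hand side of your Step-1 estimate; as written, no such gain is available.
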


\begin{remark}
	The controllability for \eqref{jj8} with general nonlinearities $F_1(\cdot)$ and $F_2(\cdot)$, such as the super-linear nonlinearity considered for deterministic parabolic PDEs \cite{Fern2006Exact,Fern2006Global,Kassab2020Null}, is still an interesting but challenging problem. Moreover, Theorem \ref{thm-nonlinear2} improves \cite[Theorem 1.1]{baroun2025null} to the nonlinear case.
\end{remark}

\begin{remark}
	As the control $U$ acts on the whole domain $\mathcal {O}$, the state $y$ still satisfies the controllability property with the control pair
	$(u,U^*)$, where $U^*= U-F_2(\omega,t,x,y,\nabla y) \in L^2_\mathbb{F}(0,T;L^2(\mathcal {O})).$ Note that the control $U^*$ is well-defined according to the condition (A$_3$) and the fact of $y\in L^2_\mathbb{F}(0,T;H^1_0(\mathcal {O}))$. Therefore, the proof of Theorem \ref{thm-nonlinear2} reduces to the case of $F_2(\cdot)\equiv 0$.
\end{remark}

\begin{remark}
	Theorem \ref{thm-nonlinear2} requires an extra control $U \in L^2_\mathbb{F}(0,T;L^2(\mathcal {O}))$ on the diffusion term, which is non-trivial due to the randomness of the coefficients $\mathcal {A}$, $F_1(\cdot)$ and  $F_2(\cdot)$. An open question is that whether system \eqref{jj8} is still null controllable without the control variable $U$ or if the control $U$ acts only on a sub-domain of $\mathcal {O}$.
\end{remark}
 
\subsection{Organization of the paper}
In Section 2, we shall establish the global Carleman estimates for the linear forward parabolic SPDEs with $L^2$-valued source terms, i.e, Theorem \ref{thm1}. Section 3 is devoted to the proof of the Carleman estimates for linear forward parabolic SPDEs with $H^{-1}$-valued source terms (i.e., Theorem \ref{thm2}), which is then applied to prove the null controllability for linear and semi-linear backward parabolic SPDEs in the Theorem \ref{lem4.1} and Theorem \ref{thm-nonlinear1}, respectively. In Section 4, we provide the sketch of the proof for  Theorem  \ref{thm4} and Theorem \ref{thm-nonlinear2}.

\section{An improved $L^2$-Carleman estimates}

\begin{proof}[Proof of Theorem \ref{thm1}] The proof is based on the method developed in \cite{tang2009null} and the techniques in \cite{hernandez2023global}; however, two differences require highlighting. First,  the random coefficients $ (a^{ij}(\omega,t,x))_{1\leq i,j\leq n}$ introduce additional terms that involve derivatives with respect to $t$- and $x$-variables, which indicates that the Carleman estimates developed for the Laplacian operator \cite{hernandez2023global} cannot be directly applied to the problem under consideration. Second, instead of the classical weighted function used in \cite{baroun2025null}, we have adopted a novel weighted function introduced in \cite{hernandez2023global}; see \cite{badra2016local} for the deterministic case, which renders \cite[Lemma 2.2]{baroun2025null} to be inapplicable to current case. 
	
\emph{Step 1.} Recall that $\theta = e^\ell$ and $\ell=\lambda \varphi$ in \eqref{2.4}, where $\varphi$ is defined in Subsection \ref{mainresult}.
	%Since $\lim_{t\rightarrow T^-}\gamma (t,\cdot)=\infty$, we have
	%$\lim_{t\rightarrow T^-}\varphi (t,\cdot)= -\infty$ and
	%$\lim_{t\rightarrow T^-}h (t,\cdot)= 0$.
	%It is not difficult to see that $\ell_{x_i} =\lambda \varphi_{x_i} =\lambda\mu\beta _{x_i}\xi$ and $\ell_{x_ix_j} = \lambda\mu \beta _{x_ix_j}\xi+ \lambda\mu^2 \beta _{x_i}\beta _{x_j}\xi$.
	%From the definition of $\gamma $ and $\sigma$, there hold
	%\begin{equation}
	%\begin{split}
	%&1\leq \gamma (t) \leq 2,\quad \gamma_{t} \leq 0 \quad \textrm{and} \quad  |\gamma_{tt}| \leq \frac{16}{T^2} \lambda^2\mu^4e^{2\mu(6m-4)} \quad \textrm{on}~  (0,T/4),
	% \end{split}
%\end{equation}
%and there exists a $C>0$ such that
%\begin{equation}
%\begin{split}
%|\gamma_{t}|&\leq C \gamma^2,\quad  |\gamma_{tt}| \leq C \gamma^3  \quad \textrm{on}~ (T/2,T).
% \end{split}
%\end{equation}
Setting $h=\theta z$, we have
%\begin{equation}\label{2.6}
%\begin{split}
$\theta (\mathrm{d} z -\sum_{i,j} (a^{ij}z_{x_i})_{x_j}\mathrm{d}t )= I_1+I_2 \mathrm{d}t$,
%\end{split}
%\end{equation}
where
$
I_1= \mathrm{d} h + 2  \sum_{i,j}a^{ij}\ell_{x_i}h_{x_j} \mathrm{d}t +2 \sum_{i,j}a^{ij}\ell_{x_ix_j} h \mathrm{d} t$, $
I_2= \mathcal {L} h-\sum_{i,j} (a^{ij}h_{x_i})_{x_j}$ with $
\mathcal {L}  =  \sum_{i,j}\big(a^{ij}_{x_j}\ell_{x_i}-a^{ij}\ell_{x_i}\ell_{x_j}-a^{ij} \ell_{x_ix_j}\big)- \ell_t$. 
By applying the same approach in \cite[Theorem 3.1]{tang2009null} and \cite[Theorem 9.26]{lu2021mathematical}, it is directly to use the It\^{o} formula (cf. \cite[Theorem 4.32]{da2014stochastic}) to derive the following fundamental identity:
%we derive a representation for $2\theta I_2[\mathrm{d} z- (a^{ij}z_{x_i})_{x_j}\mathrm{d}t]= 2I_1I_2 + 2I_2^2\mathrm{d}t$.
%Indeed, by virtue of the , we infer that
%\begin{equation}\label{2.7}
%\begin{split}
%	$2I_2\mathrm{d} h= \mathrm{d}\Big(\mathcal {L}h^2+\sum_{i,j}a^{ij}h_{x_i} h_{x_j}\Big) -   \mathcal {L}_t h^2 \mathrm{d}t-  \mathcal {L} (\mathrm{d} h)^2 
%	-\sum_{i,j}\left(a^{ij}_t h_{x_i} h_{x_j}\mathrm{d}t+2 \left(a^{ij}h_{x_i} \mathrm{d} h \right)_{x_j} + a^{ij}\mathrm{d}h_{x_i} \mathrm{d}h_{x_j}\right)$, 
%	$2\mathcal {L} h  \mathrm{d} h=   \mathrm{d}(\mathcal {L}h^2) -  \mathcal {L}_t h^2 \mathrm{d}t- \mathcal {L} (\mathrm{d} h)^2$, 
%	$2\sum_{i,j}(a^{ij}h_{x_i})_{x_j} \mathrm{d} h=  \sum_{i,j}2\left(a^{ij}h_{x_i} \mathrm{d} h \right)_{x_j}  +    \sum_{i,j}\Big(a^{ij}_t h_{x_i} h_{x_j}\mathrm{d}t +  a^{ij}\mathrm{d}h_{x_i} \mathrm{d}h_{x_j}- \mathrm{d}\left( a^{ij}h_{x_i} h_{x_j} \right)\Big)$.
%\end{split}
%\end{equation}
%Moreover by assumption (A$_1$), we have
%\begin{equation}\label{2.8}
%\begin{split}
%	2\sum_{i,j}a^{ij}\ell_{x_i}h_{x_j} I_2=& \sum_{i,j,k,p}\left[2a^{ip}(a^{kj}\ell_{x_k})_{x_p}-(a^{ij}a^{kp}\ell_{x_k})_{x_p}\right]h_{x_i}h_{x_j}
%	-\sum_{i,j}(\mathcal {L}a^{ij}\ell_{x_i})_{x_j}h^2 + \sum_{i,j} (\mathcal {L}a^{ij}\ell_{x_i} h^2)_{x_j}\\
%	&+  \sum_{i,j,k,p}\left(a^{ij}a^{kp}\ell_{x_i} h_{x_k}h_{x_p}-2a^{ij}a^{kp}\ell_{x_k}h_{x_i}h_{x_p}\right)_{x_j}.
%\end{split}
%\end{equation}
%Inserting the identities \eqref{2.7}-\eqref{2.8} into \eqref{2.6}, integrating the resulted identity over $\mathcal {O}_T$ and taking the expectation, we arrive at
\begin{fontsize}{10pt}{10pt}
	\begin{equation}\label{2.9}
\begin{split}
	&2 \mathbb{E}\int_{\mathcal {O}_T}\theta I_2\Big(\mathrm{d} z - \sum_{i,j} (a^{ij}z_{x_i})_{x_j}\mathrm{d}t\Big)\mathrm{d}x  =  \mathbb{E}\int_\mathcal {O}  \Big(\sum_{i,j}a^{ij}h_{x_i}h_{x_j}+\mathcal {L}h^2 \Big) (T) \mathrm{d}x \\
	& + 2\mathbb{E}\Big[\int_{\mathcal {O}_T}\Big(I_2^2+\nabla\cdot  V+  \sum_{i,j}B^{ij} h_{x_i}h_{x_j} \Big)\mathrm{d}x\mathrm{d}t -\int_{\mathcal {O}_T}\sum_{i,j} (a^{ij}h_{x_i} \mathrm{d} h )_{x_j}\mathrm{d}x\Big] \\
	&+ \mathbb{E}\int_{\mathcal {O}_T} A h^2   \mathrm{d}x\mathrm{d} t+4 \mathbb{E}\int_{\mathcal {O}_T}      \sum_{i,j}a^{ij}\ell_{x_ix_j} I_2 h  \mathrm{d}x\mathrm{d} t
	\\
	&+\mathbb{E}\int_{\mathcal {O}_T} \Big(-\sum_{i,j}a^{ij}\mathrm{d}h_{x_i} \mathrm{d}h_{x_j} -\mathcal {L} (\mathrm{d} h)^2\Big)\mathrm{d}x=    J_1+J_2+J_3+J_4+J_5,
\end{split}
\end{equation}
\end{fontsize}
where for all $i,j=1,...,n$,
\begin{fontsize}{9pt}{9pt}
	\begin{equation*} 
	\begin{split}
	&A=-2 \sum_{i,j}(\mathcal {L}a^{ij}\ell_{x_i})_{x_j} - \mathcal {L}_t , 
	B^{ij}=  \sum_{k,p}\Big(2 a^{ip}(a^{kj}\ell_{x_k})_{x_p}-(a^{ij}a^{kp}\ell_{x_k})_{x_p}-\frac{1}{2}\delta_{ik}\delta_{jp}a^{kp}_t\Big) ,\\
	&V^{j}= -  2 \sum_{i,k,p}a^{ij}a^{kp}\ell_{x_k}h_{x_i}h_{x_p} + \sum_{i,k,p}a^{ij}a^{kp}\ell_{x_i} h_{x_k}h_{x_p} +\sum_{i}\mathcal {L}a^{ij}\ell_{x_i} h^2,  V=(V^1,...,V^n)^T.
\end{split}
\end{equation*}\end{fontsize}
By comparing with \cite{hernandez2023global}, it is evident that the $a^{ij}$-, $a^{ij}_{x_k}$-, $a^{ij}_t$- and $a^{ij}_{x_kt}$-terms, which are absent in the case of the Laplacian operator, emerge in $I_2$, $A$, $B^{ij}$ and $V^j$. This presence introduces additional difficulty to the estimation of the R.H.S. of \eqref{2.9}.

\emph{Step 2.} We shall estimate $J_i$ ($i=1,2,3,4,5$) by suitable bounds from below.

\textsc{Estimate for $J_1$.} From the definition of $\ell$, we have
	$\ell_{t}(T)  
	 \leq -C \lambda^2 \mu^3 e^{2\mu(6m+1)}$. Moreover, there hold $
	\ell_{x_i}
	=\lambda\mu\beta _{x_i}\xi$, $
	\ell_{x_it}   =\frac{\gamma_t }{\gamma}\lambda\mu\beta _{x_i}\xi$, $\ell_{x_ix_j}
	=\lambda\mu^2 \beta _{x_i}\beta _{x_j} \xi+\lambda\mu\beta _{x_ix_j}\xi = \lambda\mu^2 \beta _{x_i}\beta _{x_j} \xi+\lambda\xi O(\mu)$ and
	$\ell_{x_ix_jt} = \frac{ \gamma_t }{\gamma}(\lambda\mu\beta _{x_ix_j}\xi+ \lambda\mu^2\beta _{x_i} \beta_{x_j}\xi)$,
which indicate that
\begin{equation}\label{2.12}
\begin{split}
 |\ell_{x_ix_j}(T) |&\leq C \lambda \mu^2 e^{2\mu(6m+1)}, ~~~ |(\ell_{x_i} \ell_{x_j})(T)|  \leq C \lambda^2\mu^2 e^{2\mu(6m+1)}.
\end{split}
\end{equation}
By assumption (A$_1$), we have $\sum_{i,j}a^{ij} h_{x_i}h_{x_j}(T)\geq c_0|\nabla h(T)|^2$, then
\begin{equation}\label{2.10}
\begin{split}
	&\Big(\sum_{i,j}a^{ij}h_{x_i}h_{x_j}+\mathcal {L}h^2 \Big) (T)\\
	&\geq   c_0|\nabla h(T)|^2- \ell_t(T)h^2(T) - \sum_{i,j}\big(a^{ij} \ell_{x_i} \ell_{x_j} +a^{ij} \ell_{x_ix_j} -a^{ij}_{x_j} \ell_{x_i} \big)(T)h ^2(T).
\end{split}
\end{equation}
Particular attention must be paid to the last term on the R.H.S. of \eqref{2.10}, which does not appear in \cite{hernandez2023global}. Indeed, by using the estimates \eqref{2.12}-\eqref{2.10}, we get
$
( \sum_{i,j}a^{ij}h_{x_i}h_{x_j}+\mathcal {L}h^2 ) (T)  %&\geq c_0|\nabla h(T)|^2+ C\left(\lambda^2\mu^2 e^{2\mu(6m+1)}-  \lambda^2 \mu^3 e^{\mu(12m+2)}\right)h^2(T)\\
\geq  c_0|\nabla h(T)|^2-C \lambda^2 \mu^3 e^{\mu(12m+2)}h^2(T),
$
which implies that
\begin{equation}\label{2.13}
\begin{split}
	J_1 \geq  c_0\int_\mathcal {O}|\nabla h(T)|^2\mathrm{d}x - C \int_\mathcal {O}\lambda^2 \mu^3 e^{\mu(12m+2)}  h^2(T)  \mathrm{d}x.
\end{split}
\end{equation}

\textsc{Estimate for $J_2$.}  Unlike the Laplacian operator with constant coefficients, the general divergence-type operator, being dependent on $(\omega,t,x)$, render the components $V^j$ and $B^{ij}$ more complex, thus necessitating careful analysis.  Indeed, by the Dirichlet boundary condition $z|_{\Sigma_T}=0$ and the construction of the weighted function $\beta$, we infer that  $ h |_{\Sigma_T}=0$ and $\frac{\partial \beta}{\partial \nu}|_{\Sigma_T}\leq 0$. It then follows from the Divergence Theorem that
\begin{fontsize}{9pt}{9pt}\begin{equation} \label{2.14}
\begin{split}
	 &\int_{\mathcal {O}_T} \nabla\cdot  V  \mathrm{d}x\mathrm{d}t-\int_{\mathcal {O}_T}\sum_{i,j}(a^{ij}h_{x_i} \mathrm{d} h )_{x_j}  \mathrm{d}x \\
	%& =  \int_{\Sigma_T} \left[\sum_{i,j,k,p}\left(-  2 a^{ij}a^{kp}\ell_{x_k}h_{x_i}h_{x_p} + a^{ij}a^{kp}\ell_{x_i} h_{x_k}h_{x_p} +\mathcal {L}a^{ij}\ell_{x_i} h^2 \right) \nu^j\mathrm{d}t -\sum_{i,j}   a^{ij}\nu^j h_{x_i} \mathrm{d} h\right] \mathrm{d}x\\
	%&  =  \int_{\Sigma_T} \left(-  2 a^{ij}a^{kp}\ell_{x_k}h_{x_i}h_{x_p} + a^{ij}a^{kp}\ell_{x_i} h_{x_k}h_{x_p} \right) \nu^j\mathrm{d}x\mathrm{d}t\\
	&  =  \int_{\Sigma_T} \lambda\mu\xi \sum_{i,j,k,p}a^{ij} a^{kp}\Big(-  2 \frac{\partial\beta}{\partial \nu} \nu^k  \frac{\partial h}{\partial \nu}  \nu^i\frac{\partial h}{\partial \nu} \nu^p + \frac{\partial\beta}{\partial \nu} \nu^i  \frac{\partial h}{\partial \nu}  \nu^k\frac{\partial h}{\partial \nu} \nu^p \Big) \nu^j\mathrm{d}\sigma\mathrm{d}t \\
	%&  =  -\int_{\Sigma_T} \sum_{i,j,k,p}a^{ij} a^{kp} \nu^k  \nu^i  \nu^p \nu^j\lambda\mu\xi \frac{\partial\beta}{\partial \nu} (\frac{\partial h}{\partial \nu} )^2\mathrm{d}x\mathrm{d}t\\
	  &=  \int_{\Sigma_T}\Big(\sum_{i,j}a^{ij} \nu^i \nu^j\Big)^2\lambda\mu\xi (-\frac{\partial\beta}{\partial \nu}) (\frac{\partial h}{\partial \nu} )^2\mathrm{d}\sigma\mathrm{d}t\geq  0.
\end{split}
\end{equation}
\end{fontsize}
	Note that
$
B^{ij}%&=\sum_{k,p}\left[(2 a^{ip} a^{kj} -\sum_{i,j}a^{ij}a^{kp})\ell_{x_k x_p}+[2 a^{ip}a^{kj}_{x_p}
%-(a^{ij}a^{kp})_{x_p}]\ell_{x_k}
%-\frac{1}{2}\delta_{ik}\delta_{jp}a^{kp}_t\right]\\
%&= \sum_{k,p}(2 a^{ip} a^{kj} -a^{ij}a^{kp})(\lambda\mu^2 \beta _{x_k}\beta _{x_p} \xi+\lambda\xi O(\mu))-\lambda\xi O(\mu)+O(1)\\
%&
= \lambda\mu^2\xi\sum_{k,p}(2 a^{ip} a^{kj} -a^{ij}a^{kp}) \beta _{x_k}\beta _{x_p} -\lambda\xi O(\mu)-O(1)$,
%and $4\lambda\mu^2\Sigma_{ijkp}a^{ip}a^{kj}\beta_{x_k}\beta_{x_p}h_{x_i}h_{x_j}\geq 0$,
we have
\begin{equation}\label{2.15}
\begin{split}
	&2\mathbb{E}\int_{\mathcal {O}_T}\sum_{i,j}B^{ij} h_{x_i}h_{x_j}\mathrm{d}x\mathrm{d}t
	%= & 2\mathbb{E}\int_{\mathcal {O}_T} \lambda\mu^2\xi(2 a^{ip} a^{kj} -a^{ij}a^{kp}) \beta _{x_k}\beta _{x_p} h_{x_i}h_{x_j}\mathrm{d}x\mathrm{d}t\\
	%&+ 2\mathbb{E}\int_{\mathcal {O}_T}(\lambda\xi O(\mu)+O(1)) h_{x_i}h_{x_j}\mathrm{d}x\mathrm{d}t\\
	\geq 2\mathbb{E}\int_{\mathcal {O}_T}(\lambda\xi O(\mu)+O(1)) |\nabla h|^2 \mathrm{d}x\mathrm{d}t\\
	&-2\mathbb{E}\int_{\mathcal {O}_T} \lambda\mu^2\xi \Big(\sum_{k,p}a^{kp} \beta _{x_k}\beta _{x_p}  \Big) \Big(\sum_{i,j}a^{ij} h_{x_i}h_{x_j} \Big)\mathrm{d}x\mathrm{d}t.
\end{split}
\end{equation}
Therefore, we deduce from \eqref{2.14} and \eqref{2.15} that
\begin{equation}\label{2.16}
\begin{split}
	J_2\geq
	%& 2\mathbb{E}\int_{\mathcal {O}_T}I_2^2\mathrm{d}x\mathrm{d}t+ %2\mathbb{E}\int_{\mathcal {O}_T}(\lambda\xi O(\mu)+O(1)) |\nabla h|^2 \mathrm{d}x\mathrm{d}t-2\mathbb{E}\int_{\mathcal {O}_T} \lambda\mu^2\xi \Big(\sum_{k,p}a^{kp} \beta _{x_k}\beta _{x_p}  \Big) \Big(\sum_{i,j}a^{ij} h_{x_i}h_{x_j} \Big)\mathrm{d}x\mathrm{d}t\\
	    2\mathbb{E}\int_{\mathcal {O}_T}I_2^2\mathrm{d}x\mathrm{d}t-C\mathbb{E}\int_{\mathcal {O}_T}\left(\lambda\xi O(\mu^2)+\lambda\xi O(\mu)+O(1)\right) |\nabla h|^2 \mathrm{d}x\mathrm{d}t.
\end{split}
\end{equation}

\textsc{Estimate for $J_3$.} The estimate corresponding to this term includes several extra terms on the R.H.S of \eqref{2.17} below, which are absent when dealing with the Laplacian with constant coefficients, we have to carefully control the low-order terms by using the high-order ones.  We first observe that
\begin{equation}\label{2.17}
\begin{split}
	A =  -J_{31}-J_{32}+J_{33},
\end{split}
\end{equation}
with $J_{31}=  \sum_{k,p}2(a^{kp}\mathcal {L}_{x_k}\ell_{x_p}+\mathcal {L}a^{kp}\ell_{x_kx_p}+a^{kp}_{x_p}\mathcal {L}\ell_{x_k})$, $ J_{32}=\sum_{k,p}(a^{kp}_{x_pt}\ell_{x_k}-a^{kp}_t\ell_{x_k}\ell_{x_p}-a^{kp}_t \ell_{x_kx_p})$ and  $J_{33}=\ell_{tt}- \sum_{k,p} (a^{kp}_{x_p}\ell_{x_kt}-a^{kp}(\ell_{x_k}\ell_{x_p})_t-a^{kp} \ell_{x_kx_pt} )$.
From the definition of $\mathcal {L}$ and the property \eqref{2.12}, we infer that
\begin{equation}\label{2.18}
\begin{split}
	\mathcal {L}
	%&=   \sum_{i,j}\left(-a^{ij}\ell_{x_i}\ell_{x_j}-a^{ij} \ell_{x_ix_j}+a^{ij}_{x_j}\ell_{x_i}\right)- \ell_t \\
	%&= \sum_{i,j}\left[-\lambda^2\mu^2 a^{ij}\beta _{x_i}\beta _{x_j}\xi^2-a^{ij}\left(\lambda\mu^2 \beta _{x_i}\beta _{x_j} \xi+\lambda\xi O(\mu)\right)- \lambda\mu a^{ij}_{x_j}\beta _{x_i}\xi\right]- \ell_t\\
	&=-\sum_{i,j}\lambda^2\mu^2 a^{ij}\beta _{x_i}\beta _{x_j}\xi^2+  \lambda\xi O(\mu^2)  - \frac{ \gamma_t }{\gamma}\lambda \varphi.
\end{split}
\end{equation}
In a similar manner, since $\ell_{x_kt}=\frac{ \gamma_t }{\gamma}\lambda \mu \beta_{x_k}\xi$, we have
\begin{equation}\label{2.19}
\begin{split}
	\mathcal {L}_{x_k} &=-2\sum_{i,j}\lambda^2\mu^3 a^{ij}\beta _{x_i}\beta _{x_j}\beta_{x_k} \xi^2 +\lambda^2\xi^2 O(\mu^2) +  \lambda \xi O(\mu^3)  - \frac{ \gamma_t }{\gamma}\lambda \mu \beta_{x_k}\xi.
\end{split}
\end{equation}
%$$
%\mathcal {L}_th^2=\left(a^{ij}_t\ell_{x_i}\ell_{x_j}+a^{ij}_t \ell_{x_ix_j}-a^{ij}_{x_jt}\ell_{x_i}\right)h^2+\left(a^{ij}(\ell_{x_i}\ell_{x_j})_t+a^{ij} \ell_{x_ix_jt}-a^{ij}_{x_j}\ell_{x_it}- \ell_{tt}\right)h^2
%$$
For $J_{31}$, we get by \eqref{2.18} and \eqref{2.19} that
\begin{equation*}
\begin{split}
	J_{31}%=& a^{kp}\mathcal {L}_{x_k}\ell_{x_p}+\mathcal {L}a^{kp}\ell_{x_kx_p}+a^{kp}_{x_p}\mathcal {L}\ell_{x_k}\\
	%=&-2\sum_{i,j,k,p}\lambda\mu\xi a^{kp}\beta _{x_p}\left[-2\lambda^2\mu^3 a^{ij}\beta _{x_i}\beta _{x_j}\beta_{x_k} \xi^2 +\lambda^2\xi^2 O(\mu^2) +  \lambda \xi O(\mu^3)  - \ell_{x_kt}\right]\\
	%&-2\sum_{i,j,k,p}a^{kp}\left[-\lambda^2\mu^2 a^{ij}\beta _{x_i}\beta _{x_j}\xi^2+  \lambda\xi O(\mu^2)  - %\ell_t\right]\left[\lambda\mu^2 \beta _{x_k}\beta _{x_p} \xi+\lambda\xi O(\mu)\right]\\
	%&-2\sum_{i,j,k,p}\lambda\mu a^{kp}_{x_p}\beta _{x_k}\xi\left[-\lambda^2\mu^2 a^{ij}\beta _{x_i}\beta %_{x_j}\xi^2+  \lambda\xi O(\mu^2)  - \ell_t\right]\\
	 &=6\sum_{i,j,k,p}\lambda^3\mu^4 a^{ij}a^{kp}\beta _{x_i}\beta _{x_j}\beta _{x_k}\beta _{x_p}  \xi^3
	+ \lambda^3 \xi^3 O(\mu^3)
	+  \lambda^2 \xi^2 O(\mu^4)
	\\
	&+\frac{ \gamma_t }{\gamma}\Big(2\lambda^2\mu^2\sum_{k,p} a^{kp} \beta _{x_k} \beta _{x_p}\xi^2
	+2\lambda^2\mu^2\sum_{k,p}a^{kp}\beta _{x_k}\beta _{x_p} \varphi\xi + \lambda^2\xi\varphi O(\mu)\Big).
\end{split}
\end{equation*}
For $J_{32}$, simple calculation shows that 
$
		J_{32}
 =\sum_{k,p}[-\lambda\mu a^{kp}_{t x_p}\beta _{x_k}\xi+\lambda^2\mu^2 a^{kp}_t\beta _{x_k}  \beta _{x_p}\xi^2+a^{kp}_t (\lambda\mu^2 \beta _{x_k}\beta _{x_p} \xi+\lambda\xi O(\mu))] 
 =\lambda^2\xi^2O(\mu^2)+\lambda\xi O(\mu^2).
$ For $J_{33}$, one can verify that
$|\gamma_{tt}| \leq C \gamma^3$ for $t\in (0,T/2]$; $
	\gamma_{tt}\equiv 0$  for $t\in [T/2,3T/4]$; $|\gamma_{tt}| \leq C \lambda^2 \mu^4e^{2\mu (6m-4)}$  for $t\in [3T/4,T]$, which implies that $ |\ell_{tt}|\leq C \lambda^3\mu^2\xi^3 $ for all $t \in [0,T]$, and hence
	$$J_{33}
	%&= \ell_{tt}+\frac{ \gamma_t }{\gamma}\sum_{k,p}\left(2\lambda^2\mu^2 a^{kp} \beta _{x_k} \beta _{x_p}\xi^2  +\lambda\mu^2a^{kp}\beta _{x_k} \beta_{x_p}\xi +\lambda\xi O(\mu)\right)\\
	%&\geq -C \lambda^3\mu^2\xi^3 -\frac{ \gamma_t }{\gamma}\left(2\lambda^2\mu^2 a^{kp} \beta _{x_k} \beta _{x_p}\xi^2  +\lambda\mu^2a^{kp}\beta _{x_k} \beta_{x_p}\xi +\lambda\xi O(\mu)\right)\\
	\geq -C \lambda^3\mu^2\xi^3 +\sum_{k,p}\frac{ \gamma_t }{\gamma}\left(2\lambda^2\mu^2 a^{kp} \beta _{x_k} \beta _{x_p}\xi^2  +\lambda\xi O(\mu^2)\right).
$$

From the above estimates for $J_{31}$-$J_{33}$ and   assumption (A$_1$), we obtain
\begin{fontsize}{9.1pt}{9.1pt}
\begin{equation}\label{2.20}
\begin{split}
	J_3&\geq \mathbb{E}\int_{\mathcal {O}_T} \sum_{i,j}\big(6\lambda^3\mu^4 (a^{ij}\beta _{x_i}\beta _{x_j})^2 \xi^3+ \lambda^3 \xi^3 O(\mu^3)
	+  \lambda^2 \xi^2 O(\mu^4)\big)h^2\mathrm{d}x\mathrm{d}t+\mathbb{E}\int_{\mathcal {O}_T}\frac{ \gamma_t }{\gamma} \\
	&\times\sum_{k,p}\big[4\lambda^2\mu^2 a^{kp} \beta _{x_k} \beta _{x_p}\xi^2
	+2\lambda^2\mu^2a^{kp}\beta _{x_k}\beta _{x_p} \varphi\xi  + \lambda^2\xi\varphi O(\mu)+\lambda\xi O(\mu^2)\big]h^2\mathrm{d}x\mathrm{d}t.
\end{split}
\end{equation}
\end{fontsize} 

\textsc{Estimate for $J_4$.} The challenge in estimating this term stems from the need to handle the derivatives of the coefficients included in the summation $I_2$ and $\mathcal {L}$. The key lies in determining the explicit high-order terms that are necessary for the subsequent steps.
By virtue of the definition of $ I_2$ and the property \eqref{2.12}, we infer that
\begin{equation}\label{2.21}
\begin{split}
	4 \sum_{i,j}a^{ij}\ell_{x_ix_j} I_2 h= J_4^1+ J_4^2+ J_4^3,
\end{split}
\end{equation}
where $J_4^1=\sum_{i,j} 4\lambda\mu^2 a^{ij} \beta _{x_i}\beta _{x_j} \xi\mathcal {L} h^2- \lambda\xi \mathcal {L}h^2 O(\mu)  $, $J_4^2=-\sum_{i,j,k,p} (a^{kp}h_{x_k}h)_{x_p}(4 \lambda\mu^2$ $a^{ij} \beta _{x_i}\beta _{x_j} \xi+\lambda\xi O(\mu) )$ and $J_4^3=\sum_{i,j,k,p}a^{kp}h_{x_k}h_{x_p}(4 \lambda\mu^2 a^{ij} \beta _{x_i}\beta _{x_j} \xi+\lambda\xi O(\mu))$.

For $J^1_4$, we get by \eqref{2.18} that
\begin{equation*}
\begin{split}
	J_4^1
	%=& 4 \sum_{i,j}\lambda\mu^2 a^{ij} \beta _{x_i}\beta _{x_j} \xi\left[-\lambda^2\mu^2 a^{ij}\beta _{x_i}\beta _{x_j}\xi^2+  \lambda\xi O(\mu^2)  - \frac{ \gamma_t }{\gamma}\lambda \varphi\right] h^2\\
	%&-  \lambda^3  \xi^3 O(\mu^3)h^2-\lambda^2\xi^2 O(\mu^3)h^2 + \frac{ \gamma_t }{\gamma}\lambda^2\xi \varphi O(\mu)h^2 \\
	=&-\sum_{i,j}\left[
	4 \lambda^3\mu^4 ( a^{ij}\beta _{x_i}\beta _{x_j})^2\xi^3
	+\lambda^2\xi ^2O(\mu^4)+\lambda^3  \xi^3 O(\mu^3)+\lambda^2\xi^2 O(\mu^3)\right] h^2
	\\
	&+\frac{ \gamma_t }{\gamma}\sum_{i,j}\left[-4\lambda^2\mu^2 a^{ij}\beta _{x_i}\beta _{x_j} \varphi \xi  +\lambda^2\xi \varphi O(\mu)\right]h^2.
\end{split}
\end{equation*}
For $J^2_4$, by using the identities $fg_{x_p}=(fg)_{x_p}-f_{x_p}g$ and $\xi_{x_p}=\mu \beta_{x_p} \xi$, we have
\begin{fontsize}{8pt}{8pt}\begin{equation*}
\begin{split}
	J_4^2=& - \sum_{i,j,k,p}\Big(\big[a^{kp}h_{x_k}h\big(4 \lambda\mu^2 a^{ij} \beta _{x_i}\beta _{x_j} \xi+\lambda\xi O(\mu)\big)\big]_{x_p}-a^{kp}h_{x_k}h\big(4 \lambda\mu^2 a^{ij} \beta _{x_i}\beta _{x_j} \xi+\lambda\xi O(\mu)\big)_{x_p}\Big)\\
	=&- \sum_{i,j,k,p}\left[a^{kp}h_{x_k}h\left(4 \lambda\mu^2 a^{ij} \beta _{x_i}\beta _{x_j} \xi+\lambda\xi O(\mu)\right)\right]_{x_p}\\
	&+ \sum_{i,j,k,p}a^{kp}h_{x_k}h\left[4 \lambda\mu^2 (a^{ij} \beta _{x_i}\beta _{x_j})_{x_p} \xi+4 \lambda\mu^3 a^{ij} \beta _{x_i}\beta _{x_j}\beta_{x_p}\xi+\lambda\xi O(\mu)\right]\\
	\geq&- \sum_{i,j,k,p}\left[a^{kp}h_{x_k}h\left(4 \lambda\mu^2 a^{ij} \beta _{x_i}\beta _{x_j} \xi+\lambda\xi O(\mu)\right)\right]_{x_p}- C\mu^2|\nabla h|^2 -C\lambda^2 \mu^4\xi^2 h^2,
\end{split}
\end{equation*}
\end{fontsize}where the last inequality used the fact that
\begin{equation*}
\begin{split}
	&\sum_{i,j,k,p}a^{kp}h_{x_k}h\left[4 \lambda\mu^2 (a^{ij} \beta _{x_i}\beta _{x_j})_{x_p} \xi+4 \lambda\mu^3 a^{ij} \beta _{x_i}\beta _{x_j}\beta_{x_p}\xi+\lambda\xi O(\mu)\right]\\
	&  \leq C\left(\lambda\mu\xi |h||\nabla h| + \lambda\mu^3 \xi|h||\nabla h|+ \lambda\mu^2 \xi|h||\nabla h| \right) \leq  C(\mu^2|\nabla h|^2 +\lambda^2 \mu^4\xi^2 h^2).
\end{split}
\end{equation*}
For $J^3_4$, a straightforward application of the assumption (A$_1$) results in the inequality 
$
	J_4^3
	%=&  4  \sum_{i,j,k,p}\lambda\mu^2a^{kp}h_{x_k}h_{x_p} a^{ij} \beta _{x_i}\beta _{x_j}\xi + \sum_{k,p}\lambda O(\mu)\xi a^{kp}h_{x_k}h_{x_p}\\
	\geq 4  \sum_{i,j,k,p}\lambda\mu^2\xi a^{kp}h_{x_k}h_{x_p} a^{ij} \beta _{x_i}\beta _{x_j}- \lambda \xi O(\mu)|\nabla h|^2.
$

Putting the estimates for $J^1_4$-$J^3_4$ into \eqref{2.21}, integrating by parts for the resulted inequality over $\mathcal {O}_T$ and using the fact of $h|_{\Sigma_T}=0$, we obtain
\begin{fontsize}{9.6pt}{9.6pt}\begin{equation}\label{2.22}
\begin{split}
	&J_4 \geq -\mathbb{E}\int_{\mathcal {O}_T}\sum_{i,j}\left[
	4 \lambda^3\mu^4 ( a^{ij}\beta _{x_i}\beta _{x_j})^2\xi^3
	+\lambda^2\xi^2 O(\mu^4)+\lambda^3  \xi^3 O(\mu^3)+\lambda^2\xi^2 O(\mu^4)\right] \\
	& \times h^2\mathrm{d}x\mathrm{d} t+ \mathbb{E}\int_{\mathcal {O}_T}\sum_{i,j}\frac{ \gamma_t }{\gamma}\left[-4\lambda^2\mu^2 a^{ij}\beta _{x_i}\beta _{x_j} \varphi \xi +\lambda^2\xi \varphi O(\mu)\right]h^2\mathrm{d}x\mathrm{d} t\\
	%&- \int_{\mathcal {O}_T}\left[a^{kp}h_{x_k}h\left(4 \lambda\mu^2 a^{ij} \beta _{x_i}\beta _{x_j} \xi+\lambda\xi O(\mu)\right)\right]_{x_p}\mathrm{d}x\mathrm{d} t\\
	& + \mathbb{E}\int_{\mathcal {O}_T} \sum_{i,j,k,p}4\lambda\mu^2\xi a^{kp}h_{x_k}h_{x_p} a^{ij} \beta _{x_i}\beta _{x_j} \mathrm{d}x\mathrm{d} t- \mathbb{E}\int_{\mathcal {O}_T}[ O(\mu^2)+ \lambda\xi O(\mu)]|\nabla h|^2\mathrm{d}x\mathrm{d} t.
\end{split}
\end{equation}
\end{fontsize}

\textsc{Estimate for $J_5$.} Since $h=\theta z$, we have $
h_{x_i}= \theta (\ell_{x_i} z+ z_{x_i})$. Moreover, it follows from the $z$-equation that
$
\mathrm{d}h_{x_i}=[\cdots]\mathrm{d}t+ \theta \left(\ell_{x_i} \phi_2 +\phi_{2,x_i}  \right)\mathrm{d}W_t,
$
which implies 
$
\sum_{i,j}a^{ij}\mathrm{d}h_{x_i} \mathrm{d}h_{x_j}= \sum_{i,j}\theta^2 a^{ij} \left(\ell_{x_i} \phi_2 +\phi_{2,x_i}  \right)\left(\ell_{x_j} \phi_2 +\phi_{2,x_j}  \right)\mathrm{d}t.
$
By using \eqref{2.18} and the fact of $|\varphi_t|\leq C \lambda\mu \xi^3$, for all $(t,x)\in \mathcal {O}_T$, we have
\begin{equation}\label{2.23}
\begin{split}
	J_5
	=& -\mathbb{E}\int_{\mathcal {O}_T}\theta^2\sum_{i,j} a^{ij} (\lambda\mu\beta_{x_i}\xi \phi_2 +\phi_{2,x_i}  )(\lambda\mu\beta_{x_j}\xi \phi_2 +\phi_{2,x_j}  ) \mathrm{d}x\mathrm{d}t \\
	&-\mathbb{E}\int_{\mathcal {O}_T} \theta^2 \phi_2^2\Big[\sum_{i,j} (a^{ij}_{x_j}\ell_{x_i}-a^{ij}\ell_{x_i}\ell_{x_j}-a^{ij} \ell_{x_ix_j}) - \lambda\varphi_t\Big]  \mathrm{d}x\mathrm{d}t\\
	%\geq & -C\int_{\mathcal {O}_T}\theta^2 \left(\lambda^2 \mu^2 \xi ^2 \phi_2^2+| \nabla \phi_2|^2   \right)\mathrm{d}x\mathrm{d}t -C\int_{\mathcal {O}_T} \lambda^2 \mu^2  \xi ^3 \theta^2\phi_2^2 \mathrm{d}x\mathrm{d}t\\
	\geq & -C\mathbb{E}\int_{\mathcal {O}_T} \lambda^2 \mu^2  \xi ^3 \theta^2\phi_2^2 \mathrm{d}x\mathrm{d}t-C\mathbb{E}\int_{\mathcal {O}_T}\theta^2  | \nabla \phi_2|^2  \mathrm{d}x\mathrm{d}t.
\end{split}
\end{equation} 

Putting the above estimates for $J_i$ ($i=1,...,5$) together, we get from \eqref{2.9} that
\begin{fontsize}{9.8pt}{9.8pt}\begin{subequations}\label{2.24}
\begin{align}
&2 \mathbb{E}\int_{\mathcal {O}_T}\theta I_2\Big(\mathrm{d} z - \sum_{i,j} (a^{ij}z_{x_i})_{x_j}\mathrm{d}t\Big)\mathrm{d}x \label{2.24a}\\
&  \geq 2\mathbb{E}\int_{\mathcal {O}_T}I_2^2\mathrm{d}x\mathrm{d}t +\mathbb{E}\int_\mathcal {O}\big(c_0|\nabla h(T)|^2 + C  \lambda^2 \mu^3 e^{2\mu(6m+1)}h^2(T)\big) \mathrm{d}x \nonumber\\
&  +\mathbb{E}\int_{\mathcal {O}_T} \left[2c_0^2\lambda^3\mu^4\xi^3 |\nabla \beta|^4- \lambda^3 \xi^3 O(\mu^3)
- \lambda^2 \xi^2 O(\mu^4)\right]h^2\mathrm{d}x\mathrm{d}t\label{2.24b}\\
&   +\mathbb{E}\int_{\mathcal {O}_T} \left[2c_0^2\lambda\mu^2\xi |\nabla\beta|^2 -\lambda\xi O(\mu)-O(\mu^2)\right]|\nabla h|^2\mathrm{d}x\mathrm{d}t \label{2.24c}\\
&  -C\mathbb{E}\int_{\mathcal {O}_T} \theta^2(\lambda^2 \mu^2  \xi ^3 \phi_2^2+  | \nabla \phi_2|^2 ) \mathrm{d}x\mathrm{d}t\nonumber\\
&  + \mathbb{E}\int_{\mathcal {O}_T}\sum_{i,j} \frac{ \gamma_t }{\gamma}\Big[-2\lambda^2\mu^2 a^{ij}\beta _{x_i}\beta _{x_j} \varphi \xi+ 4\lambda^2\mu^2 a^{ij} \beta _{x_i} \beta _{x_j}\xi^2
 \label{2.24d}\\
 &-\lambda^2\xi\varphi O(\mu)-\lambda\xi O(\mu^2)\Big]h^2\mathrm{d}x\mathrm{d}t.\nonumber
%&- \int_{\mathcal {O}_T}\left[a^{kp}h_{x_k}h\left(4 \lambda\mu^2 a^{ij} \beta _{x_i}\beta _{x_j} \xi+\lambda\xi O(\mu)\right)\right]_{x_p}\mathrm{d}x\mathrm{d} t\\
\end{align}
\end{subequations}
\end{fontsize}

Let us deal with the terms on the R.H.S. of the last inequality. First, by using the equation satisfied by $z$ and the Cauchy inequality, we have
\begin{equation}\label{2.25}
\begin{split}
\eqref{2.24a}
%&=2 \mathbb{E}\int_{\mathcal {O}_T}\theta I_2\left(\langle \alpha, \nabla z\rangle+ \beta z+ \phi_1 \right)\mathrm{d}x\mathrm{d}t \\
&\leq  \mathbb{E}\int_{\mathcal {O}_T}I_2^2\mathrm{d}x\mathrm{d}t+  C\mathbb{E}\int_{\mathcal {O}_T}\theta ^2 \left(|\nabla z|^2+  z^2+ \phi_1^2 \right)  \mathrm{d}x\mathrm{d}t.
\end{split}
\end{equation}
By using the fact of $\inf_{x \in \mathcal {O} \backslash \overline{\mathcal {O}}_1}|\nabla\beta(x)|\geq a_0>0$, we deduce that
\begin{equation*}
\begin{split}
\eqref{2.24b}\geq  2a_0^4c_0^2\mathbb{E}\int_0^T\int_{\mathcal {O} \backslash \overline{\mathcal {O}}_1} \lambda^3\mu^4\xi^3  h^2\mathrm{d}x\mathrm{d}t- \mathbb{E}\int_{\mathcal {O}_T}  [ \lambda^3 \xi^3 O(\mu^3)+ \lambda^2 \xi^2 O(\mu^4) ]h^2\mathrm{d}x\mathrm{d}t,
\end{split}
\end{equation*}
which implies that, for $\lambda,\mu>0$ large enough,
\begin{equation}\label{2.26}
\begin{split}
\eqref{2.24b}\geq \frac{3}{2}a_0^4c_0^2\mathbb{E} \int_{\mathcal {O}_T} \lambda^3\mu^4\xi^3  h^2\mathrm{d}x\mathrm{d}t-2a_0^4c_0^2\mathbb{E}\int_0^T\int_{\mathcal {O}_1} \lambda^3\mu^4\xi^3  h^2\mathrm{d}x\mathrm{d}t.
\end{split}
\end{equation}
In a similar manner, we also have for $\lambda,\mu>0$ large enough
\begin{equation}\label{2.27}
\begin{split}
\eqref{2.24c}\geq \frac{3}{2}a_0^2c_0^2 \mathbb{E} \int_{\mathcal {O}_T} \lambda\mu^2\xi |\nabla h|^2\mathrm{d}x\mathrm{d}t-2a_0^2c_0^2\mathbb{E}\int_0^T\int_{\mathcal {O}_1} \lambda\mu^2\xi |\nabla h|^2    \mathrm{d}x\mathrm{d}t.
\end{split}
\end{equation} 

The last integral on the R.H.S. of \eqref{2.24} will be divided into three parts with respect to $t$-variable, i.e., $(0,T/2]\cup [T/2,3T/4] \cup [3T/4,T]$.  We utilize an approach similar to that in \cite[Theorem 2.1]{hernandez2023global}, which focuses on the backward SPDEs.  Notably, given the   difference in weighted function, special attention must be paid to both the sign of the derivative and the partitioning of intervals. For simplicity, we use $\eqref{2.24d}|_{[a,b]}$ to represent the integral \eqref{2.24d} restricted to the subset $[a,b]\subseteq [0,T]$.
\begin{itemize} [itemsep=0pt, leftmargin=28pt]
\item [$\bullet$] Recall that $m\geq 1$, we have $
| \gamma_t  |= m\gamma^{1+\frac{1}{m}}  \leq C \gamma^2 $, for all $t \in (0,T/4]$. Since $\gamma (t)\geq 1$ is a decreasing $\mathcal {C}^2$-function on $[T/4,T/2]$, there must be a constant $C>0$ such that $\max_{t\in [T/4,T/2]}| \gamma_t (t)| \leq C \min_{t\in [T/4,T/2]}\gamma^2(t)$. In both cases, we find that
$
| \gamma_t (t)| \leq C \gamma^2(t),
$
for all $t\in(0,T/2]$ and some constant $C>0$. Moreover, by the definition of $\varphi$ and $\xi$, we have
$
|\gamma \varphi|= \gamma^2 (\mu e^{6\mu(m+1)}- e^{\mu(\beta+6m)})
%\leq \mu \gamma^2 e^{6\mu(m+1)}
\leq \mu \xi^2 \frac{e^{6\mu(m+1)}}{e^{2\mu (\beta(x)+6m)}}\leq \mu \xi^2.
$
Therefore, by using the inequality $\xi^a\leq \xi^b$ for any $b>a>0$, the assumption (A$_1$) and the boundedness of $\nabla\beta$, we get
\begin{equation*} 
\begin{split}\hspace{-5mm}
	\eqref{2.24d}|_{(0,T/2]}
	&\geq - C\mathbb{E}\int_{\mathcal {O}_T}\gamma\Big( \lambda^2\mu^2  |\varphi| \xi+\lambda^2\mu^2  \xi^2+\lambda^2\xi|\varphi| O(\mu)+\lambda\xi O(\mu^2)\Big)h^2\mathrm{d}x\mathrm{d}t\\
	&\geq - C\mathbb{E}\int_{\mathcal {O}_T} \lambda^2\xi^3O(\mu^3) h^2\mathrm{d}x\mathrm{d}t,
\end{split}
\end{equation*}
where the second inequality used the fact that $0<\gamma(t)\leq \xi(t)$, for all $t\in (0,T]$.

\item [$\bullet$]  Since for any $t\in [T/2,3T/4]$, $\gamma(t)\equiv1$, we have
\begin{equation*} 
\begin{split}
	\eqref{2.24d}|_{[T/2,3T/4]}\equiv 0 .
\end{split}
\end{equation*}

\item [$\bullet$] For any $t\in [3T/4,T]$, it follows from the definition of the function $\gamma$ and $\varphi$ that
$
\varphi(t)<0$,   $\gamma_t (t)= \frac{4}{T}\sigma\left(1- 4(T-t)/T \right)^{\sigma-1}\in [0,4\sigma/T]$  and $\gamma(t) \in [1,2]$,
which yields that $- \gamma_t \varphi \geq 0$ on $ [3T/4,T]$,  and
\begin{equation*}
\begin{split}\hspace{-2mm}
	\eqref{2.24d}|_{[3T/4,T]}&\geq  \frac{1}{2}\mathbb{E}\int_{3T/4}^T \int_{\mathcal {O}}\gamma_t\left(2 c_0\lambda^2\mu^2 |\nabla\beta|^2 |\varphi| \xi+ 4c_0\lambda^2\mu^2|\nabla\beta|^2\xi^2\right)h^2\mathrm{d}x\mathrm{d}t
	\\
	& -\mathbb{E}\int_{3T/4}^T\int_{\mathcal {O}}\gamma_t\left(\lambda^2\xi|\varphi| O(\mu)+\lambda\xi O(\mu^2)\right)h^2\mathrm{d}x\mathrm{d}t.
\end{split}
\end{equation*}
%Thanks to the property of $\beta$ (cf. Lemma \ref{weight}), we get
%\begin{equation}\label{2.28}
%\begin{split}
%	\eqref{2.24d}|_{[3T/4,T]}\geq& c_0\mathbb{E}\int_{3T/4}^T\int_{\mathcal {O} }\lambda^2\mu^2\gamma_t\left( |\varphi| \xi+   \xi^2\right)h^2\mathrm{d}x\mathrm{d}t-C\mathbb{E}\int_{3T/4}^T\int_{\mathcal {O} _1}\lambda^2\mu^2\gamma_t\left(  |\varphi| \xi+ \xi^2\right)h^2\mathrm{d}x\mathrm{d}t.
%\end{split}
%\end{equation}
\end{itemize}
According to the above discussion and the property of $\beta$ in Lemma \ref{weight}, we obtain
\begin{equation}\label{2.31}
\begin{split}
&\eqref{2.24d}\geq  c_0\mathbb{E}\int_{3T/4}^T\int_{\mathcal {O} }\lambda^2\mu^2 \gamma_t\left( |\varphi| \xi+   \xi^2\right)h^2\mathrm{d}x\mathrm{d}t\\
&-C\mathbb{E}\int_{3T/4}^T\int_{\mathcal {O} _1}\lambda^2\mu^2\gamma_t\left(  |\varphi| \xi+ \xi^2\right)h^2\mathrm{d}x\mathrm{d}t - C\mathbb{E}\int_{\mathcal {O}_T} \lambda^2\xi^3O(\mu^3) h^2\mathrm{d}x\mathrm{d}t .
\end{split}
\end{equation}

Putting the estimates \eqref{2.25}-\eqref{2.31} together,  we arrive at
\begin{equation}\label{2.32}
\begin{split}
&\mathbb{E} \int_{\mathcal {O}_T} \lambda^3\mu^4\xi^3  h^2\mathrm{d}x\mathrm{d}t+ \mathbb{E} \int_{\mathcal {O}_T} \lambda\mu^2\xi |\nabla h|^2\mathrm{d}x\mathrm{d}t +\mathbb{E}\int_\mathcal {O}|\nabla h(T)|^2\mathrm{d}x  \\
&  +  \mathbb{E}\int_\mathcal {O}\lambda^2 \mu^3 e^{2\mu(6m+1)} h^2(T)  \mathrm{d}x + \mathbb{E}\int_{3T/4}^T\int_{\mathcal {O} }\lambda^2\mu^2 \gamma_t\left( |\varphi| \xi+   \xi^2\right)h^2\mathrm{d}x\mathrm{d}t\\
&  \leq C\Big[\mathbb{E}\int_{3T/4}^T\int_{\mathcal {O}_1}\lambda^2\mu^2\gamma_t (  |\varphi| \xi+ \xi^2 )h^2\mathrm{d}x\mathrm{d}t +\mathbb{E}\int_0^T\int_{\mathcal {O}_1} \lambda^3\mu^4\xi^3  h^2\mathrm{d}x\mathrm{d}t\\
&+ \mathbb{E}\int_0^T\int_{\mathcal {O}_1} \lambda\mu^2\xi |\nabla h|^2    \mathrm{d}x\mathrm{d}t  + \mathbb{E}\int_{\mathcal {O}_T} \theta^2\left( \lambda^2 \mu^2  \xi ^3\phi_2^2 + | \nabla \phi_2|^2 +  \phi_1 ^2\right)  \mathrm{d}x\mathrm{d}t\Big],
\end{split}
\end{equation}
for any $\lambda$, $\mu>0$ large enough.

\emph{Step 3.} Let us first transform \eqref{2.32} by using the solution $z$ of \eqref{2.1}. Since $h= \theta z$, we have the equivalence\footnote{Note that $\nabla h =\theta(\nabla z+\lambda \mu \xi z \nabla \beta)$, $\nabla z= \theta^{-1}(\nabla h-\lambda\mu \xi h \nabla \beta )$ and $\max_{x\in\overline{\mathcal {O}}} |\nabla \beta(x)|\leq c_1$, for some $c_1>0$. By using the basic inequality $(a+b)^2\leq2a^2+2b^2$, straightforward  calculation leads to $\theta^2(| \nabla z |^2 + \lambda^2 \mu^2 \xi^2 z^2 ) \leq 2 |\nabla h|^2+(2c_1^2+1)\lambda^2\mu^2\xi^2h^2$ and
	$| \nabla h |^2 + \lambda^2 \mu^2 \xi^2 h^2 \leq 2 \theta^2|\nabla z|^2+(2c_1^2+1)\lambda^2\mu^2\theta^2\xi^2z^2$, which thereby yield the desired equivalence.}
$\theta^2(| \nabla z |^2 + \lambda^2 \mu^2 \xi^2 z^2 ) \approx  | \nabla h |^2 + \lambda^2 \mu^2 \xi^2 h^2$. By the definition of $\xi$, there holds $\xi (T,x) \leq e^{2\mu(6m+1)}$ for all $x\in \mathcal {O}$. It   follows from \eqref{2.32} that
\begin{equation}\label{2.34}
\begin{split}
&\mathbb{E} \int_{\mathcal {O}_T} \lambda\mu^2\xi \theta^2\left(| \nabla z |^2 + \lambda^2 \mu^2 \xi^2 z^2 \right)\mathrm{d}x\mathrm{d}t +\mathbb{E}\int_{\mathcal {O}}\theta^2 (T)|\nabla z(T)|^2\mathrm{d}x \\
&  +  \mathbb{E}\int_{\mathcal {O}}\lambda^2 \mu^3 e^{2\mu(6m+1)}(\theta^2z^2)(T) \mathrm{d}x + \mathbb{E} \int_{3T/4}^T\int_{\mathcal {O} }\lambda^2\mu^2 \theta^2\gamma_t \left( |\varphi| \xi+   \xi^2\right)z^2\mathrm{d}x\mathrm{d}t \\
&  \leq C\Big(\mathbb{E}\int_{3T/4}^T\int_{\mathcal {O}_1}\lambda^2\mu^2\theta^2\gamma_t |\varphi| \xi z^2\mathrm{d}x\mathrm{d}t +\mathbb{E}\int_0^T\int_{\mathcal {O}_1} \lambda\mu^2\xi \theta^2 | \nabla z |^2  \mathrm{d}x\mathrm{d}t\\
&   +\mathbb{E}\int_{\mathcal {O}'_T} \lambda^3\mu^4 \xi^3 \theta^2  z^2  \mathrm{d}x\mathrm{d}t+C\mathbb{E}\int_{\mathcal {O}_T} \theta^2\left( \lambda^2 \mu^2  \xi ^3\phi_2^2 + | \nabla \phi_2|^2 +  \phi_1 ^2\right)  \mathrm{d}x\mathrm{d}t \Big).
\end{split}
\end{equation}
It remains to establish an appropriate uniform bound for the gradient term  on the R.H.S. of \eqref{2.34}.  To achieve this goal, we shall employ the standard cut-off approach alongside energy estimation method for parabolic SPDEs (cf. \cite[p.315]{lu2021mathematical}). However, special attention must be paid to the terms involving $\gamma_t$, which differ slightly from \cite[(68)-(72)]{hernandez2023global} as our analysis now focuses on the forward SPDEs.

More precisely, since $\mathcal {O}_1\subset\subset \mathcal {O}'$, one can choose a smooth cut-off function $\zeta \in \mathcal {C}_0^\infty(\mathcal {O}';[0,1])$ such that $\zeta \equiv1$ in $\mathcal {O}_1$. By applying the It\^{o} formula, we infer  that
$
\mathrm{d}(\lambda\mu^2\zeta^2\xi \theta^2z^2)=\lambda\mu^2 \zeta^2(\xi \theta^2)_t z^2\mathrm{d}t+2\lambda\mu^2\zeta^2\xi \theta^2 z \mathrm{d}z+ \lambda\mu^2\zeta^2\xi \theta^2(\mathrm{d}z)^2,
$
which together with $\lim_{t\rightarrow 0^+}\theta(t,\cdot)=0$ and the $z$-equation  (i.e., \eqref{2.1} with $\textbf{b}\equiv\textbf{0}$) lead to
%\begin{equation}
%\begin{split}
%0\geq&
%%\mathbb{E}\int_{\mathcal {O}'} \lambda\mu^2\zeta^2(\xi \theta^2z^2)(0)\mathrm{d}x +
%\mathbb{E}\int_{\mathcal {O}_T}\lambda\mu^2\zeta^2( \xi\theta^2)_tz^2 \mathrm{d}x\mathrm{d}t \\
%&+2c_0\mathbb{E}\int_{\mathcal {O}_T}\lambda\mu^2 \zeta^2\xi \theta^2|\nabla z|^2 \mathrm{d}x\mathrm{d}t+2\mathbb{E}\int_{\mathcal {O}_T}\lambda\mu^2 (\zeta^2\xi \theta^2)_{x_j}a^{ij}z_{x_i}z \mathrm{d}x\mathrm{d}t \\
%&+ 2\mathbb{E}\int_{\mathcal {O}_T} \lambda\mu^2\zeta^2\xi \theta^2zF \mathrm{d}x\mathrm{d}t
%%+\mathbb{E}\int_{\mathcal {O}_T}\lambda\mu^2\zeta^2\xi \theta^2Z^2 \mathrm{d}x\mathrm{d}t.
%\end{split}
%\end{equation}
\begin{equation}\label{2.35}
\begin{split}
2 \mathbb{E}\int_{\mathcal {O}_T}\sum_{i,j} \lambda\mu^2 \zeta^2\xi \theta^2a^{ij}z_{x_i}z_{x_j} \mathrm{d}x\mathrm{d}t
\leq \mathbb{E}\int_{\mathcal {O}_T}\lambda\mu^2\zeta^2 \xi \phi_2^2 \mathrm{d}x\mathrm{d}t+ \sum_{i=1,2,3}\eqref{2.35}_i,
\end{split}
\end{equation}
where $\eqref{2.35}_1= 2\mathbb{E}\int_{\mathcal {O}_T} \lambda\mu^2\zeta^2\xi \theta^2z\left(\langle\textbf{ a}, \nabla z\rangle+ \alpha z+ \phi_1 \right) \mathrm{d}x\mathrm{d}t$, $\eqref{2.35}_2
		=-2\mathbb{E}\int_{\mathcal {O}_T}\sum_{i,j} $ $\lambda\mu^2 a^{ij}z_{x_i}(\zeta^2\xi \theta^2)_{x_j}z \mathrm{d}x\mathrm{d}t$  and $\eqref{2.35}_3=\mathbb{E}\int_{\mathcal {O}_T}\lambda\mu^2\zeta^2( \xi\theta^2)_tz^2 \mathrm{d}x\mathrm{d}t$.

To completes the proof, it remains to estimate the terms $\eqref{2.35}_1$-$\eqref{2.35}_3$ involved in $\eqref{2.35}$, which can be treated as follows:

\begin{itemize} [itemsep=0pt, leftmargin=28pt]
\item [$\bullet$] By using the Young inequality, we get for any $\epsilon>0$ 
	\begin{equation*}  
		\begin{split}
			\eqref{2.35}_1
			%\leq& \mathbb{E}\int_{\mathcal {O}_T} \theta^2 \zeta^2\left(\langle \alpha, \nabla z\rangle+ \beta z+ \phi_1 \right)^2 \mathrm{d}x\mathrm{d}t+\mathbb{E}\int_{\mathcal {O}_T} \lambda^2\mu^4 \zeta^2\xi^2 \theta^2z^2 \mathrm{d}x\mathrm{d}t\\
			\leq   \epsilon\mathbb{E}\int_{\mathcal {O}_T}  \lambda\mu^2 \zeta^2\xi\theta^2| \nabla z|^2   \mathrm{d}x\mathrm{d}t +C\mathbb{E}\int_{\mathcal {O}_T} \theta^2 \big (\phi_1^2    + \zeta^2\lambda^2\mu^4 \xi^3z^2\big) \mathrm{d}x\mathrm{d}t .
		\end{split}
	\end{equation*}
	
\item [$\bullet$] Since
$
(\zeta^2\xi \theta^2)_{x_j}= 2\zeta \zeta_{x_j}\xi \theta^2+ \mu\zeta^2\xi \theta^2\beta_{x_j}+ 2\lambda\mu\beta_{x_j}\zeta^2\xi^2\theta^2,
$
we have
\begin{equation*} 
	\begin{split}
		\eqref{2.35}_2 %&\leq   \mathbb{E}\int_{\mathcal {O}_T}\sum_{i,j} \lambda\mu^2 \theta^2 a^{ij}z_{x_i}z \left(2\zeta \zeta_{x_j}\xi + \mu\zeta^2\xi \beta_{x_j}+ 2\lambda\mu\beta_{x_j}\zeta^2\xi^2\right) \mathrm{d}x\mathrm{d}t\\
		&\leq  \epsilon \mathbb{E}\int_{\mathcal {O}_T} \lambda\mu^2 \zeta^2\xi \theta^2|\nabla z|^2\mathrm{d}x\mathrm{d}t +C \mathbb{E}\int_{\mathcal {O}_T} \lambda^3\mu^4 \zeta^2\xi^3\theta^2 z^2 \mathrm{d}x\mathrm{d}t.
	\end{split}
\end{equation*}	
	
\item [$\bullet$] 	Because of supp$\zeta \subset \mathcal {O}'$, $ \gamma_t \equiv0$ on $[T/2,3T/4]$ and $(\xi\theta^2)_t= \xi_t \theta^2+  2\frac{\gamma_t }{\gamma}\lambda\varphi\xi \theta^2$,
it follows that
\begin{equation*} 
	\begin{split}
		&\eqref{2.35}_3 =   \mathbb{E}\int_{\mathcal {O}_T}  \lambda\mu^2\zeta^2  \xi_t \theta^2z^2 \mathrm{d}x\mathrm{d}t + 2\mathbb{E}\int_{0}^{T/2}\int_{\mathcal {O}'}\frac{ \gamma_t }{\gamma} \lambda^2\mu^2\zeta^2 \varphi\xi \theta^2z^2 \mathrm{d}x\mathrm{d}t\\
		&+ 2\mathbb{E}\int_{3T/4}^{T}\int_{\mathcal {O}'} \frac{ \gamma_t }{\gamma} \lambda^2\mu^2\zeta^2 \varphi\xi \theta^2z^2 \mathrm{d}x\mathrm{d}t
		=    \eqref{2.35}_{31}+\eqref{2.35}_{32}+\eqref{2.35}_{33}.
	\end{split}
\end{equation*} 

Let us estimate the terms $\eqref{2.35}_{31}$-$\eqref{2.35}_{33}$ as follows: (\textbf{1}) Since $|\xi_t|\leq C \lambda\mu \xi^3$ for all $(t,x)\in \mathcal {O}_T$, supp$ \zeta \subset \mathcal {O}'\Rightarrow \zeta\equiv0 $ on $\mathcal {O}\setminus\overline{\mathcal {O}'}$, and $\zeta$ is bounded on $\overline{\mathcal {O}'}$, we infer that
$
|\eqref{2.35}_{31} |\leq C\mathbb{E}\int_{\mathcal {O}_T}  \lambda^2\mu^3  \xi^3 \theta^2\zeta^2z^2 \mathrm{d}x\mathrm{d}t\leq C\mathbb{E}\int_{\mathcal {O}'_T} \lambda^2\mu^3  \xi^3 \theta^2z^2 \mathrm{d}x\mathrm{d}t
$;
(\textbf{2})  As $| \gamma_t |\leq C \gamma^2$, $|\gamma\varphi|\leq \mu\xi^2$ on $(0,T/2]$, it is easy to derive that
$
|\eqref{2.35}_{32}| \leq C\mathbb{E}\int_{0}^{T/2}\int_{\mathcal {O}'} \lambda^2\mu^3 \xi^3 \theta^2z^2 \mathrm{d}x\mathrm{d}t 
$; (\textbf{3}) 
Because of $\gamma_t (t)\geq 0$,  $\varphi(t)\leq 0$ on $ [3T/4,T]$ and $1\leq \gamma(t)\leq 2$, we have
$
-\eqref{2.35}_{33}\geq C\mathbb{E}\int_{3T/4}^{T}\int_{\mathcal {O}'} \lambda^2\mu^2\zeta^2 \gamma_t |\varphi|\xi \theta^2z^2 \mathrm{d}x\mathrm{d}t.
$ Therefore, by inserting estimates for $\eqref{2.35}_{31}$-$\eqref{2.35}_{33}$ into \eqref{2.35}$_{3}$, we get
\begin{equation*} 
\begin{split}
&\eqref{2.35}_3 \leq   C\mathbb{E}\int_{\mathcal {O}_T}  \lambda^2\mu^3  \xi^3 \theta^2\zeta^2z^2 \mathrm{d}x\mathrm{d}t+C\mathbb{E}\int_{0}^{T/2}\int_{\mathcal {O}'} \lambda^2\mu^3 \xi^3 \theta^2z^2 \mathrm{d}x\mathrm{d}t\\
&-C\mathbb{E} \int_{3T/4}^{T}\int_{\mathcal {O}'} \lambda^2\mu^2\zeta^2 \gamma_t |\varphi|\xi \theta^2z^2 \mathrm{d}x\mathrm{d}t.
\end{split}
\end{equation*}
\end{itemize}

Substituting the estimates for \eqref{2.35}$_{1}$-\eqref{2.35}$_{3}$ into \eqref{2.35}, leveraging the assumption (A$_1$) and the fact of
$\zeta\equiv 1$ on $\mathcal {O}_1$, and choosing sufficiently small
$\epsilon$, we obtain
\begin{equation}\label{2.40}
\begin{split}
&\mathbb{E}\int_{0}^{T}\int_{\mathcal {O}_1}\lambda\mu^2\xi \theta^2 |\nabla z|^2\mathrm{d}x\mathrm{d}t+ \mathbb{E} \int_{3T/4}^{T}\int_{\mathcal {O}_1} \lambda^2\mu^2  \gamma_t  |\varphi|\xi \theta^2 z^2 \mathrm{d}x\mathrm{d}t \\
& \leq  C\Big(\mathbb{E}\int_{\mathcal {O}_T} \theta^2  \phi_1^2   \mathrm{d}x\mathrm{d}t +  \mathbb{E}\int_{\mathcal {O}'_T} \lambda^3\mu^4 \xi^3\theta^2 z^2 \mathrm{d}x\mathrm{d}t \Big).
\end{split}
\end{equation}
Using the property of
$\gamma(t)$ on $[4T/3,T]$, the   Carleman estimate \eqref{carleman1} is derived by combining the estimates \eqref{2.34} and \eqref{2.40}.
The proof of Theorem \ref{thm1} is completed.
\end{proof}

\section{Controllability of backward parabolic SPDEs}\label{sec3}

\subsection{A new Carleman estimate}
To prove Theorem \ref{thm2}, by using Lions's HUM method \cite{Lions1988} and a duality argument similar to \cite{liu2014global}, let us first consider the following forward deterministic system:
\begin{equation}\label{7.1}
\left\{
\begin{aligned}
&z_t-\nabla\cdot (\mathcal {A}\nabla z)+ \langle \textbf{a}, \nabla z\rangle+ \alpha z= \phi_1 ~ \textrm{in}~\mathcal {O}_T,\\
&z=0 ~\textrm{on}~\Sigma_T,\quad
z(0)=z_0 ~~\textrm{in}~\mathcal {O},
\end{aligned}
\right.
\end{equation}
where $\textbf{a}\in L^\infty(\mathcal {O}_T;\mathbb{R}^n)$, $\alpha\in L^\infty(\mathcal {O}_T)$ and $\phi_1 \in L ^2(0,T;L^2(\mathcal {O}))$. As a special case of   Theorem \ref{thm1} (by choosing $\textbf{b}\equiv\textbf{0}$ and $\phi_2\equiv0$), we have the following lemma.
\begin{lemma} \label{thm7.1}
For any $z_0 \in L^2_{\mathcal {F}_0}(\Omega;L^2(\mathcal {O}))$, there exist $\lambda_0,\mu_0>0$ such that the unique solution $z$ to the system \eqref{7.1} satisfies
\begin{equation}\label{carleman7.1}
\begin{split}
&  \int_{\mathcal {O}} e^{2\lambda\varphi(T)} (|\nabla z(T)|^2 +  \lambda^{2} \mu^{3}    z^2(T)) \mathrm{d}x +   \int_{\mathcal {O}_T} \lambda\mu^{2} \xi\theta^2 \big(| \nabla z |^2+\lambda^2\mu^{2} \xi^2 z^2\big) \mathrm{d}x\mathrm{d}t \\
&  \leq C \Big(  \int_{\mathcal {O}_T} \theta^2 \phi_1^2 \mathrm{d}x\mathrm{d}t +  \int_{\mathcal {O}'_T} \lambda^{3}\mu^{4} \xi^{3}\theta^2 z^2 \mathrm{d}x\mathrm{d}t \Big),
\end{split}
\end{equation}
for all $\lambda \geq \lambda_0$ and $\mu \geq \mu_0$.
\end{lemma}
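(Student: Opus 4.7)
The plan is to recognize the deterministic system~\eqref{7.1} as a particular instance of the stochastic system~\eqref{2.1} and then simply invoke Theorem~\ref{thm1}. First, I would view any deterministic $L^2(\mathcal{O})$-valued function as a trivially $\mathbb{F}$-adapted stochastic process; in particular, $z_0 \in L^2(\mathcal{O})$ can be regarded as an element of $L^2_{\mathcal{F}_0}(\Omega;L^2(\mathcal{O}))$, and the deterministic coefficients $\textbf{a},\alpha$ and source term $\phi_1$ fall into the admissible classes for~\eqref{2.1}. Choosing $\phi_2\equiv 0$ in the diffusion and $\textbf{b}\equiv\textbf{0}$ in the drift, the It\^o differential $\mathrm{d}z$ reduces to $z_t\,\mathrm{d}t$, so~\eqref{2.1} collapses exactly to~\eqref{7.1} and the unique stochastic solution coincides almost surely with the classical deterministic one.

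Next, I would apply Theorem~\ref{thm1} to this $z$. On the right-hand side of~\eqref{carleman1}, the terms $\lambda^2\mu^2\xi^3\phi_2^2$ and $|\nabla\phi_2|^2$ vanish identically, leaving only the $\theta^2\phi_1^2$ term and the observation integral over $\mathcal{O}'_T$. On the left-hand side, since $z$ is deterministic the expectation $\mathbb{E}$ may be dropped from every integral (each integrand is non-random), producing exactly the inequality~\eqref{carleman7.1} with the same parameters $\lambda_0,\mu_0$ delivered by Theorem~\ref{thm1}.

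The only point worth a brief verification is the compatibility of the hypotheses in (A$_1$) with deterministic data: this is immediate, since a deterministic matrix of coefficients with the prescribed $\mathcal{C}^1([0,T];W^{2,\infty}(\mathcal{O}))$-regularity and uniform ellipticity trivially satisfies the measurability and integrability requirements demanded of random coefficients. Consequently, no genuinely new estimate is needed; the argument is a direct specialization of Theorem~\ref{thm1}, and there is no substantive obstacle to overcome. An alternative, self-contained route would be to rerun the computations of the proof of Theorem~\ref{thm1} dropping every $\mathbb{E}$ and every $\mathrm{d}W_t$-term, but this would merely reproduce the same identities in a simpler setting, so I would not pursue it.
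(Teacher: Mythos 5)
Your route is the paper's route: the authors obtain Lemma~\ref{thm7.1} precisely by specializing Theorem~\ref{thm1} to $\textbf{b}\equiv\textbf{0}$, $\phi_2\equiv 0$, so the core of your proposal matches their argument. However, there is one step in your write-up that does not hold in the stated generality: you justify removing the expectation from \eqref{carleman1} by asserting that $z$ is deterministic, but the lemma is stated for $z_0\in L^2_{\mathcal{F}_0}(\Omega;L^2(\mathcal{O}))$ (and (A$_1$) permits a genuinely random $\mathcal{A}$), so $z$ is in general a random field; moreover, when the lemma is invoked later for the equation \eqref{7.5}, the source is $\phi_1=\theta_\epsilon^{-2}r_\epsilon$, which is random. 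From an inequality between expectations, $\mathbb{E}[X]\le C\,\mathbb{E}[Y]$, one cannot deduce the pathwise bound $X\le C\,Y$ that \eqref{carleman7.1} asserts, so "drop $\mathbb{E}$ because the integrands are non-random" is not a valid bridge unless all data are indeed deterministic.

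The correct justification is exactly the "alternative route" you mention and then set aside: since \eqref{7.1} contains no stochastic integral, the entire Carleman computation of Theorem~\ref{thm1} (the fundamental identity, the estimates of $J_1$--$J_4$, and the cut-off/energy step) can be carried out $\omega$-by-$\omega$, with the $J_5$-type It\^{o}-correction terms absent; the resulting constants $\lambda_0,\mu_0,C$ depend only on $\mathcal{O}$, $\mathcal{O}'$, $T$, the ellipticity constant $c_0$ and the uniform bounds in (A$_1$), hence are independent of $\omega$, which yields \eqref{carleman7.1} pathwise (almost surely) for random $z_0$, random coefficients, and random $\phi_1$ alike. With that substitution of justification -- pathwise specialization of the proof of Theorem~\ref{thm1} rather than removal of $\mathbb{E}$ from its conclusion -- your argument is complete and coincides with the intended one; as written, the expectation-dropping step is the one genuine gap.
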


Now we consider the following backward stochastic parabolic equation:
\begin{equation}\label{7.2}
\left\{
\begin{aligned}
&\mathrm{d}r +\nabla\cdot (\mathcal {A}\nabla r)\mathrm{d}t = \left[\lambda^{3}\mu^{4}\xi^{3} \theta^2  z- \nabla\cdot\left(\lambda\mu^{2}\xi\theta^2 \nabla z\right)+ \textbf{1}_{\mathcal {O}'} v  \right] \mathrm{d}t+  R\mathrm{d}W_t ~\textrm{in}~\mathcal {O}_T,\\
&r=0 ~~\textrm{on}~\Sigma_T,\quad
r(T)=r_T ~~\textrm{in}~\mathcal {O},
\end{aligned}
\right.
\end{equation}
where $z$ is the given solution of \eqref{2.1}, $v$ is the control variable and the pair $(r,R)$ denotes the state variable associated with the terminal state $r_T$.  

\begin{lemma}\label{lem7.1}
Let $z$ be the solution of \eqref{2.1} associated with $z_0\in L^2_{\mathcal {F}_0}(\Omega; L^2(\mathcal {O}))$. Then for any $r_T \in L^2_{\mathcal {F}_T}(\Omega;L^2(\mathcal {O}))$, there exists a control $\tilde{v} \in L^2_\mathbb{F}(0,T;L^2(\mathcal {O}')) $ such that the associated solution $(\tilde{r},\tilde{R})$ to \eqref{7.2} verifies
$ \tilde{r}(0)=0$ in $\mathcal {O}$, $\mathbb{P}$-a.s.
Moreover, there exists a positive constant $C$ depending only on $\mathcal {O}$ and $\mathcal {O}'$ such that
\begin{equation}\label{umd}
\begin{split}
&\mathbb{E}\int_{\mathcal {O}_T}\theta ^{-2}\big [\tilde{r}^2 + (\lambda \mu \xi)^{-2}  (|\nabla \tilde{r}|^2+\tilde{R}^2)\big]\mathrm{d}x\mathrm{d}t + \mathbb{E}\int_{\mathcal {O}'_T}\lambda^{-3} \mu^{-4} \xi^{-3}  \theta^{-2} \tilde{v}^2 \mathrm{d}x\mathrm{d}t
\\
&\leq C\Big[\mathbb{E}\int_\mathcal {O}\lambda^{-2} \mu^{-2} \theta^{-2}(T) r^2_T \mathrm{d}x +\mathbb{E}\int_{\mathcal {O}_T}\lambda\mu^{2} \xi\theta^2 \big(\lambda^{2}\mu^{2}\xi^{2} z^2  +    | \nabla z|^2\big) \mathrm{d}x\mathrm{d}t\Big].
\end{split}
\end{equation}
\end{lemma}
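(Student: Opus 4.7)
The plan is to implement Lions's penalized HUM method, combined with the observability furnished by Lemma \ref{thm7.1}. The desired control $\tilde v$ will be obtained as a weak limit of optimal controls of a family of strictly convex penalized functionals, and the estimate \eqref{umd} will come out of uniform energy bounds driven by the Carleman weights. For each $\epsilon>0$, I would introduce on $\eta_0 \in L^2_{\mathcal{F}_0}(\Omega; L^2(\mathcal{O}))$ the functional
\begin{equation*}
J_\epsilon(\eta_0) = \tfrac{1}{2}\mathbb{E}\!\int_{\mathcal{O}'_T}\!\lambda^3\mu^4\xi^3\theta^2\eta^2\,\mathrm{d}x\mathrm{d}t + \tfrac{\epsilon}{2}\mathbb{E}\|\eta_0\|_{L^2(\mathcal{O})}^2 - \mathbb{E}\!\int_\mathcal{O} r_T\,\eta(T)\,\mathrm{d}x + \mathbb{E}\!\int_{\mathcal{O}_T}\!\bigl[\lambda^3\mu^4\xi^3\theta^2 z\,\eta + \lambda\mu^2\xi\theta^2\nabla z\cdot\nabla\eta\bigr]\mathrm{d}x\mathrm{d}t,
\end{equation*}
where $\eta$ solves the forward equation \eqref{7.1} with $\phi_1\equiv 0$ and initial datum $\eta_0$; the drift coefficients $\textbf{a}, \alpha$ in \eqref{7.1} are set to zero so that \eqref{7.1} is precisely the formal adjoint of the principal part of \eqref{7.2}. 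The sign and weight choices are dictated by integrating \eqref{7.2} against $\eta$ and performing integration by parts on the $\nabla\cdot(\lambda\mu^2\xi\theta^2\nabla z)$ term.

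Coercivity and existence of a unique minimizer $\eta_0^\epsilon$ of $J_\epsilon$ are a direct consequence of Lemma \ref{thm7.1}: the observation term $\mathbb{E}\int_{\mathcal{O}'_T}\lambda^3\mu^4\xi^3\theta^2\eta^2$ dominates $\mathbb{E}\int_\mathcal{O}\lambda^2\mu^3\theta^2(T)\eta^2(T)\mathrm{d}x$ together with $\mathbb{E}\int_{\mathcal{O}_T}\lambda\mu^2\xi\theta^2(|\nabla\eta|^2+\lambda^2\mu^2\xi^2\eta^2)$, so the three linear perturbations in $J_\epsilon$ can be absorbed via weighted Cauchy–Schwarz at the cost of exactly the terms on the right-hand side of \eqref{umd}. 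Writing the Euler–Lagrange identity at $\eta_0^\epsilon$ and setting $\tilde v^\epsilon := \mathbf{1}_{\mathcal{O}'}\lambda^3\mu^4\xi^3\theta^2\eta^\epsilon$ as the candidate control, I would then apply It\^o's formula to $\int_\mathcal{O} r^\epsilon\eta\,\mathrm{d}x$ where $(r^\epsilon, R^\epsilon)$ solves \eqref{7.2} with $v=\tilde v^\epsilon$; the symmetry of $\mathcal{A}$ from (A$_1$) cancels the second-order cross terms and taking expectation kills the martingale, producing a clean duality pairing. Comparing this pairing with the Euler–Lagrange identity for every test direction $\tilde\eta_0$ yields the identification $r^\epsilon(0) = \epsilon\,\eta_0^\epsilon$ in $L^2_{\mathcal{F}_0}(\Omega;L^2(\mathcal{O}))$.

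The bound $J_\epsilon(\eta_0^\epsilon)\leq J_\epsilon(0)=0$ combined with the coercive lower bound immediately produces $\epsilon$-uniform estimates: the weighted $L^2(\mathcal{O}'_T)$-norm of $\tilde v^\epsilon$ (with weight $\lambda^{-3}\mu^{-4}\xi^{-3}\theta^{-2}$) is dominated by the right-hand side of \eqref{umd}, and $\epsilon\|\eta_0^\epsilon\|^2\leq C$ together with $r^\epsilon(0)=\epsilon\,\eta_0^\epsilon$ gives $\|r^\epsilon(0)\|_{L^2}^2\leq C\epsilon$. Testing \eqref{7.2} against $\theta^{-2}(\lambda\mu\xi)^{-2} r^\epsilon$ and applying It\^o to the resulting expression provides the remaining Carleman-weighted bounds on $(r^\epsilon,\nabla r^\epsilon, R^\epsilon)$. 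Extracting weak limits gives $(\tilde v, \tilde r, \tilde R)$; $r^\epsilon(0)\to 0$ in $L^2$ yields $\tilde r(0)=0$ $\mathbb{P}$-a.s., and \eqref{umd} follows by weak lower semi-continuity. The main obstacle is handling the $H^{-1}$-type source $\nabla\cdot(\lambda\mu^2\xi\theta^2\nabla z)$: bounding the coupling $\mathbb{E}\int\lambda\mu^2\xi\theta^2\nabla z\cdot\nabla\eta$ inside $J_\epsilon$ requires a genuine gradient Carleman estimate for $\eta$ (not merely an $L^2$ estimate), which is exactly what the $\lambda\mu^2\xi\theta^2|\nabla\eta|^2$ term on the left of \eqref{carleman7.1} provides — so closing the coercivity estimate is precisely where the refined Carleman inequality inherited from Theorem \ref{thm1} is essential. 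A secondary subtlety is that $\mathcal{A}$ remains random through (A$_1$), so the duality must be read path-wise and then integrated against $\mathbb{P}$.
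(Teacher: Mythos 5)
Your dual (adjoint-based) penalized HUM is structurally sound for part of the lemma: the coercivity of your functional does follow from Lemma \ref{thm7.1} used as an observability inequality (and you correctly identify that the gradient term $\lambda\mu^2\xi\theta^2|\nabla\eta|^2$ in \eqref{carleman7.1} is what absorbs the coupling with $\nabla z$), the Euler--Lagrange/It\^o comparison does give $r^\epsilon(0)=\epsilon\,\eta_0^\epsilon$, and $J_\epsilon(\eta_0^\epsilon)\le J_\epsilon(0)=0$ yields the uniform weighted bound on $\tilde v^\epsilon$ and $\|r^\epsilon(0)\|_{L^2}^2\le C\epsilon$, hence $\tilde r(0)=0$ in the limit. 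The paper instead minimizes over the control $v$ the functional $\tfrac12\mathbb{E}\int_{\mathcal{O}'_T}\lambda^{-3}\mu^{-4}\xi^{-3}\theta^{-2}v^2+\tfrac12\mathbb{E}\int_{\mathcal{O}_T}\theta_\epsilon^{-2}r^2+\tfrac1{2\epsilon}\mathbb{E}\int_{\mathcal{O}}r^2(0)$, whose optimality system ties the adjoint $q_\epsilon$ (solving \eqref{7.5} with source $\theta_\epsilon^{-2}r_\epsilon$ and datum $\tfrac1\epsilon r_\epsilon(0)$) to the control via $v_\epsilon=\lambda^3\mu^4\xi^3\theta^2q_\epsilon$ on $\mathcal{O}'$; this difference is not cosmetic.

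The genuine gap is the zero-order state bound $\mathbb{E}\int_{\mathcal{O}_T}\theta^{-2}\tilde r^2$ on the left of \eqref{umd}: your scheme never produces it. Your functional carries no state cost, so the minimization gives only the control bound and $\epsilon\|\eta_0^\epsilon\|^2\le C$; and the energy identity you invoke (It\^o applied to $\lambda^{-2}\mu^{-2}\xi^{-2}\theta^{-2}(r^\epsilon)^2$) cannot supply it either --- it only controls the $(\lambda\mu\xi)^{-2}\theta^{-2}$-weighted norms of $\nabla r^\epsilon$ and $R^\epsilon$, and its right-hand side already contains terms of exactly the missing type, namely $C\lambda^{-1}\mu^{-1}\mathbb{E}\int_0^{3T/4}\!\int_{\mathcal{O}}\theta_\epsilon^{-2}r_\epsilon^2$ coming from the time derivative of the weight and $C\mathbb{E}\int_{\mathcal{O}_T}\theta_\epsilon^{-2}r_\epsilon^2$ coming from the couplings with $z$ and with the control (cf.\ \eqref{7.10}--\eqref{7.11}), so it does not close without the very bound that is absent. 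An a posteriori duality does not rescue this: pairing $r^\epsilon$ with a forward solution $q$ driven by $\theta_\epsilon^{-2}r^\epsilon$ and applying Lemma \ref{thm7.1} to $q$ leaves the local observation term $\mathbb{E}\int_{\mathcal{O}'_T}\lambda^3\mu^4\xi^3\theta^2q^2$ uncontrolled, whereas in the paper this term is harmless precisely because the optimality relation $v_\epsilon=\lambda^3\mu^4\xi^3\theta^2q_\epsilon$ turns it into the good control term in the duality identity \eqref{7.6}--\eqref{7.8}; it is the state-cost term $\tfrac12\mathbb{E}\int\theta_\epsilon^{-2}r^2$ that generates the $\theta^{-2}\tilde r^2$ estimate. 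A secondary but real defect: you work with the unregularized weight $\theta$ at the $\epsilon$-level, yet $\theta^{-2}$ blows up as $t\to0^+$ while $r^\epsilon(0)=\epsilon\eta_0^\epsilon\neq0$, so the quantity $\mathbb{E}\int\theta^{-2}(r^\epsilon)^2$ and the initial boundary term in your It\^o computation are infinite for fixed $\epsilon$; one needs the regularized weight $\theta_\epsilon$ of \eqref{7.3} (non-degenerate at $t=0$, with $\theta\le\theta_\epsilon$) in both the functional and the energy estimate, and Fatou's lemma in the limit, as in the paper.
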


\begin{proof} Define 
$
\theta_\epsilon=e^{\lambda \varphi _{\epsilon}}$, where $\varphi_{\epsilon}(x,t)= \gamma_{\epsilon}(t)\big(e^{\mu (\beta(x)+6m)}-\mu e^{6\mu (m+1)}\big)
$
and
\begin{equation}\label{7.3}
\gamma_{\epsilon}(t)=\left\{
\begin{aligned}
&\gamma (t+\epsilon) &&  \textrm{in}~ [0,T/2-\epsilon], \\
&1&&     \textrm{in}~{[T/2-\epsilon,3T/4]},\\
&1+\left(1- 4(T-t)/T\right)^\sigma  &&     \textrm{in}~[3T/4,T].
\end{aligned}
\right.
\end{equation}
From the property of $\gamma$, we see that $\varphi_{\epsilon} $ is non-degenerate at endpoints $t=0,T$, and $\gamma(t)\geq\gamma_{\epsilon}(t)$ for   $t\in [0,T]$.  Consider the control problem $
(\textrm{\textbf{P}}_\epsilon)$: $ \inf_{v\in \mathcal {U}} J_\epsilon(v)$ subject to \eqref{7.2},
where 
$
\mathcal {U} =   \{v\in L^2_\mathbb{F}(0,T;L^2(\mathcal {O}'));~ \mathbb{E} \int_{\mathcal {O}'_T} \lambda^{-3} \mu^{-4} \xi^{-3} \theta ^{-2}|v|^2 \mathrm{d}x\mathrm{d}t<\infty  \}
$, and
\begin{equation*}
\begin{split}
J_\epsilon(v) = \frac{1}{2}\mathbb{E} \int_{\mathcal {O}'_T} \lambda^{-3} \mu^{-4} \xi^{-3} \theta ^{-2}v^2 \mathrm{d}x\mathrm{d}t +\frac{1}{2}\mathbb{E}\int_{\mathcal {O}_T} \theta_\epsilon^{-2} r^2 \mathrm{d}x\mathrm{d}t+\frac{1}{2\epsilon}\mathbb{E}\int_{\mathcal {O}} r^2(0)\mathrm{d}x.
\end{split}
\end{equation*}
It is easy to check that, for any $\epsilon>0$,  the functional $J_\epsilon(v)$ is continuous, strictly convex and coercive (e.g., \cite{lu2021mathematical}). Hence, (\textbf{P}$_\epsilon$) admits a unique optimal control $v_\epsilon \in \mathcal {U}$, and the associated optimal solution to the system \eqref{7.2} is denoted by $(r_\epsilon,R_\epsilon) \in [L^2_\mathbb{F}(\Omega;\mathcal {C}([0,T];L^2(\mathcal {O})))\cap L^2_\mathbb{F}(0,T;H^1(\mathcal {O}))]\times L^2_\mathbb{F}(0,T;L^2(\mathcal {O}))$. By using a duality argument similar to \cite{lions1971optimal,imanuvilov2003carleman}, one can deduce from the Euler-Lagrange equation $J_\epsilon'(r_\epsilon,R_\epsilon)=0$ ($J_\epsilon'$ denotes the Fr\'{e}chet derivative of $J_\epsilon$) that
\begin{equation}\label{7.4}
\begin{split}
v_\epsilon=\lambda^{3} \mu^{4} \xi^{3} \theta ^{2}q_\epsilon  \hspace{2mm} \textrm{in}\hspace{2mm}\mathcal {O}_T,~~\mathbb{P}\textrm{-a.s.},
\end{split}
\end{equation}
where $q_\epsilon$ is the solution to the following linear random equation:
\begin{equation}\label{7.5}
\left\{
\begin{aligned}
&\mathrm{d}q_\epsilon -\nabla\cdot(\mathcal {A}\nabla q_{\epsilon})\mathrm{d}t=\theta_\epsilon^{-2} r_\epsilon \mathrm{d}t ~\textrm{in}~\mathcal {O}_T,\\
&q_\epsilon=0 ~\textrm{on}~\Sigma_T,\quad
q_\epsilon(x,0)= \frac{1}{\epsilon} r_\epsilon(x,0) ~\textrm{in}~\mathcal {O}.
\end{aligned}
\right.
\end{equation}

\textsc{Estimate for $q_\epsilon$}. Let us first apply the It\^{o} formula to the process $q_\epsilon r_\epsilon$ and then 
%\begin{equation*}
%\begin{split}
%&\int_\mathcal {O} q_\epsilon(T) r_\epsilon(T)\mathrm{d}x-\int_\mathcal {O} q_\epsilon(0) %r_\epsilon(0)\mathrm{d}x%&= q_\epsilon \mathrm{d} r_\epsilon + r_\epsilon\mathrm{d} q_\epsilon \\
%=  -\int_{\mathcal {O}_T}\sum_{i,j}(a^{ij}r_{\epsilon,x_i})_{x_j}q_\epsilon\mathrm{d}x\mathrm{d}t\\
%&+ \int_{\mathcal {O}_T} \sum_{i,j}(a^{ij}q_{\epsilon,x_i})_{x_j}r_\epsilon\mathrm{d}x\mathrm{d}t+\int_{\mathcal %{O}_T} q_\epsilon\left[\lambda^{3}\mu^{4}\xi^{3} \theta^2  z-\lambda\mu^{2} \nabla\cdot\left(\xi\theta^2 \nabla %z\right)+ \textbf{1}_{\mathcal {O}'} v_\epsilon  \right] \mathrm{d}x\mathrm{d}t\\
%&+ \int_{\mathcal {O}_T}\theta_\epsilon^{-2} r_\epsilon^2\mathrm{d}x\mathrm{d}t+ \int_{\mathcal {O}_T} q_\epsilon R_\epsilon\mathrm{d}x \mathrm{d}W_t.
%\end{split}
%\end{equation*}
take the expectation, we deduce from \eqref{7.4} that for any $\epsilon>0$
\begin{equation}\label{7.6}
\begin{split}
&\frac{1}{\epsilon}\mathbb{E}\int_\mathcal {O} r_\epsilon^2(0)\mathrm{d}x +\mathbb{E}\int_{\mathcal {O}_T}\theta_\epsilon^{-2} r_\epsilon^2 \mathrm{d}x\mathrm{d}t+ \mathbb{E}\int_{\mathcal {O}'_T} \lambda^{-3} \mu^{-4} \xi^{-3} \theta^{-2} v_\epsilon^2 \mathrm{d}x\mathrm{d}t
\\
%&\quad\leq- \mathbb{E}\int_{\mathcal {O}_T} \lambda^{3}\mu^{4}\xi^{3} \theta^2  zq_\epsilon \mathrm{d}x\mathrm{d}t - \mathbb{E}\int_{\mathcal {O}_T}  \lambda\mu^{2} \xi\theta^2\nabla q_\epsilon \cdot \nabla z \mathrm{d}x\mathrm{d}t+\int_\mathcal {O} q_\epsilon(T) r_T\mathrm{d}x\\
& \leq  \epsilon\Big[\mathbb{E}\int_{\mathcal {O}_T} \lambda\mu^{2} \xi\theta^2(\lambda^{2}\mu^{2}\xi^{2}   q_\epsilon^2  +    | \nabla q_\epsilon |^2) \mathrm{d}x\mathrm{d}t+ \mathbb{E}\int_\mathcal {O} \lambda^2\mu^3 e^{2\lambda\varphi(T)}q^2_\epsilon(T)\mathrm{d}x\Big]\\
&   + C\mathbb{E}\int_{\mathcal {O}_T}\lambda\mu^{2} \xi\theta^2( \lambda^{2}\mu^{2}\xi^{2}  z^2 +    | \nabla z|^2) \mathrm{d}x\mathrm{d}t +C\mathbb{E}\int_{\mathcal {O}} \lambda^{-2}\mu^{-3} e^{-2\lambda \varphi(T)}  r_T^2 \mathrm{d}x,
\end{split}
\end{equation}
where the terms on the L.H.S. used the fact of $\theta ^{2}\leq\theta_\epsilon^2 $. To estimate the terms on the R.H.S. of \eqref{7.6}, we observe that \eqref{7.5} is a random linear parabolic PDE, and so one can apply Lemma \ref{thm7.1} (with $\textbf{a}=\textbf{0}$, $\alpha=0$ and $\phi_1=\theta_\epsilon^{-2} r_\epsilon$) to \eqref{7.5} to obtain
\begin{equation} \label{7.7}
\begin{split}
& \mathbb{E}\int_\mathcal {O} \lambda^2\mu^3 e^{2\lambda\varphi(T)}q^2_\epsilon(T)\mathrm{d}x+ \mathbb{E}\int_{\mathcal {O}_T} \lambda\mu^{2} \xi\theta^2 | \nabla q_\epsilon |^2 \mathrm{d}x\mathrm{d}t+ \mathbb{E}\int_{\mathcal {O}_T}  \lambda^{3}\mu^{4}\xi^{3} \theta^2  q_\epsilon^2 \mathrm{d}x\mathrm{d}t \\
% &\quad \leq C \int_{\mathcal {O}_T} \theta^2 \theta_\epsilon^{-4} r_\epsilon^2 \mathrm{d}x\mathrm{d}t + C \int_{\mathcal {O}'_T} \lambda^{3}\mu^{4} \xi^{3}\theta^2 q_\epsilon^2 \mathrm{d}x\mathrm{d}t \\
&  \leq C  \mathbb{E} \int_{\mathcal {O}_T} \theta^2\theta_\epsilon^{-4} r_\epsilon^2 \mathrm{d}x\mathrm{d}t + C \mathbb{E}\int_{\mathcal {O}'_T} \lambda^{-3} \mu^{-4} \xi^{-3}  \theta^{-2} v_\epsilon^2 \mathrm{d}x\mathrm{d}t.
\end{split}
\end{equation}
Putting the estimate \eqref{7.7} into \eqref{7.6} and taking $\epsilon>0$ small enough, we obtain
\begin{equation}\label{7.8}
\begin{split}
&\frac{1}{\epsilon}\mathbb{E}\int_\mathcal {O} r_\epsilon^2( 0)\mathrm{d}x +\mathbb{E}\int_{\mathcal {O}_T}\theta_\epsilon^{-2} r_\epsilon^2 \mathrm{d}x\mathrm{d}t+ \mathbb{E}\int_{\mathcal {O}'_T} \lambda^{-3} \mu^{-4} \xi^{-3}  \theta^{-2} v_\epsilon^2 \mathrm{d}x\mathrm{d}t
\\
& \leq C\mathbb{E}\int_{\mathcal {O}_T}\lambda\mu^{2} \xi\theta^2( \lambda^{2}\mu^{2}\xi^{2}  z^2 +    | \nabla z|^2) \mathrm{d}x\mathrm{d}t  +C\mathbb{E}\int_{\mathcal {O}} \lambda^{-2}\mu^{-3} e^{-2\lambda \varphi(T)}  r_T^2 \mathrm{d}x.
\end{split}
\end{equation}

\textsc{Estimate for $R_\epsilon$}. By applying the It\^{o} formula to $\lambda^{-2} \mu^{-2} \xi^{-2}  \theta_\epsilon^{-2} r_\epsilon^2$, it follows from the asumption (A$_1$) and the fact of $\theta^{2}\theta_\epsilon^{-2}\leq 1$ that
%\begin{equation}
%\begin{split}
%0&
%%&= \lambda^{-2} \mu^{-2} (\xi^{-2}  \theta_\epsilon^{-2})_t r_\epsilon^2 dt+ 2\lambda^{-2} \mu^{-2} \xi^{-2}  \theta_\epsilon^{-2} r_\epsilon d r_\epsilon+\lambda^{-2} \mu^{-2} \xi^{-2}  \theta_\epsilon^{-2} (dr_\epsilon)^2\\
% =\int_\mathcal {O}\lambda^{-2} \mu^{-2} \xi^{-2}(0)  \theta_\epsilon^{-2}(0) r_\epsilon^2(0) dx+\int_{\mathcal {O}_T}\lambda^{-2} \mu^{-2} (\xi^{-2}  \theta_\epsilon^{-2})_t r_\epsilon^2 dxdt\\
%&+\int_{\mathcal {O}_T}\sum_{i,j}2\lambda^{-2} \mu^{-2} r_\epsilon a^{ij}r_{x_i}(\xi^{-2}  \theta_\epsilon^{-2})_{x_j} \mathrm{d}t +\int_{\mathcal {O}_T}\sum_{i,j}2\lambda^{-2} \mu^{-2} \xi^{-2}  \theta_\epsilon^{-2} a^{ij}r_{\epsilon,x_i} r_{\epsilon,x_j}\mathrm{d}t \\
%&+ \int_{\mathcal {O}_T}2\lambda  \mu^{2} \xi   \theta^2\theta_\epsilon^{-2} r_\epsilon z  dx \mathrm{d}t \\
%&+ \int_{\mathcal {O}_T}2\lambda^{-1}  \xi^{-2}  \theta_\epsilon^{-2}\theta^2 \xi \nabla r_\epsilon\cdot\nabla z dx \mathrm{d}t \\
%&+ \int_{\mathcal {O}_T}2\lambda^{-1}  \nabla[\xi^{-2}  \theta_\epsilon^{-2}] r_\epsilon  \xi\theta^2 \nabla z dx \mathrm{d}t \\
%&+ \int_{\mathcal {O}_T}2\lambda^{-2} \mu^{-2} \xi^{-2}  \theta_\epsilon^{-2} r_\epsilon   \textbf{1}_{\mathcal {O}'} v_\epsilon  dx \mathrm{d}t \\
%&+\int_{\mathcal {O}_T}\lambda^{-2} \mu^{-2} \xi^{-2}  \theta_\epsilon^{-2} R_\epsilon^2dx\mathrm{d}t
%\end{split}
%\end{equation}
\begin{fontsize}{9.5pt}{9.5pt}\begin{equation}\label{7.9}
\begin{split}
& \mathbb{E}\int_\mathcal {O}\lambda^{-2} \mu^{-2} (\xi^{-2} \theta_\epsilon^{-2}  r_\epsilon^2)(0) \mathrm{d}x +\mathbb{E}\int_{\mathcal {O}_T}\lambda^{-2} \mu^{-2} \xi^{-2}\theta_\epsilon^{-2}(  R_\epsilon^2+ 2c_0 |\nabla r_{\epsilon} |^2)\mathrm{d}x\mathrm{d}t\\
&  \leq  \mathbb{E}\int_{\mathcal {O}_T}\big[2\lambda  \mu^{2} \xi  \theta_\epsilon^{-2}\theta^{2}  |r_\epsilon z|-\lambda^{-2} \mu^{-2} (\xi^{-2}  \theta_\epsilon^{-2})_t r_\epsilon^2  \big]  \mathrm{d}x \mathrm{d}t +  \mathbb{E} \int_{\mathcal {O} } \lambda^{-2} \mu^{-2} \theta^{-2}(T)  r_T^ 2 \mathrm{d}x \\
&  + \mathbb{E}\int_{\mathcal {O}_T}2\lambda^{-1}  \xi^{-1}\theta_\epsilon^{-2}\theta^{2} |\nabla r_\epsilon\cdot\nabla z| \mathrm{d}x \mathrm{d}t+ \mathbb{E}\int_{\mathcal {O}_T}2\lambda^{-1} \xi\theta^2 |r_\epsilon  \nabla(\xi^{-2}  \theta_\epsilon^{-2}) \cdot\nabla z| \mathrm{d}x \mathrm{d}t\\
&   + \mathbb{E}\int_{\mathcal {O}'_T}2\lambda^{-2} \mu^{-2} \xi^{-2}  \theta_\epsilon^{-2} |r_\epsilon   v_\epsilon| \mathrm{d}x \mathrm{d}t+\mathbb{E}\int_{\mathcal {O}_T}\sum_{i,j}2\lambda^{-2} \mu^{-2} |r_\epsilon a^{ij}r_{\epsilon,x_i}(\xi^{-2}  \theta_\epsilon^{-2})_{x_j}|\mathrm{d}x \mathrm{d}t .
\end{split}
\end{equation}
\end{fontsize}
Note that for any $t\in [3T/4,T]$, we have $\gamma_\epsilon=\gamma$, $\gamma_t \geq 0$ and $\varphi\leq 0$,  which implies
$
(\xi^{-2}\theta_\epsilon^{-2})_t \geq-\lambda\varphi\frac{\gamma_t}{\gamma}\xi^{-2}\theta ^{-2}\geq0$, for all $  t\in [3T/4,T]$.
The first term on the R.H.S. of \eqref{7.9} can be estimated as
\begin{equation}\label{7.10}
\begin{split}
-\mathbb{E}\int_{\mathcal {O}_T}\lambda^{-2} \mu^{-2} (\xi^{-2}  \theta_\epsilon^{-2})_t r_\epsilon^2 \mathrm{d}x\mathrm{d}t
%&\leq-\mathbb{E}\int_0^{3T/4} \int_{\mathcal {O}}\lambda^{-2} \mu^{-2} (\xi^{-2}  \theta_\epsilon^{-2})_t r_\epsilon^2 \mathrm{d}x\mathrm{d}t\\
&\leq C\mathbb{E}\int_0^{3T/4} \int_{\mathcal {O}}\lambda ^{-1}\mu ^{-1}\theta_\epsilon^{-2} r_\epsilon^2 \mathrm{d}x\mathrm{d}t.
\end{split}
\end{equation}
By applying the Young inequality to the other terms on the R.H.S. of \eqref{7.9}, and using the property:
$
|\nabla(\xi^{-2}  \theta_\epsilon^{-2})| \leq |2\mu\nabla\beta\xi^{-2} \theta_\epsilon^{-2}|+ |2\lambda\mu\nabla\beta\xi^{-2}\theta_\epsilon ^{-2}\xi_\epsilon|\leq C \mu \xi^{-2} \theta_\epsilon^{-2} + C\lambda\mu \xi^{-1}\theta_\epsilon ^{-2},
$
we get from \eqref{7.9} and \eqref{7.10} that
\begin{equation}\label{7.11}
\begin{split}
& \mathbb{E}\int_\mathcal {O}\lambda^{-2} \mu^{-2} (\xi^{-2} \theta_\epsilon^{-2}  r_\epsilon^2)(0) \mathrm{d}x+\mathbb{E}\int_{\mathcal {O}_T}\lambda^{-2} \mu^{-2} \xi^{-2}\theta_\epsilon^{-2}(  R_\epsilon^2+ 2c_0 |\nabla r_{\epsilon} |^2)\mathrm{d}x\mathrm{d}t \\
& \leq \delta\mathbb{E}\int_{\mathcal {O}_T} \lambda^{-2} \mu^{-2} \theta_\epsilon ^{-2} \xi^{-2} | \nabla r_{\epsilon}|^2\mathrm{d}x \mathrm{d}t +C\mathbb{E}\int_0^{3T/4} \int_{\mathcal {O}}\lambda ^{-1}\mu ^{-1}\theta_\epsilon^{-2} r_\epsilon^2 \mathrm{d}x\mathrm{d}t \\
&  +C\mathbb{E}\int_{\mathcal {O}_T}\theta_\epsilon^{-2} r_\epsilon^2 \mathrm{d}x\mathrm{d}t+ \mathbb{E}\int_{\mathcal {O}_T}4\lambda^2 \mu^{4} \xi^2 \theta^{2}z^2 \mathrm{d}x \mathrm{d}t + \mathbb{E}\int_{\mathcal {O}_T}\mu^2  \theta^{2}|\nabla z|^2 \mathrm{d}x \mathrm{d}t \\
&   + \mathbb{E}\int_{\mathcal {O}'_T}\lambda^{-4} \mu^{-4} \xi^{-4}  \theta_\epsilon^{-2} v_\epsilon^2 \mathrm{d}x \mathrm{d}t  +\mathbb{E} \int_{\mathcal {O} } \lambda^{-2} \mu^{-2}   \theta ^{-2}(T)  r_T^2  \mathrm{d}x  .
\end{split}
\end{equation}
By taking the parameters $\delta>0$ small enough and $\lambda,\mu>1$  large enough, we get from the estimates \eqref{7.8}, \eqref{7.11} and the fact of $\|\xi^{-1}\|_{L^\infty}\leq1$ that
\begin{equation}\label{7.12}
\begin{split}
& \mathbb{E}\int_\mathcal {O}\lambda^{-2} \mu^{-2} (\xi^{-2} \theta_\epsilon^{-2}  r_\epsilon^2)(0) \mathrm{d}x+\mathbb{E}\int_{\mathcal {O}_T}\lambda^{-2} \mu^{-2} \xi^{-2}\theta_\epsilon^{-2}(   R_\epsilon^2+  |\nabla r_{\epsilon} |^2) \mathrm{d}x\mathrm{d}t \\
&  \leq C\mathbb{E}\int_{\mathcal {O}_T}\lambda\mu^{2} \xi\theta^2( \lambda^{2}\mu^{2}\xi^{2}  z^2 +    | \nabla z|^2) \mathrm{d}x\mathrm{d}t  +\mathbb{E} \int_{\mathcal {O} } \lambda^{-2} \mu^{-2} \theta ^{-2}(T)  r_T^2 \mathrm{d}x  .
\end{split}
\end{equation}

\textsc{Taking the limit as $\epsilon\rightarrow0$}. By \eqref{7.8} and \eqref{7.12}, we get the   uniform bound
\begin{equation}\label{um}
\begin{split}
&\frac{1}{\epsilon}\mathbb{E}\int_\mathcal {O} r_\epsilon^2(0)  \mathrm{d}x+\mathbb{E}\int_{\mathcal {O}_T}\big( \theta_\epsilon^{-2}r_\epsilon^2+\lambda^{-2} \mu^{-2} \xi^{-2} \theta_\epsilon^{-2} |\nabla r_{\epsilon} |^2\big)\mathrm{d}x\mathrm{d}t\\
& +\mathbb{E}\int_{\mathcal {O}_T}\lambda^{-2} \mu^{-2} \xi^{-2}  \theta_\epsilon^{-2} R_\epsilon^2\mathrm{d}x\mathrm{d}t+ \mathbb{E}\int_{\mathcal {O}'_T} \lambda^{-3} \mu^{-4} \xi^{-3}  \theta^{-2} v_\epsilon^2 \mathrm{d}x\mathrm{d}t
\\
& \leq C\mathbb{E}\int_{\mathcal {O}_T}\lambda\mu^{2} \xi\theta^2( \lambda^{2}\mu^{2}\xi^{2}  z^2 +    | \nabla z|^2) \mathrm{d}x\mathrm{d}t  +\mathbb{E} \int_{\mathcal {O} } \lambda^{-2} \mu^{-2} \theta ^{-2}(T)  r_T^2 \mathrm{d}x .
\end{split}
\end{equation}
As a consequence, there exists a subsequence of $(r_\epsilon,R_\epsilon,v_\epsilon)$ (still denoted by itself) and a triple $(\tilde{v},\tilde{r},\tilde{R})$ such that, as $\epsilon\rightarrow0$,
\begin{equation}\label{7.13}
\left\{
\begin{aligned}
&v_\epsilon \rightarrow \tilde{v}  ~\textrm{weakly in}~ L^2_\mathbb{F}(0,T;L^2(\mathcal {O}')),~~ r_\epsilon \rightarrow \tilde{r} ~\textrm{weakly in}~ L^2_\mathbb{F}(0,T;H^1_0(\mathcal {O})),\\
&R_\epsilon \rightarrow \tilde{R}  ~ \textrm{weakly in}~ L^2_\mathbb{F}(0,T;L^2(\mathcal {O})).
\end{aligned}
\right.
\end{equation}

We \textit{claim} that $(\tilde{r},\tilde{R}) $ is the unique solution to \eqref{7.2}. Indeed, we denote by $(\hat{r},\hat{R})\in L^2_\mathbb{F}(\Omega; \mathcal {C}([0,T];L^2(\mathcal {O})))\times L^2_\mathbb{F}(0,T;L^2(\mathcal {O}))$ the unique solution to \eqref{7.2} associated with  the control $\tilde{v}$.  For any $h_1,h_2 \in L^2_\mathbb{F}(0,T;L^2(\mathcal {O}))$ and $\vartheta_0\in L^2_\mathbb{F}(\Omega;L^2(\mathcal {O}))$, we consider the forward system
\begin{equation} \label{7.14}
\left\{
\begin{aligned}
&\mathrm{d} \vartheta-\nabla\cdot(\mathcal {A}\nabla \vartheta)  \mathrm{d}t= h_1 \mathrm{d}t+ h_2\mathrm{d}W_t ~\textrm{in}~\mathcal {O}_T,\\
&\vartheta=0 ~\textrm{on}~\Sigma_T,\quad
\vartheta(0)=0~\textrm{in}~\mathcal {O}.
\end{aligned}
\right.
\end{equation}
By applying the It\^{o} formula to the processes $r_\epsilon\vartheta$ and $\tilde{r}\vartheta$, respectively, we get from \eqref{7.2} and \eqref{7.14} that
$\mathbb{E}\int_{\mathcal {O}_T}[r_\epsilon h_1  + \vartheta (\lambda^{3}\mu^{4}\xi^{3} \theta^2  z + \textbf{1}_{\mathcal {O}'} v_\epsilon) +\lambda\mu^{2}  \xi\theta^2\nabla\vartheta \cdot \nabla z  + R_\epsilon h_2] \mathrm{d}x\mathrm{d}t=0$,
and
$\mathbb{E}\int_{\mathcal {O}_T}[\hat{r}h_1 +\vartheta (\lambda^{3}\mu^{4}\xi^{3} \theta^2  z + \textbf{1}_{\mathcal {O}'} \tilde{v} ) +\lambda\mu^{2}  \xi\theta^2\nabla\vartheta \cdot \nabla z  + \hat{R} h_2] \mathrm{d}x\mathrm{d}t=0$. By taking the limit as $\epsilon\rightarrow 0$, we get from \eqref{7.13} and the last two identities that
$
\mathbb{E}\int_{\mathcal {O}_T}[(\tilde{r}-\hat{r})h_1 +(\tilde{R}-\hat{R})h_2] \mathrm{d}x\mathrm{d}t=0.
$
Since $h_1,h_2 \in L^2_\mathbb{F}(0,T;L^2(\mathcal {O}))$ are arbitrary, we obtain that $\tilde{r}=\hat{r}$ and $\tilde{R}=\hat{R}$ in $\mathcal {O}_T$, $\mathbb{P}$-a.s. Finally, by taking the limit $\epsilon\rightarrow 0$, one can conclude from \eqref{um}, \eqref{7.13} and the Fatou Lemma that
$\tilde{r}(0)=0$ in $\mathcal {O}$, $\mathbb{P}$-a.s. The proof of Lemma \ref{lem7.1} is completed.
\end{proof}

\begin{proof}[Proof of Theorem \ref{thm2}]
In the following proof, we consider the special case of $r_T\equiv0$ in \eqref{7.2}. Let $(\tilde{r},\tilde{R})$ be the solution of \eqref{7.2} corresponding to the control $\tilde{v}$ obtained in Lemma \ref{lem7.1}, such that $\tilde{r}( 0)=0$ in $\mathcal {O}$, $\mathbb{P}$-a.s. By \eqref{2.1} and \eqref{7.2}, we have
\begin{equation*}
\begin{split}
0 & = \mathbb{E}\int_{\mathcal {O}_T}\tilde{r}\Big(\nabla\cdot (\mathcal {A}\nabla z) \mathrm{d}x\mathrm{d}t+ \left(\langle \textbf{a}, \nabla z\rangle+ \alpha z+ \phi_1+ \nabla\cdot \textbf{b} \right)\mathrm{d}x\mathrm{d}t \Big)+ \mathbb{E}\int_{\mathcal {O}_T}\tilde{R}\phi_2\mathrm{d}x\mathrm{d}t\\
&+ \mathbb{E}\int_{\mathcal {O}_T}z\Big(-\nabla\cdot (\mathcal {A}\nabla \tilde{r})\mathrm{d}x\mathrm{d}t+ \left[\lambda^{3}\mu^{4}\xi^{3} \theta^2  z-\lambda\mu^{2} \nabla\cdot\left(\xi\theta^2 \nabla z\right)+ \textbf{1}_{\mathcal {O}'} \tilde{v}   \right] \mathrm{d}x\mathrm{d}t \Big).
\end{split}
\end{equation*}
After integrating by parts and taking the expectation, the last identity implies
\begin{equation}\label{7.17}
\begin{split}
&\mathbb{E}\int_{\mathcal {O}_T} \lambda\mu^{2} \xi\theta^2(\lambda^{2}\mu^{2}\xi^{2} z^2  + |\nabla z|^2) \mathrm{d}x\mathrm{d}t\leq \epsilon  \mathbb{E}\int_{\mathcal {O}_T} \lambda\mu^{2} \xi\theta^2 |\nabla z|^2\mathrm{d}x\mathrm{d}t\\
%&\quad \leq %-\mathbb{E}\int_\mathcal {O} z_0(x)\tilde{r}(x,0) \mathrm{d}x\mathrm{d}t%&= z \mathrm{d} \tilde{r}+ \tilde{r} \mathrm{d}z+ \mathrm{d} \tilde{r} \mathrm{d}z\\
% \mathbb{E}\int_{\mathcal {O}_T} |\left(\langle \tilde{r}\textbf{a}, \nabla z\rangle+ \alpha \tilde{r}z+ \tilde{r}\phi_1-\textbf{b}\cdot\nabla\tilde{r} \right)|\mathrm{d}x\mathrm{d}t\\
%  &\quad\quad+ \mathbb{E}\int_{\mathcal {O}_T}|\tilde{R}\phi_2|\mathrm{d}x\mathrm{d}t+\mathbb{E}\int_0^T\int_{\mathcal {O}'} |\tilde{v} z|\mathrm{d}x \mathrm{d}t\\
&   +C\|\textbf{a}\|_{L^\infty_\mathbb{F}(0,T;L^\infty(\mathcal {O};\mathbb{R}^n))}^2
\mathbb{E}\int_{\mathcal {O}_T} \lambda^{-1}\mu^{-2} \xi^{-1}\theta^{-2}\tilde{r}^2 \mathrm{d}x\mathrm{d}t\\
&  + \epsilon\mathbb{E}\int_{\mathcal {O}_T} \lambda^{3}\mu^{4}\xi^{3} \theta^2  z^2\mathrm{d}x\mathrm{d}t+C\|\alpha\|_{L^\infty_\mathbb{F}(0,T;L^\infty(\mathcal {O}))}^2\mathbb{E}\int_{\mathcal {O}_T} \lambda^{-3}\mu^{-4}\xi^{-3} \theta^{-2}  \tilde{r}^2\mathrm{d}x\mathrm{d}t\\
&  + \epsilon\mathbb{E}\int_{\mathcal {O}_T} \theta^{-2}  \tilde{r}^2\mathrm{d}x\mathrm{d}t+ C\mathbb{E}\int_{\mathcal {O}_T} \theta^{2}  \phi_1^2\mathrm{d}x\mathrm{d}t+ C\mathbb{E}\int_{\mathcal {O}_T} \lambda^{2}\mu^{2}\xi^{2} \theta^{2}  (|\textbf{b}|^2+ \phi_2^2)\mathrm{d}x\mathrm{d}t\\
&  + \epsilon\mathbb{E}\int_{\mathcal {O}_T}\lambda^{-2} \mu^{-2} \xi^{-2}  \theta ^{-2} (|\nabla \tilde{r}|^2+\tilde{R}^2)\mathrm{d}x\mathrm{d}t\\
&  +\epsilon\mathbb{E}\int_{\mathcal {O}'_T} \lambda^{-3} \mu^{-4} \xi^{-3}  \theta^{-2} \tilde{v}^2 \mathrm{d}x\mathrm{d}t +C\mathbb{E}\int_{\mathcal {O}'_T}\lambda^{3}\mu^{4}\xi^{3} \theta^2  z^2 \mathrm{d}x\mathrm{d}t.
\end{split}
\end{equation}
First, by applying the Carleman estimate \eqref{umd} with $r_T=0$, the fifth, eighth and ninth terms on the R.H.S. of \eqref{7.17} can be bounded by $\epsilon (\mathbb{E}\int_{\mathcal {O}_T} \lambda^{3}\mu^{4}\xi^{3} \theta^2  z^2 \mathrm{d}x\mathrm{d}t+\mathbb{E}\int_{\mathcal {O}_T} \lambda\mu^{2} \xi\theta^2 |\nabla z|^2 \mathrm{d}x\mathrm{d}t)$. Then, by choosing small $\epsilon >0$ in \eqref{7.17}, we get
\begin{equation}\label{7.18}
\begin{split}
&\mathbb{E}\int_{\mathcal {O}_T}\lambda\mu^{2} \xi\theta^2( \lambda^{2}\mu^{2}\xi^{2}  z^2 +    | \nabla z|^2) \mathrm{d}x\mathrm{d}t\\
%&\quad \leq %-\mathbb{E}\int_\mathcal {O} z_0(x)\tilde{r}(x,0) \mathrm{d}x\mathrm{d}t%&= z \mathrm{d} \tilde{r}+ \tilde{r} \mathrm{d}z+ \mathrm{d} \tilde{r} \mathrm{d}z\\
% \mathbb{E}\int_{\mathcal {O}_T} |\left(\langle \tilde{r}\textbf{a}, \nabla z\rangle+ \alpha \tilde{r}z+ \tilde{r}\phi_1-\textbf{b}\cdot\nabla\tilde{r} \right)|\mathrm{d}x\mathrm{d}t\\
%  &\quad\quad+ \mathbb{E}\int_{\mathcal {O}_T}|\tilde{R}\phi_2|\mathrm{d}x\mathrm{d}t+\mathbb{E}\int_0^T\int_{\mathcal {O}'} |\tilde{v} z|\mathrm{d}x \mathrm{d}t\\
&  \leq C \lambda^{-1}\mu^{-2}
\mathbb{E}\int_{\mathcal {O}_T} ( \xi^{-1}\theta^{-2}  +   \xi^{-3} \theta^{-2})  \tilde{r}^2\mathrm{d}x\mathrm{d}t+C\mathbb{E}\int_{\mathcal {O}_T} \theta^{2}  \phi_1^2\mathrm{d}x\mathrm{d}t\\
& + C\mathbb{E}\int_{\mathcal {O}_T} \lambda^{2}\mu^{2}\xi^{2} \theta^{2}  (|\textbf{b}|^2+ \phi_2^2)\mathrm{d}x\mathrm{d}t +C\mathbb{E}\int_{\mathcal {O}'_T} \lambda^{3}\mu^{4}\xi^{3} \theta^2  z^2 \mathrm{d}x\mathrm{d}t.
\end{split}
\end{equation}
Since $\|\xi^{-1}\|_{L^\infty}\leq 1$ and $r_T=0$, one sees that, by   \eqref{umd}, the first term on the R.H.S. of \eqref{7.18} can be absorbed by taking $\lambda,\mu>1$ large enough, namely, we can obtain
\begin{equation}\label{wooo}
\begin{split}
&\mathbb{E}\int_{\mathcal {O}_T}\lambda\mu^{2} \xi\theta^2( \lambda^{2}\mu^{2}\xi^{2}  z^2 +    | \nabla z|^2) \mathrm{d}x\mathrm{d}t\leq C\mathbb{E}\int_{\mathcal {O}'_T} \lambda^{3}\mu^{4}\xi^{3} \theta^2  z^2 \mathrm{d}x\mathrm{d}t \\
&+C\mathbb{E}\int_{\mathcal {O}_T}  \theta^{2}  \phi_1^2\mathrm{d}x\mathrm{d}t + C\mathbb{E}\int_{\mathcal {O}_T} \lambda^{2}\mu^{2}\xi^{2} \theta^{2} ( |\textbf{b}|^2 +\phi_2^2)\mathrm{d}x\mathrm{d}t.
\end{split}
\end{equation}
Applying the It\^{o} formula to $\lambda\mu^2\xi\theta^2 z^2$ and integrating by parts over $\mathcal {O}_T$, we obtain
\begin{fontsize}{9pt}{9pt}\begin{equation}\label{wo}
\begin{split}
&\mathbb{E}\int_\mathcal {O}\lambda\mu^2(\xi\theta^2)(T)z^2(T)\mathrm{d}x+\mathbb{E}\int_{\mathcal {O}_T}2\lambda\mu^2\xi\theta^2 (\mathcal {A}\nabla z)\cdot\nabla z \mathrm{d}x\mathrm{d}t %=&\lambda\mu^2(\xi\theta^2)_t z^2 dt + 2\lambda\mu^2\xi\theta^2 z dz + \lambda\mu^2\xi\theta^2 (dz)^2\\
\\
&  =  \mathbb{E}\int_{\mathcal {O}_T}2\lambda\mu^2\xi\theta^2 z   \langle \textbf{a}, \nabla z\rangle \mathrm{d}x\mathrm{d}t+\mathbb{E}\int_{\mathcal {O}_T}2\lambda\mu^2\alpha\xi\theta^2 z^2 \mathrm{d}x\mathrm{d}t+\mathbb{E}\int_{\mathcal {O}_T}2\lambda\mu^2\xi\theta^2 z \phi_1 \mathrm{d}x\mathrm{d}t\\
&  +  \mathbb{E}\int_{\mathcal {O}_T}2\lambda\mu^2\nabla(\xi\theta^2) \cdot  \textbf{b}z \mathrm{d}x\mathrm{d}t+  \mathbb{E}\int_{\mathcal {O}_T}2\lambda\mu^2\xi\theta^2 \textbf{b}\cdot\nabla z \mathrm{d}x\mathrm{d}t+ \mathbb{E}\int_{\mathcal {O}_T}\lambda\mu^2\xi\theta^2 \phi_2^2\mathrm{d}x\mathrm{d}t\\
&   +\mathbb{E}\int_{\mathcal {O}_T}\lambda\mu^2(\xi\theta^2)_t z^2 \mathrm{d}x\mathrm{d}t- \mathbb{E}\int_{\mathcal {O}_T}2\lambda \mu^2 z\nabla(\xi\theta^2) \cdot\nabla z \mathrm{d}x\mathrm{d}t   = J_1+\cdots+J_8.
\end{split}
\end{equation}
\end{fontsize}
By using the Young inequality, the terms $J_1$-$J_5$ can be estimated as
\begin{equation*}
\begin{split}
&J_1+J_2
%\leq& \|\textbf{a}\|_{L^\infty_\mathbb{F}(0,T;L^\infty(\mathcal {O};\mathbb{R}^n))}^2\mathbb{E}\int_{\mathcal {O}_T}2\lambda\mu^2\xi\theta^2 |z|   |\nabla z| \mathrm{d}x\mathrm{d}t+ \|\alpha\|_{L^\infty_\mathbb{F}(0,T;L^\infty(\mathcal {O} ))}^2\mathbb{E}\int_{\mathcal {O}_T} 2\lambda\mu^2\xi\theta^2 z^2 \mathrm{d}x\mathrm{d}t
 \leq  \delta\mathbb{E}\int_{\mathcal {O}_T} \lambda\mu^{2} \xi\theta^2 |\nabla z|^2 \mathrm{d}x\mathrm{d}t\\
 &+ C \big(\|\textbf{a}\|_{L^\infty_\mathbb{F}(0,T;L^\infty(\mathcal {O};\mathbb{R}^n))}^2+\|\alpha\|_{L^\infty_\mathbb{F}(0,T;L^\infty(\mathcal {O} ))}^2\big)\mathbb{E}\int_{\mathcal {O}_T} \lambda\mu^{2} \xi\theta^2 z^2 \mathrm{d}x\mathrm{d}t,\\
&J_3\leq 4 \mathbb{E}\int_{\mathcal {O}_T}\lambda^2\mu^4\xi^2\theta^2 z^2 \mathrm{d}x\mathrm{d}t+\mathbb{E}\int_{\mathcal {O}_T} \theta^2  \phi_1^2 \mathrm{d}x\mathrm{d}t,\\
&J_4\leq C\Big( \mathbb{E}\int_{\mathcal {O}_T} \lambda^2\mu^4 \xi^2\theta^2  z^2 \mathrm{d}x\mathrm{d}t+ \mathbb{E}\int_{\mathcal {O}_T} \lambda^{2}\mu^{2}\xi^{2} \theta^{2}  |\textbf{b}|^2\mathrm{d}x\mathrm{d}t\Big),\\
&J_5\leq \delta \mathbb{E}\int_{\mathcal {O}_T} \lambda\mu^{2} \xi\theta^2 |\nabla z|^2 \mathrm{d}x\mathrm{d}t+ C\mathbb{E}\int_{\mathcal {O}_T} \lambda\mu^2\xi\theta^2 |\textbf{b}|^2\mathrm{d}x\mathrm{d}t,
\end{split}
\end{equation*}
where the estimate for $J_4$ used the fact of $|\nabla(\xi\theta^2)|=|\mu\xi\theta^2 \nabla \beta+2\lambda\mu\xi^2\theta^2\nabla \beta|\leq C \lambda\mu\xi^2\theta^2 $. Owing to this last property, we also have
\begin{equation*}
\begin{split}
J_8\leq \delta \mathbb{E}\int_{\mathcal {O}_T}\lambda \mu^2\xi \theta^2 |\nabla z|^2 \mathrm{d}x\mathrm{d}t+C\mathbb{E}\int_{\mathcal {O}_T} \lambda^3\mu^4 \xi^3\theta^2 z^2 \mathrm{d}x\mathrm{d}t.
\end{split}
\end{equation*}
To estimate $J_7$, we note that
$
(\xi\theta^2)_t= \frac{\gamma_t}{\gamma} \xi\theta^2+ 2\frac{\gamma_t}{\gamma}\lambda\xi\theta^2 \varphi$, for $ t \in (0,T/2]\cup [3T/4,T].
$
On the one hand, since $\gamma_t>0$ on $[3T/4,T]$ and $\varphi<0$, we have $(\xi\theta^2)_t\leq \frac{\gamma_t}{\gamma} \xi\theta^2 \leq C \xi^2\theta^2$; On the other hand, as $|\gamma_t|\leq C \gamma^2$  on $(0,T/2]$, we get $|(\xi\theta^2)_t|\leq C \lambda\mu\xi^3\theta^2$. In both of cases, we deduce that
$
J_7\leq C \mathbb{E}\int_{\mathcal {O}_T}\lambda^2\mu^3 \xi^3\theta^2 z^2 \mathrm{d}x\mathrm{d}t.
$

Putting the above estimates for $J_1$-$J_8$ together and taking $\delta>0$ small enough, we get from \eqref{wo}, the fact of $\|\xi^{-1}\|_{L^\infty}<\infty$ and the assumption (A$_1$) that
\begin{equation*} 
\begin{split}
&\mathbb{E}\int_\mathcal {O}\lambda\mu^2(\xi\theta^2)(T)z^2(T)\mathrm{d}x+ \mathbb{E}\int_{\mathcal {O}_T}\lambda\mu^2\xi\theta^2 |\nabla z|^2\mathrm{d}x\mathrm{d}t %=&\lambda\mu^2(\xi\theta^2)_t z^2 dt + 2\lambda\mu^2\xi\theta^2 z dz + \lambda\mu^2\xi\theta^2 (dz)^2\\
  \\
  &\leq C\Big[\mathbb{E}\int_{\mathcal {O}_T} \lambda^3\mu^4 \xi^3\theta^2 z^2 \mathrm{d}x\mathrm{d}t+ \mathbb{E}\int_{\mathcal {O}_T} \theta^{2}\big[  \phi_1^2  + \lambda^{2}\mu^{2}\xi^{2}    (\phi_2^2  +    |\textbf{b}|^2) \big] \mathrm{d}x\mathrm{d}t \Big].
\end{split}
\end{equation*} 
Multiplying both sides of \eqref{wooo} by $2C$ and then adding to the last inequality, we get the desired Carleman estimates \eqref{carleman2}. The proof of Theorem \ref{thm2} is completed.
\end{proof}

\subsection{The linear controlled system}

Based on  the Carleman estimate obtained in Theorem \ref{thm2}, one can now prove the null controllability for  \eqref{4.1}.
\begin{proof}[Proof of Theorem \ref{lem4.1}] Consider the weighted function $\theta_\epsilon$ defined by \eqref{7.3}. Apparently, $\theta \theta_\epsilon^{-1}\leq1$ for all $(x,t)\in \mathcal {O}_T$, $\gamma(t)\geq\gamma_{\epsilon}(t)$ for any $t\in (0,T]$, and $\theta_\epsilon(T)\neq0$.
Let us define the cost functional $J_\epsilon(\cdot): L^2_\mathbb{F}(0,T;L^2(\mathcal {O}')) \mapsto \mathbb{R}_+$ by
\begin{equation}
\begin{split}
&J_\epsilon(u)= \frac{1}{2\epsilon}\mathbb{E}\int_{\mathcal {O}} y^2(0)\mathrm{d}x+\frac{1}{2}\mathbb{E}\int_{\mathcal {O}_T} \theta_\epsilon^{-2} y^2 \mathrm{d}x\mathrm{d}t\\
&+\frac{1}{2}\mathbb{E}\int_{\mathcal {O}_T} \lambda ^{-2}\mu ^{-2}\xi ^{-3} \theta_\epsilon^{-2} |\nabla y|^2 \mathrm{d}x\mathrm{d}t+\frac{1}{2}\mathbb{E}\int_{\mathcal {O}'_T} \lambda^{-3} \mu^{-4} \xi^{-3} \theta ^{-2}u^2 \mathrm{d}x\mathrm{d}t.
\end{split}
\end{equation}
Then we introduce the  extremal problem:
$\min_{u\in \mathcal {H}} J_\epsilon(u)$ subject to \eqref{4.1}, where
$
\mathcal {H}=\{u\in L^2_\mathbb{F}(0,T;L^2(\mathcal {O}'));\mathbb{E}\int_{\mathcal {O}'_T} \lambda^{-3} \mu^{-4} \xi^{-3} \theta ^{-2}u^2 \mathrm{d}x\mathrm{d}t<\infty\}.
$
It can be readily seen that the functional $J_\epsilon(u)$ is convex, continuity and coercive over $\mathcal {H}$ (cf. \cite{luzhang2021}), which implies that the above optimal control problem admits a unique optimal control $\hat{u}_\epsilon$. The corresponding solution to the controlled system \eqref{4.1} is denoted by $(\hat{y}_\epsilon,\hat{Y}_\epsilon)$. 

By using the duality argument and Euler-Lagrange principle, the control $\hat{u}_\epsilon$ can be 
characterized by
$
\hat{u}_\epsilon= \lambda^{3} \mu^{4} \xi^{3} \theta ^{2}v_\epsilon \textbf{1}_{\mathcal {O}'} ,
$
where $ v_\epsilon $ solves the equation 
\begin{equation}\label{4.7}
\left\{
\begin{aligned}
&\mathrm{d} v_\epsilon- \nabla\cdot(\mathcal {A}\nabla v_{\epsilon}) \mathrm{d}t= [-\alpha v_\epsilon+\theta_{\epsilon}^{-2}\hat{y}_\epsilon+\nabla\cdot(\textbf{a}v_\epsilon - \lambda^{-2} \mu^{-2} \xi^{-3} \theta_{\epsilon}^{-2}\nabla \hat{y}_\epsilon )]\mathrm{d}t ~\textrm{in}~\mathcal {O}_T,\\
&v_\epsilon=0~\textrm{on}~\Sigma_T,\quad
v_\epsilon(0) = \frac{1}{\epsilon}\hat{y}_\epsilon(0) ~\textrm{in}~\mathcal {O}.
\end{aligned}
\right.
\end{equation}
Here $\hat{y}_\epsilon \in \mathcal {W}_T$ denotes the unique solution to \eqref{4.1} associated with the control   $\hat{u}_\epsilon$. Observing that \eqref{4.7} can be viewed as a special case of \eqref{2.1}. Applying   It\^{o}'s formula to $\hat{y}_\epsilon v_\epsilon$ and integrating by parts over $\mathcal {O}_T$, we get from \eqref{4.1} and \eqref{4.7} that
%\begin{equation*}
%\begin{split}
%\frac{1}{\epsilon}\mathbb{E}\int_{\mathcal {O}} |\hat{y}_\epsilon(T) |^2 \mathrm{d}x%=&\hat{y}_\epsilon \mathrm{d} v_\epsilon + v_\epsilon\mathrm{d} \hat{y}_\epsilon+\mathrm{d}\hat{y}_\epsilon \mathrm{d} v_\epsilon\\
%= &\mathbb{E}\int_{\mathcal {O}_T}  \big(-\theta_{\epsilon}^{-2}\hat{y}_\epsilon^2  - \lambda^{-2} \mu^{-2} \xi^{-3} \theta_{\epsilon}^{-2}|\nabla\hat{y}_\epsilon|^2\big)\mathrm{d}x\mathrm{d}t \\
%&+ \mathbb{E}\int_{\mathcal {O}_T} \left( \Phi v_\epsilon-\textbf{1}_{\mathcal {O}'}\lambda^{3} \mu^{4} \xi^{3} \theta_{\epsilon}^{2}v_\epsilon^2 \right)\mathrm{d}x\mathrm{d}t\\
%&-\mathbb{E}\int_{\mathcal {O}_T} \lambda^{2} \mu^{2} \xi^{3} \theta_{\epsilon}^{2}V_\epsilon ^2\mathrm{d}x\mathrm{d}t
%\end{split}
%\end{equation*}
\begin{equation}\label{4.8}
\begin{split}
&\mathbb{E}\int_{\mathcal {O}_T}  \theta_{\epsilon}^{-2}(\hat{y}_\epsilon^2   +  \lambda^{-2} \mu^{-2} \xi^{-3}  |\nabla\hat{y}_\epsilon|^2) \mathrm{d}x\mathrm{d}t+\mathbb{E}\int_{\mathcal {O}'_T} \lambda^{-3} \mu^{-4} \xi^{-3} \theta ^{-2}\hat{u}_\epsilon^2 \mathrm{d}x\mathrm{d}t\\
&   + \frac{1}{\epsilon}\mathbb{E}\int_{\mathcal {O}} \hat{y}_\epsilon^2 (0) \mathrm{d}x= \mathbb{E}\int_{\mathcal {O}}y_Tv_\epsilon(T)\mathrm{d}x-\mathbb{E}\int_{\mathcal {O}_T} \phi v_\epsilon \mathrm{d}x\mathrm{d}t+ \mathbb{E}\int_{\mathcal {O}_T} \textbf{b} \cdot\nabla v_\epsilon \mathrm{d}x\mathrm{d}t.
\end{split}
\end{equation}
By using the Young inequality, we have for any $\delta>0$
\begin{equation} \label{4.9}
\begin{split}
&\textrm{R.H.S. of \eqref{4.8}} \leq \delta \Big(\mathbb{E}\int_\mathcal {O} \lambda \mu^2 (\xi \theta ^2)(T) v_\epsilon^2(T) \mathrm{d}x+  \mathbb{E}\int_{\mathcal {O}_T} \lambda \mu^2 \xi \theta ^2 |\nabla v_\epsilon|^2\mathrm{d}x\mathrm{d}t\Big)\\
&+ C\mathbb{E}\int_{\mathcal {O}_T} \lambda^{-1} \mu^{-2} \xi^{-1} \theta ^{-2}|\textbf{b}|^2\mathrm{d}x\mathrm{d}t
+ C\mathbb{E}\int_{\mathcal {O}_T} \lambda^{-3} \mu^{-4} \xi^{-3} \theta ^{-2}\phi^2\mathrm{d}x \mathrm{d}t  \\&+ C\mathbb{E}\int_\mathcal {O} \lambda^{-1} \mu^{-2} (\xi^{-1} \theta ^{-2})(T)  y_T^2 \mathrm{d}x .
\end{split}
\end{equation}
Let us  apply the   Carleman estimate in \eqref{carleman2} to \eqref{4.7} (with $\phi_1=\theta_{\epsilon}^{-2} \hat{y}_\epsilon $, $\textbf{b}=\textbf{a} v_\epsilon- \lambda^{-2} \mu^{-2} \xi^{-3} \theta_{\epsilon}^{-2}\nabla\hat{y}_\epsilon$ and $\phi_2=0$),  we find that for any $\lambda,\mu>1$ large enough 
\begin{equation} \label{4.10}
\begin{split}
&\mathbb{E}\int_\mathcal {O}\lambda\mu^2(\xi \theta^2)(T)v_\epsilon^2(T)dx+\mathbb{E}\int_{\mathcal {O}_T} \lambda \mu^2 \xi \theta^2  (\lambda^2 \mu^2 \xi^2  v_\epsilon^2 +|\nabla v_\epsilon|^2)\mathrm{d}x\mathrm{d}t\\
&\leq   C \mathbb{E} \int_{\mathcal {O}'_T}\lambda ^{-3}\mu ^{-4}\theta^{-2} \xi ^{-3}\hat{u}_\epsilon^2 \mathrm{d}x \mathrm{d}t + C\mathbb{E}\int_{\mathcal {O}_T}  \theta_{\epsilon}^{-2}(\hat{y}_\epsilon^2   +  \lambda^{-2} \mu^{-2} \xi^{-3}  |\nabla\hat{y}_\epsilon|^2) \mathrm{d}x\mathrm{d}t ,
\end{split}
\end{equation}
where the R.H.S. of \eqref{4.10} used the fact of $\theta\theta_{\epsilon}^{-1}\leq 1$ and $\|\xi^{-1}\|_{L^\infty}<1$.

Putting \eqref{4.10} into \eqref{4.9} and choosing $\delta >0$ small enough, we obtain
\begin{equation}\label{4.11}
\begin{split}
&\mathbb{E}\int_{\mathcal {O}_T}  \theta_{\epsilon}^{-2}(\hat{y}_\epsilon^2   +  \lambda^{-2} \mu^{-2} \xi^{-3}  |\nabla\hat{y}_\epsilon|^2) \mathrm{d}x\mathrm{d}t+\mathbb{E} \int_{\mathcal {O}'_T} \lambda^{-3} \mu^{-4} \xi^{-3} \theta ^{-2}\hat{u}_\epsilon^2 \mathrm{d}x\mathrm{d}t \\
&+ \frac{1}{\epsilon}\mathbb{E}\int_{\mathcal {O}} \hat{y}_\epsilon^2 (0) \mathrm{d}x \leq C\mathbb{E}\int_\mathcal {O} \lambda^{-1} \mu^{-2} (\xi^{-1} \theta ^{-2})(T)  y_T^2 \mathrm{d}x\\
&+ C\mathbb{E}\int_{\mathcal {O}_T} \lambda^{-3} \mu^{-4} \xi^{-3} \theta ^{-2}\phi^2\mathrm{d}x \mathrm{d}t + C\mathbb{E}\int_{\mathcal {O}_T} \lambda^{-1} \mu^{-2} \xi^{-1} \theta ^{-2}|\textbf{b}|^2\mathrm{d}x\mathrm{d}t,
\end{split}
\end{equation}
for any   $\lambda,\mu>1$ large enough.

To obtain a suitable estimate for the component $\hat{Y}_\epsilon$, let us apply the It\^{o} formula to $\lambda^{-2} \mu^{-2} \xi^{-2} \theta ^{-2}\hat{y}_\epsilon^2$ and integrate by parts over $\mathcal {O}_T$. This leads us to infer that
\begin{equation} \label{jj3}
\begin{split}
&\mathbb{E}\int_{\mathcal {O}_T} \lambda^{-2} \mu^{-2} \xi^{-2} \theta_\epsilon^{-2}\hat{Y}_\epsilon^2  \mathrm{d}x\mathrm{d}t+  2c_0\mathbb{E}\int_{\mathcal {O}_T} \lambda^{-2} \mu^{-2} \xi^{-2} \theta_\epsilon^{-2}|\nabla\hat{y}_\epsilon|^2  \mathrm{d}x\mathrm{d}t\\
&  \leq -\mathbb{E}\int_{\mathcal {O}_T} \lambda^{-2} \mu^{-2} \big[(\xi^{-2} \theta_\epsilon^{-2})_t\hat{y}_\epsilon^2  +2 \hat{y}_\epsilon \nabla( \xi^{-2}\theta_\epsilon^{-2})\cdot (\mathcal {A}\nabla\hat{y}_\epsilon)  \big]  \mathrm{d}x\mathrm{d}t\\
&   +\mathbb{E}\int_{\mathcal {O}_T} \lambda^{-2} \mu^{-2} \xi^{-2} \theta_\epsilon^{-2}\big[\hat{y}_\epsilon^2  -\hat{y}_\epsilon (\langle \textbf{a}, \nabla \hat{y}_\epsilon\rangle+ \phi  + \nabla\cdot \textbf{b} )\big] \mathrm{d}x\mathrm{d}t\\
& -\mathbb{E} \int_{\mathcal {O}'_T} \lambda^{-2} \mu^{-2} \xi^{-2} \theta_\epsilon^{-2}\hat{y}_\epsilon \hat{u}_\epsilon \mathrm{d}x\mathrm{d}t   +\mathbb{E}\int_{\mathcal {O}_T} \lambda^{-2} \mu^{-2}  \theta_\epsilon^{-2}(T)y_T^2  \mathrm{d}x\mathrm{d}t.
\end{split}
\end{equation}

We now proceed to estimate the integral terms on the R.H.S. of \eqref{jj3}: (\textbf{1})  By adopting an argument analogous to the one   presented in \eqref{7.9}, we   deduce that
%\begin{equation*} 
%\begin{split}
$-\mathbb{E}\int_{\mathcal {O}_T}\lambda^{-2} \mu^{-2} (\xi^{-2}  \theta_\epsilon^{-2})_t \hat{y}_\epsilon^2 \mathrm{d}x\mathrm{d}t \leq C\lambda ^{-1}\mu ^{-1}\mathbb{E}\int_0^{3T/4} \int_{\mathcal {O}}\theta_\epsilon^{-2} \hat{y}_\epsilon^2 \mathrm{d}x\mathrm{d}t$.
%\end{split}
%\end{equation*}
(\textbf{2}) From the definition of weighted functions, we see that $ |\nabla(\xi^{-2}  \theta_\epsilon^{-2})| \leq C  \lambda\mu \xi^{-1}\theta_\epsilon ^{-2}$, which implies 
%\begin{equation*}
%\begin{split}
$ -\mathbb{E}\int_{\mathcal {O}_T} 2\lambda^{-2} \mu^{-2}\hat{y}_\epsilon \nabla( \xi^{-2}\theta_\epsilon^{-2})\cdot (\mathcal {A}\nabla\hat{y}_\epsilon)  \mathrm{d}x\mathrm{d}t
  \leq \frac{c_0}{8}\mathbb{E}\int_{\mathcal {O}_T} \lambda^{-2} \mu^{-2} \xi^{-2} \theta_\epsilon^{-2}|\nabla\hat{y}_\epsilon|^2  \mathrm{d}x\mathrm{d}t + C\mathbb{E}\int_{\mathcal {O}_T} \theta_\epsilon^{-2}  \hat{y}_\epsilon ^2  \mathrm{d}x\mathrm{d}t$.
%\end{split}
%\end{equation*}
(\textbf{3}) By using the Young inequality and boundedness of $\textbf{a}$, we get
$-\mathbb{E}\int_{\mathcal {O}_T} \lambda^{-2} \mu^{-2} \xi^{-2} \theta_\epsilon^{-2}\hat{y}_\epsilon(\langle \textbf{a}, \nabla \hat{y}_\epsilon\rangle+ \phi) \mathrm{d}x\mathrm{d}t\leq \frac{c_0}{8}\mathbb{E}\int_{\mathcal {O}_T} \lambda^{-2} \mu^{-2} \xi^{-2} \theta_\epsilon^{-2}|\nabla\hat{y}_\epsilon|^2  \mathrm{d}x\mathrm{d}t +C \mathbb{E}\int_{\mathcal {O}_T} \lambda^{-3} \mu^{-4} \xi^{-3} \theta_\epsilon ^{-2}\phi^2\mathrm{d}x \mathrm{d}t+ C(\lambda^{-1}+\lambda^{-2} \mu^{-2})\mathbb{E}\int_{\mathcal {O}_T} \theta_\epsilon^{-2} \hat{y}_\epsilon^2  \mathrm{d}x\mathrm{d}t$.
%\end{split}
%\end{equation*}
(\textbf{4}) Integrating by parts and using the fact of $ \theta_\epsilon^{-1}\leq \theta^{-1}$, there holds
%\begin{equation*} 
%\begin{split}
$-\mathbb{E}\int_{\mathcal {O}_T} \lambda^{-2} \mu^{-2} \xi^{-2} \theta_\epsilon^{-2}\hat{y}_\epsilon \nabla\cdot \textbf{b} \mathrm{d}x\mathrm{d}t  \leq \frac{c_0}{8}\mathbb{E}\int_{\mathcal {O}_T} \lambda^{-2} \mu^{-2} \xi^{-2} \theta_\epsilon^{-2}|\nabla\hat{y}_\epsilon|^2  \mathrm{d}x\mathrm{d}t + C \mathbb{E}\int_{\mathcal {O}_T} \lambda^{-1} \theta ^{-2}(\mu^{-2} \xi^{-1} |\textbf{b}|^2  + \hat{y}_\epsilon^2 ) \mathrm{d}x\mathrm{d}t$.
%\end{split}
%\end{equation*}
(\textbf{5}) Using the fact of $\|\xi^{-1}\|_{L^\infty}<\infty$, one can derive that
%\begin{equation*}
%\begin{split}
$-\mathbb{E} \int_{\mathcal {O}'_T} \lambda^{-2} \mu^{-2} \xi^{-2} \theta_\epsilon^{-2}\hat{y}_\epsilon \hat{u}_\epsilon \mathrm{d}x\mathrm{d}t \leq  \mathbb{E} \int_{\mathcal {O}_T}  \theta_\epsilon^{-2}\hat{y}_\epsilon^2 \mathrm{d}x\mathrm{d}t+ \mathbb{E}\int_{\mathcal {O}'_T} \lambda^{-4} \mu^{-4} \xi^{-3} \theta_\epsilon^{-2} \hat{u}_\epsilon^2 \mathrm{d}x\mathrm{d}t$. 

After inserting the above estimates explored in the steps (\textbf{1})-(\textbf{5}) into \eqref{jj3} and absorbing the gradient term $\mathbb{E}\int_{\mathcal {O}_T} \lambda^{-2} \mu^{-2} \xi^{-2} \theta_\epsilon^{-2}|\nabla\hat{y}_\epsilon|^2  \mathrm{d}x\mathrm{d}t$ on the R.H.S. of the resulted inequality, we arrive at
\begin{equation*}
\begin{split}
&\mathbb{E}\int_{\mathcal {O}_T} \lambda^{-2} \mu^{-2} \xi^{-2} \theta_\epsilon^{-2}(\hat{Y}_\epsilon^2   +    |\nabla\hat{y}_\epsilon|^2)  \mathrm{d}x\mathrm{d}t\leq C \Big(\mathbb{E}\int_{\mathcal {O}_T} \lambda^{-1} \mu^{-2} \xi^{-1} \theta ^{-2}|\textbf{b}|^2  \mathrm{d}x\mathrm{d}t\\
&+  \mathbb{E} \int_{\mathcal {O}_T}  \theta ^{-2} \hat{y}_\epsilon^2 \mathrm{d}x\mathrm{d}t +\mathbb{E}\int_{\mathcal {O}_T} \lambda^{-3} \mu^{-4} \xi^{-3} \theta  ^{-2}\phi^2\mathrm{d}x \mathrm{d}t+\mathbb{E}\int_{\mathcal {O}'_T} \lambda^{-3} \mu^{-4} \xi^{-3} \theta ^{-2} \hat{u}_\epsilon^2 \mathrm{d}x\mathrm{d}t \Big),
\end{split}
\end{equation*}
which combined with the estimate \eqref{4.11} yield that
\begin{equation}\label{jj4}
\begin{split}
&\mathbb{E}\int_{\mathcal {O}_T}  \theta_{\epsilon}^{-2}(\hat{y}_\epsilon^2   +  \lambda^{-2} \mu^{-2} \xi^{-3}  |\nabla\hat{y}_\epsilon|^2 +  \lambda^{-2} \mu^{-2} \xi^{-2}  \hat{Y}_\epsilon^2)  \mathrm{d}x\mathrm{d}t+ \frac{1}{\epsilon}\mathbb{E}\int_{\mathcal {O}} \hat{y}_\epsilon ^2(0) \mathrm{d}x\\
&  +\mathbb{E} \int_{\mathcal {O}'_T} \lambda^{-3} \mu^{-4} \xi^{-3} \theta ^{-2}\hat{u}_\epsilon^2 \mathrm{d}x\mathrm{d}t  \leq C\mathbb{E}\int_\mathcal {O} \lambda^{-1} \mu^{-2} (\xi^{-1} \theta^{-2}) (T)  y_T^2 \mathrm{d}x\\
&+ C\mathbb{E}\int_{\mathcal {O}_T} \lambda^{-3} \mu^{-4} \xi^{-3} \theta ^{-2}\phi^2\mathrm{d}x \mathrm{d}t  + C\mathbb{E}\int_{\mathcal {O}_T} \lambda^{-1} \mu^{-2} \xi^{-1} \theta ^{-2}|\textbf{b}|^2\mathrm{d}x\mathrm{d}t.
\end{split}
\end{equation}

Observing that the R.H.S. of \eqref{jj4} is independent of $\epsilon$, so there exist a subsequence of $(\hat{u}_\epsilon,\hat{y}_\epsilon,\hat{Y}_\epsilon)$ (still denoted by itself) and an element
$(\hat{u},\hat{y},\hat{Y})$ such that as $\epsilon\rightarrow 0$,
$\hat{u}_\epsilon \rightarrow \hat{u}$ weakly in $L^2_\mathbb{F}(0,T;L^2(\mathcal {O}'))$, $\hat{y}_\epsilon \rightarrow \hat{y}$ weakly in $L^2_\mathbb{F}(0,T;H^1_0(\mathcal {O}))$ and $\hat{Y}_\epsilon \rightarrow \hat{Y}$ weakly in $L^2_\mathbb{F}(0,T;L^2_0(\mathcal {O}))$. Following the   argument in Lemma \ref{lem7.1}, we can show that $(\hat{y},\hat{Y})$ is the solution to \eqref{4.1} associated with $\hat{u}$. Due to the uniform boundedness of $\frac{1}{\epsilon}\mathbb{E}\|\hat{y}_\epsilon(0) \|_{L^2(\mathcal {O})}^2$, one can take  $\epsilon \rightarrow0$ to deduce the null controllability.  Finally, since
$\xi^{-1}(T)\leq e^{-6\mu m} $ and $\theta^{-2}(T)\leq e^{2\lambda \mu e^{6\mu (m+1)}}$, the  
 estimate \eqref{4..3} is a consequence of \eqref{jj4}.  This completes the proof of Theorem \ref{lem4.1}.
\end{proof}

\subsection{The semi-linear controlled system}
As a byproduct of Theorem \ref{lem4.1}, we are ready to establish the null controllability of system \eqref{jj6}.

\begin{proof}[Proof of Theorem \ref{thm-nonlinear1}] Given any $\varphi \in L^2_\mathbb{F}(0,T;L^2(\mathcal {O}))$, we consider the system
\begin{equation}\label{jj7}
\left\{
\begin{aligned}
&\mathrm{d} y+\nabla\cdot (\mathcal {A}\nabla y) \mathrm{d}t= \left( \varphi +\textbf{1}_{\mathcal {O}'}u\right)\mathrm{d}t+Y\mathrm{d}W_t ~\textrm{in}~\mathcal {O}_T,\\
&y=0~\textrm{on}~\Sigma_T,\quad
y(T)=y_T~\textrm{in}~\mathcal {O}.
\end{aligned}
\right.
\end{equation}
Thanks to Theorem \ref{lem4.1}, for any $y_T \in L^2_\mathbb{F}(\Omega;L^2(\mathcal {O}))$, there exists     $u\in L^2_\mathbb{F}(0,T;L^2(\mathcal {O}'))$ such that the associated solution $(y, Y)$ to \eqref{jj6} satisfies $
y(0)=0$ in $\mathcal {O}$, $\mathbb{P}$-a.s.

Define  
$
\mathscr{B}_{\lambda,\mu}= \{\varphi\in L^2_\mathbb{F}(0,T;L^2(\mathcal {O}));~\mathbb{E}\int_{\mathcal {O}_T} \lambda^{-3} \mu^{-4} \xi^{-3} \theta ^{-2}\varphi^2\mathrm{d}x \mathrm{d}t<\infty\},
$
which is a Banach space equipped with the canonical norm. We \textit{claim} that the mapping 
\begin{equation}\label{fff}
\begin{split}
\mathscr{K}: \varphi \in \mathscr{B}_{\lambda,\mu} \mapsto F(\omega,t,x,y,\nabla y,Y)\in \mathscr{B}_{\lambda,\mu}
\end{split}
\end{equation}
is well-defined, where $(y,Y)$ denotes the solution to \eqref{jj7} with respect to $u$ and $y_T$, satisfying $y(0)=0$ in $\mathcal {O}$, $\mathbb{P}$-a.s. Indeed, by using assumption (A$_3$) and  \eqref{4..3} in Theorem \ref{lem4.1}, we get that for any fixed parameters $\lambda,\mu>1$ large enough
\begin{equation*}
\begin{split}
&\|\mathscr{K} \varphi\|_{\mathscr{B}_{\lambda,\mu}}%=&\|F(\omega,t,x,y,\nabla y,Y)\|_{\mathscr{B}_{\lambda,\mu}}
%=& \mathbb{E}\int_{\mathcal {O}_T} \lambda^{-3} \mu^{-4} \xi^{-3} \theta ^{-2}|F(\omega,t,x,y,\nabla y,Y)|^2\mathrm{d}x \mathrm{d}t\\
\leq \mathbb{E}\int_{\mathcal {O}_T} \lambda^{-3} \mu^{-4} \xi^{-3} \theta ^{-2}(y^2+|\nabla y|^2+Y^2)\mathrm{d}x \mathrm{d}t\\
&\leq C \frac{\exp\{4\lambda \mu e^{6\mu (m+1)}-6\mu m\}}{\lambda\mu^2} \mathbb{E}\|y_T\|_{L^2}^2+ C\mathbb{E}\int_{\mathcal {O}_T} \lambda^{-3} \mu^{-4} \xi^{-3} \theta ^{-2}\varphi^2\mathrm{d}x \mathrm{d}t
<  \infty.
\end{split}
\end{equation*}

Next, we show that $\mathscr{K}$ is a contraction mapping in $\mathscr{B}_{\lambda,\mu}$. For any $\varphi_1$ and $\varphi_2 \in \mathscr{B}_{\lambda,\mu}$, the corresponding solutions are denoted by $(y_1,U_1)$ and $(y_2,Y_2)$, respectively. Setting $\widetilde{\varphi}=\varphi_1-\varphi_2$, $\widetilde{u}=u_1-u_2$, $\widetilde{y}=y_1-y_2$ and $\widetilde{Y}=Y_1-Y_2$, then
\begin{equation}
\left\{
\begin{aligned}
&\mathrm{d} \widetilde{y}+\nabla\cdot (\mathcal {A}\nabla \widetilde{y}) \mathrm{d}t= \left( \widetilde{\varphi} +\textbf{1}_{\mathcal {O}'}\widetilde{u}\right)\mathrm{d}t+\widetilde{Y}\mathrm{d}W_t ~\textrm{in}~\mathcal {O}_T,\\
&\widetilde{y}=0~\textrm{on}~\Sigma_T,\quad
\widetilde{y}(T) =0~\textrm{in}~\mathcal {O},
\end{aligned}
\right.
\end{equation}
and the first component of the solution $(\widetilde{y},\widetilde{Y})$ satisfies $\widetilde{y}(0)=0$ in $\mathcal {O}$, $\mathbb{P}$-a.s. By using \eqref{4..3}, the  condition (A$_2$) and the fact of $\|\xi^{-1}\|_{L^\infty}<\infty$, we have
\begin{equation*}
\begin{split}
&\|\mathscr{K} \varphi_1-\mathscr{K} \varphi_2\|_{\mathscr{B}_{\lambda,\mu}}%=&\|F(\omega,t,x,y_1,\nabla y_1,Y_1)-F(\omega,t,x,y_2,\nabla y_2,Y_2)\|_{\mathscr{B}_{\lambda,\mu}}\\
%\leq& C\mathbb{E}\int_{\mathcal {O}_T} \lambda^{-3} \mu^{-4} \xi^{-3} \theta ^{-2}(\widetilde{y}^2+|\nabla \widetilde{y}|^2+\widetilde{Y}^2)\mathrm{d}x \mathrm{d}t\\
\leq  C\Big(\lambda^{-3} \mu^{-4} \mathbb{E}\int_{\mathcal {O}_T} \theta ^{-2} \widetilde{y}^2 \mathrm{d}x \mathrm{d}t\\
&+\lambda^{-1} \mu^{-2}\mathbb{E}\int_{\mathcal {O}_T} \lambda^{-2} \mu^{-2} \xi^{-3} \theta ^{-2} (|\nabla \widetilde{y}|^2  +  \widetilde{Y}^2) \mathrm{d}x \mathrm{d}t\Big)\\
&\leq  C (\lambda^{-3} \mu^{-4}+\lambda^{-1} \mu^{-2})\mathbb{E}\int_{\mathcal {O}_T} \lambda^{-3} \mu^{-4} \xi^{-3} \theta ^{-2}\widetilde{\varphi}^2\mathrm{d}x \mathrm{d}t \leq C  \lambda^{-1} \mu^{-2} \|\varphi_1- \varphi_2\|_{\mathscr{B}_{\lambda,\mu}},
\end{split}
\end{equation*}
where $C>0$ is independent of $\lambda,\mu$. By choosing $\lambda,\mu>1$ sufficiently large such that $C \lambda^{-1} \mu^{-2} <1$,   $\mathscr{K}$ is a contraction mapping on $\mathscr{B}_{\lambda,\mu}$. According to the Banach Fixed-point Theorem, $\mathscr{K}$ has a unique fixed-point $\varphi \in \mathscr{B}_{\lambda,\mu}$ such that
$
\mathscr{K}\varphi=F(\omega,t,x,y,\nabla y,Y)=\varphi,
$
where $(y,Y)$ is the solution to \eqref{jj7} associated  to $y_T$ and $\varphi$, such that $y(0)=0$ in $\mathcal {O}$, $\mathbb{P}$-a.s. Therefore, $(y,Y)$ is a solution to  \eqref{jj6} such that the null controllability   holds. The proof of Theorem \ref{thm-nonlinear1} is completed.
\end{proof}

\section{Controllability of forward parabolic  SPDEs}

\subsection{Carleman estimates for backward system}

To prove the Carleman estimates in Theorem \ref{thm4}, let us introduce the controlled system 
\begin{equation}\label{3.1}
\left\{
\begin{aligned}
&\mathrm{d} y-\nabla\cdot(\mathcal {A}\nabla y) \mathrm{d}t= (\lambda^3 \mu^4 \mathring{\xi}^3 \mathring{\theta}^2z+ \textbf{1}_{\mathcal {O}'}u) \mathrm{d}t+ (\lambda^2 \mu^2 \mathring{\xi}^3\mathring{\theta}^2 Z+U) \mathrm{d}W_t~\textrm{in}~\mathcal {O}_T,\\
&y=0 ~\textrm{on}~\Sigma_T,\quad
y(0) =y_0 ~\textrm{in}~\mathcal {O},
\end{aligned}
\right.
\end{equation}
where $y$ denotes the state variable associated with $y_0 \in L^2_{\mathcal {F}_0}(\Omega; L^2(\mathcal {O}))$ and control pair $(u,U)$, and $(z,Z)$ is the unique solution of \eqref{back} with respect to $z_T\in L^2_{\mathcal {F}_T}(\Omega; L^2(\mathcal {O}))$. By using an argument similar to Theorem \ref{thm1} and the original work \cite{tang2009null}, one can establish the following $L^2$-Carleman estimate for \eqref{back} with $\textbf{b}\equiv \textbf{0}$.

\begin{lemma} \label{lem3.0}
For any $T>0$, assume that $\textbf{c}\in L_\mathbb{F}^\infty(0,T;L^\infty(\mathcal {O};\mathbb{R}^n))$, $\rho_1$, $\rho_2\in L_\mathbb{F}^\infty(0,T;L^\infty(\mathcal {O}))$ and $\phi \in L_\mathbb{F}^2(0,T;L^2(\mathcal {O}))$. If $
\textbf{b}\equiv\textbf{0}$ and the assumption (A$_1$) holds, then there exist $\lambda_0>0$ and $\mu_0>0$ such that the unique solution $(z,Z)$ to \eqref{back} with respect to $z_T \in L^2_{\mathcal {F}_T}(\Omega;L^2(\mathcal {O}))$ satisfies
\begin{equation}\label{backcar}
\begin{split}
& \mathbb{E}\int_{\mathcal {O}} e^{2\lambda \varphi (0)}(\lambda^{2} \mu^{3} e^{6 \mu m}  z^2(0)  +  |\nabla z(0)|^2 )\mathrm{d}x +\mathbb{E} \int_{\mathcal {O}_T}\lambda\mu^{2} \mathring{\xi}\mathring{\theta}^2 ( | \nabla z |^2 + \lambda^2\mu^2\mathring{\xi}^2   z^2) \mathrm{d}x\mathrm{d}t \\
&  \leq C \Big(\mathbb{E}\int_{\mathcal {O}'_T} \lambda^{3}\mu^{4} \mathring{\xi}^{3}\mathring{\theta}^2 z^2 \mathrm{d}x\mathrm{d}t+\mathbb{E}\int_{\mathcal {O}_T}\mathring{\theta}^2(\phi ^2+\lambda^{2} \mu^{2}  \mathring{\xi}^{3}Z^2 ) \mathrm{d}x\mathrm{d}t \Big),
\end{split}
\end{equation}
for all $\lambda \geq \lambda_0$ and $\mu \geq \mu_0$.
\end{lemma}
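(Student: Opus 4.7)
My plan is to follow the three-step strategy of the proof of Theorem~\ref{thm1}, modified for the backward setting along the lines of \cite[Theorem~6.1]{tang2009null}. Two substantive differences emerge relative to the forward case: (i) the time orientation is reversed, so that $\mathring\gamma$ is singular at $t=T$ (hence $\mathring\theta(T,\cdot)=0$) rather than at $t=0$, and the ``good'' boundary contribution from the fundamental Carleman identity now appears at $t=0$, furnishing the $(z(0),\nabla z(0))$ terms on the left of \eqref{backcar}; (ii) the diffusion coefficient $Z$ is part of the unknown, which complicates the treatment of the stochastic correction term analogous to $J_5$.

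For Step~1, I set $h=\mathring\theta z$ and repeat the algebraic decomposition $\mathring\theta(\mathrm{d}z+\sum_{i,j}(a^{ij}z_{x_i})_{x_j}\mathrm{d}t)=I_1+I_2\mathrm{d}t$ verbatim, with $\ell$ replaced by $\mathring\ell$ and with the sign adjustment reflecting $+\nabla\cdot(\mathcal A\nabla z)\mathrm{d}t$ in \eqref{back}. The It\^o computation yields the backward analogue of \eqref{2.9}: since $\mathring\theta$ and its relevant derivatives vanish as $t\to T^-$ by the singularity of $\mathring\gamma$, the temporal integration by parts leaves only the $t=0$ boundary contribution, which by the analogue of \eqref{2.10}--\eqref{2.13} with $|\mathring\ell_t(0)|\geq C\lambda^2\mu^3 e^{6\mu m}$ bounds below the left-hand side of \eqref{backcar}.

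In Step~2, the bulk estimates for $J_2,J_3,J_4$ proceed identically to Theorem~\ref{thm1}, since the spatial derivative structure of $\mathring\ell$ and the dominant cubic coefficient $6(a^{ij}\beta_{x_i}\beta_{x_j})^2\lambda^3\mu^4\mathring\xi^3$ in the $A$-term are unchanged, while the $\mathring\gamma_t/\mathring\gamma$ contributions split over $[0,T/4]\cup[T/4,3T/4]\cup[3T/4,T)$ with the nontrivial sign now occurring near the singular endpoint $t=T$ (where $\mathring\gamma_t\geq 0$ and $\mathring\varphi\leq 0$), hence absorbable by the high-order Carleman weights on the left. The additional drift terms $\langle\textbf c,\nabla z\rangle+\rho_1 z+\rho_2 Z$ are controlled on the left of the fundamental identity by Young's inequality and the $L^\infty$-bounds on $\textbf c,\rho_1,\rho_2$, producing at most $C\mathbb E\int_{\mathcal O_T}\mathring\theta^2(|\nabla z|^2+z^2+Z^2)\mathrm{d}x\mathrm{d}t$ on the right; the $|\nabla z|^2$ and $z^2$ summands are swallowed by the left-hand Carleman terms for $\lambda,\mu\gg 1$, leaving only $\phi^2$ and a $Z^2$ contribution.

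The main obstacle, and the only genuinely new ingredient, is the stochastic correction term analogous to $J_5$: since the diffusion is $Z\mathrm{d}W_t$ with $Z$ unknown, $\mathrm{d}h_{x_i}$ contains $\mathring\theta(\mathring\ell_{x_i}Z+Z_{x_i})\mathrm{d}W_t$, and the naive computation would yield an uncontrolled $|\nabla Z|^2$ term on the right. I would circumvent this following \cite[Theorem~6.1]{tang2009null} by selecting the multiplier so that the relevant It\^o correction enters only through the scalar quantity $\mathring{\mathcal L}h^2$, whose quadratic variation is $-\mathring\theta^2\mathring{\mathcal L}\,Z^2\mathrm{d}t$; the expansion \eqref{2.18} of $\mathring{\mathcal L}$ bounds this pointwise by $C\lambda^2\mu^2\mathring\xi^3\mathring\theta^2 Z^2$ (using $\mathring\xi\geq 1$), matching exactly the $Z^2$ integral on the right-hand side of \eqref{backcar}. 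A final cut-off argument in the spirit of \eqref{2.35}--\eqref{2.40}, with $\zeta\in\mathcal C_0^\infty(\mathcal O';[0,1])$ equal to $1$ on $\mathcal O_1$, absorbs the residual local gradient term into $\mathbb E\int_{\mathcal O'_T}\lambda^3\mu^4\mathring\xi^3\mathring\theta^2 z^2\mathrm{d}x\mathrm{d}t$ plus the $\phi^2,Z^2$ source terms, yielding \eqref{backcar}.
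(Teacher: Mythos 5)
Your overall architecture is the right one and matches what the paper intends: the paper gives no detailed proof of Lemma~\ref{lem3.0}, asserting only that it follows by the argument of Theorem~\ref{thm1} combined with \cite{tang2009null}, and your plan (set $h=\mathring\theta z$, run the fundamental identity, collect the surviving boundary term at $t=0$, absorb the lower-order drift terms, finish with the cut-off energy argument) is exactly that mirror image. However, two of the mechanisms you describe are wrong as stated and would fail if implemented literally. First, you cannot ``select the multiplier so that the It\^o correction enters only through $\mathring{\mathcal L}h^2$'': any useful multiplier must contain the divergence part $\sum_{i,j}(a^{ij}h_{x_i})_{x_j}$ (it is what generates the principal positive terms), and pairing it with $\mathrm{d}h$ unavoidably produces the quadratic-variation term $\sum_{i,j}a^{ij}\,\mathrm{d}h_{x_i}\,\mathrm{d}h_{x_j}$, whose martingale part does involve $\nabla Z$. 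The reason no $\nabla Z$ appears in \eqref{backcar} is a sign, not a choice of multiplier: because the surviving time-boundary evaluation is now at $t=0$, the backward analogue of \eqref{2.9} carries this quadratic-variation term with a \emph{plus} sign, i.e.\ $+\,\mathbb{E}\int_{\mathcal O_T}\sum_{i,j}a^{ij}\mathring\theta^2(\mathring\ell_{x_i}Z+Z_{x_i})(\mathring\ell_{x_j}Z+Z_{x_j})\,\mathrm{d}x\mathrm{d}t\ge 0$, so it is simply discarded; only the scalar correction $\mathring{\mathcal L}(\mathrm{d}h)^2=\mathring{\mathcal L}\mathring\theta^2Z^2\mathrm{d}t$ survives on the right, and the bound $|\mathring{\mathcal L}|\lesssim \lambda^2\mu^2\mathring\xi^2+|\mathring\ell_t|\lesssim\lambda^2\mu^2\mathring\xi^{3}$ is what produces the cubic weight $\lambda^2\mu^2\mathring\xi^3 Z^2$ in \eqref{backcar}. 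This is the content of the backward estimate in \cite{tang2009null}, but your stated route to it is not viable.

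Second, you have mislocated the delicate part of the $\mathring\gamma_t/\mathring\gamma$ analysis. Near the singular endpoint $t=T$ one has $|\mathring\gamma_t|\le C\mathring\gamma^2$ and $|\mathring\gamma\mathring\varphi|\le\mu\mathring\xi^2$, so those contributions are absorbed by the cubic Carleman weight exactly as on $(0,T/2]$ in the forward proof; no sign argument is needed there. The genuinely nontrivial region is the \emph{non-degenerate} endpoint, i.e.\ $[0,T/4]$, where $|\mathring\gamma_t|$ is of order $\sigma=\lambda\mu^2e^{\mu(6m-4)}$ and cannot be dominated by $\lambda^3\mu^4\mathring\xi^3$; there one must exploit $\mathring\gamma_t\le 0$ together with $\mathring\varphi\le 0$ (the mirror of the forward analysis on $[3T/4,T]$, cf.\ \eqref{2.31} and the paper's own backward computations around \eqref{3.13} and \eqref{3.30}), keep the resulting nonnegative term on the left, and recover the piece localized on $\mathcal O_1\times[0,T/4]$ through the cut-off It\^o argument, i.e.\ the backward analogue of the term $\eqref{2.35}_{33}$. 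Your proposal claims these endpoint contributions are ``absorbable by the high-order Carleman weights,'' which is false on $[0,T/4]$; without the sign-plus-localization step there, the proof does not close. With these two corrections the rest of your plan (boundary term at $t=0$ dominated by $\mathring\ell_t(0)$, Young absorption of $\langle\textbf c,\nabla z\rangle+\rho_1z+\rho_2Z$, final cut-off step) goes through and coincides with the paper's intended argument.
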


%By means of   Lemma \ref{lem3.0} and a duality argument as in Section \ref{sec3}, we have the following   controllability result for \eqref{3.1}.

\begin{lemma}\label{lem3.1}
Let $(z,Z)$ be solution to  \eqref{back} associated with  $z_T\in L^2_{\mathcal{F}_T}(\Omega; L^2(\mathcal {O}))$. Then there exists $(\tilde{u},\tilde{U}) \in L^2_\mathbb{F}(0,T;L^2(\mathcal {O}'))\times L^2_\mathbb{F}(0,T;L^2(\mathcal {O}))$ such that the corresponding solution $\tilde{y}$ to \eqref{3.1} verifies
$
\tilde{y}(T)=0$ in $\mathcal {O}$, $\mathbb{P}$-a.s.
Moreover, there exists a positive constants $C$, $\lambda_0,\mu_0$, depending only on $\mathcal {O},\mathcal {O}'$ and $T$, such that
\begin{equation} \label{3.2}
\begin{split}
& \mathbb{E} \int_{\mathcal {O}'_T} \lambda^{-3} \mu^{-4} \mathring{\xi}^{-3} \mathring{\theta}^{-2}\tilde{u}^2 \mathrm{d}x\mathrm{d}t +\mathbb{E}\int_{\mathcal {O}_T} \mathring{\theta}^{-2} (\tilde{y}^2  + \lambda ^{-2}\mu ^{-2} \mathring{\xi} ^{-3} |\nabla\tilde{y}|^2)   \mathrm{d}x \mathrm{d}t \\
&+ \mathbb{E}\int_{\mathcal {O}_T} \lambda^{-2} \mu^{-2} \mathring{\xi}^{-3} \mathring{\theta}^{-2}\tilde{U}^2 \mathrm{d}x \mathrm{d}t  \leq C\Big(\mathbb{E}\int_{\mathcal {O}} \lambda^{-2} \mu^{-3} e^{-2 \mu(6 m+1)} \mathring{\theta}^{-2}(0) y_0 ^2 \mathrm{d}x \\
&+ \mathbb{E}\int_{\mathcal {O}_T} \lambda^3 \mu^4 \mathring{\xi}^3 \mathring{\theta}^2z^2   \mathrm{d}x \mathrm{d}t+  \mathbb{E}\int_{\mathcal {O}_T}  \lambda^2 \mu^2 \mathring{\xi}^3\mathring{\theta}^2Z^2 \mathrm{d}x \mathrm{d}t\Big),
\end{split}
\end{equation}
for all $\lambda\geq\lambda_0$ and $\mu \geq \mu_0$.
\end{lemma}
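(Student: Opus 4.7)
The plan is to mirror the proof of Lemma~\ref{lem7.1} from Section~\ref{sec3} with forward and backward roles exchanged, relying on the backward $L^2$-Carleman estimate \eqref{backcar} of Lemma~\ref{lem3.0} in place of \eqref{carleman7.1}. The new feature here is the simultaneous presence of two controls $(u,U)$ (on drift and diffusion), so both must be characterized via the adjoint.

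First I would introduce a non-degenerate weight $\mathring{\theta}_\epsilon$ obtained by freezing $\mathring{\gamma}$ near $t=T$ (analogous to \eqref{7.3}), so that $\mathring{\theta}_\epsilon \geq \mathring{\theta}$ pointwise and $\mathring{\theta}_\epsilon(T)$ is finite. Then I would consider the penalized extremal problem of minimizing
\begin{equation*}
J_\epsilon(u,U)= \tfrac{1}{2}\mathbb{E}\!\int_{\mathcal {O}'_T}\!\lambda^{-3}\mu^{-4}\mathring{\xi}^{-3}\mathring{\theta}^{-2}u^2+ \tfrac{1}{2}\mathbb{E}\!\int_{\mathcal {O}_T}\!\lambda^{-2}\mu^{-2}\mathring{\xi}^{-3}\mathring{\theta}^{-2}U^2+ \tfrac{1}{2}\mathbb{E}\!\int_{\mathcal {O}_T}\!\mathring{\theta}_\epsilon^{-2}y^2+\tfrac{1}{2\epsilon}\mathbb{E}\|y(T)\|^2
\end{equation*}
subject to \eqref{3.1} over the natural weighted admissible set. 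Strict convexity, continuity and coercivity yield a unique minimizer $(u_\epsilon,U_\epsilon)$ with state $y_\epsilon$. The Euler--Lagrange equations, derived via a duality analogous to \eqref{7.4}--\eqref{7.5}, introduce the adjoint pair $(q_\epsilon,Q_\epsilon)$ solving the backward SPDE
\begin{equation*}
\mathrm{d}q_\epsilon+\nabla\cdot(\mathcal {A}\nabla q_\epsilon)\mathrm{d}t=-\mathring{\theta}_\epsilon^{-2}y_\epsilon\,\mathrm{d}t+Q_\epsilon\,\mathrm{d}W_t \ \textrm{in}\ \mathcal {O}_T,\quad q_\epsilon(T)=\epsilon^{-1}y_\epsilon(T),
\end{equation*}
together with the optimality relations $u_\epsilon=-\lambda^3\mu^4\mathring{\xi}^3\mathring{\theta}^2q_\epsilon$ on $\mathcal {O}'_T$ and $U_\epsilon=-\lambda^2\mu^2\mathring{\xi}^3\mathring{\theta}^2Q_\epsilon$ on $\mathcal {O}_T$.

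Next I would apply It\^o's formula to $y_\epsilon q_\epsilon$, integrating by parts so that the symmetric divergence terms cancel, and substituting the optimality relations. Using $q_\epsilon(T)=\epsilon^{-1}y_\epsilon(T)$ this produces the duality identity
\begin{equation*}
\begin{split}
&\tfrac{1}{\epsilon}\mathbb{E}\|y_\epsilon(T)\|^2+\mathbb{E}\!\int_{\mathcal {O}_T}\mathring{\theta}_\epsilon^{-2}y_\epsilon^2+\mathbb{E}\!\int_{\mathcal {O}'_T}\lambda^{-3}\mu^{-4}\mathring{\xi}^{-3}\mathring{\theta}^{-2}u_\epsilon^2+\mathbb{E}\!\int_{\mathcal {O}_T}\lambda^{-2}\mu^{-2}\mathring{\xi}^{-3}\mathring{\theta}^{-2}U_\epsilon^2 \\
&=\mathbb{E}\!\int_{\mathcal {O}}y_0 q_\epsilon(0)+\mathbb{E}\!\int_{\mathcal {O}_T}[\lambda^3\mu^4\mathring{\xi}^3\mathring{\theta}^2 z\,q_\epsilon+\lambda^2\mu^2\mathring{\xi}^3\mathring{\theta}^2 Z\,Q_\epsilon],
\end{split}
\end{equation*}
analogous to \eqref{7.6}. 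The three RHS terms are each split by Young's inequality into a small multiple of the LHS of \eqref{backcar} applied to $(q_\epsilon,Q_\epsilon)$, plus the target data $y_0,z,Z$ with the correct weighted norms appearing in \eqref{3.2}. Applying Lemma~\ref{lem3.0} with $\phi=-\mathring{\theta}_\epsilon^{-2}y_\epsilon$ and $\textbf{b}\equiv\textbf{0}$, and using $\mathring{\theta}^2\mathring{\theta}_\epsilon^{-4}\leq\mathring{\theta}_\epsilon^{-2}$ together with the optimality relations, the $Q_\epsilon^2$-source contribution in \eqref{backcar} becomes $\mathbb{E}\!\int\lambda^{-2}\mu^{-2}\mathring{\xi}^{-3}\mathring{\theta}^{-2}U_\epsilon^2$ and can be absorbed into the LHS. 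Choosing $\delta$ small delivers the uniform bound for $\tfrac{1}{\epsilon}\mathbb{E}\|y_\epsilon(T)\|^2$, $\mathbb{E}\|u_\epsilon\|^2$, $\mathbb{E}\|U_\epsilon\|^2$ and $\mathbb{E}\|y_\epsilon\|^2$ in the weighted norms of \eqref{3.2}.

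For the gradient term $\mathbb{E}\!\int\lambda^{-2}\mu^{-2}\mathring{\xi}^{-3}\mathring{\theta}^{-2}|\nabla y_\epsilon|^2$, I would apply It\^o to $\lambda^{-2}\mu^{-2}\mathring{\xi}^{-2}\mathring{\theta}_\epsilon^{-2}y_\epsilon^2$ and integrate by parts as in \eqref{7.9}--\eqref{7.12}. The ellipticity of $\mathcal {A}$ gives the gradient term with a factor $2c_0$, while the It\^o quadratic-variation contribution produces $\lambda^{-2}\mu^{-2}\mathring{\xi}^{-2}\mathring{\theta}_\epsilon^{-2}(\lambda^2\mu^2\mathring{\xi}^3\mathring{\theta}^2Z+U_\epsilon)^2$; after expansion and using $\mathring{\theta}\leq\mathring{\theta}_\epsilon$, this is dominated by $C[\lambda^2\mu^2\mathring{\xi}^3\mathring{\theta}^2 Z^2+\lambda^{-2}\mu^{-2}\mathring{\xi}^{-3}\mathring{\theta}^{-2}U_\epsilon^2]$, both already controlled. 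The remaining $(\mathring{\xi}^{-2}\mathring{\theta}_\epsilon^{-2})_t$ term is handled by splitting $[0,T]$ into the three natural sub-intervals as in the proof of Theorem~\ref{thm1}, using the sign of $\mathring{\gamma}_t$ and the bound $|\mathring{\gamma}_t|\leq C\mathring{\gamma}^2$ on $[3T/4,T)$. Finally, the uniform bounds are independent of $\epsilon$, so weak subsequential limits $(\tilde{u},\tilde{U},\tilde{y})$ exist; a duality argument identical to the one used in Lemma~\ref{lem7.1} identifies $(\tilde{y},\tilde{U})$ as the solution of \eqref{3.1} with control $\tilde{u}$, while $\epsilon^{-1}\mathbb{E}\|y_\epsilon(T)\|^2\leq C$ together with Fatou's lemma yields $\tilde{y}(T)=0$ in $\mathcal {O}$, $\mathbb{P}$-a.s., and \eqref{3.2} follows from lower semicontinuity. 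The main obstacle is Step~5: the $Q_\epsilon^2$-term in \eqref{backcar} appears with exactly the critical power $\lambda^2\mu^2\mathring{\xi}^3\mathring{\theta}^2$, so its absorption is only possible thanks to the precise form of the Euler--Lagrange relation for $U_\epsilon$; any mismatch in exponents would break the scheme, and the delicate choice of the cost functional $J_\epsilon$ is tailored to make this absorption work.
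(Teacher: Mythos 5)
Your overall scheme coincides with the paper's proof: the same penalized functional $J_\epsilon(u,U)$ with the $\mathring{\theta}_\epsilon$-regularized weight, the same Euler--Lagrange characterization through an adjoint backward SPDE (your $(q_\epsilon,Q_\epsilon)$ is $(-z_\epsilon,-Z_\epsilon)$ in the notation of \eqref{3.6}, so the optimality relations agree up to sign), the same duality identity from It\^o applied to $y_\epsilon q_\epsilon$, absorption via the backward Carleman estimate \eqref{backcar} with $\phi=\mathring{\theta}_\epsilon^{-2}y_\epsilon$ and $\textbf{b}\equiv\textbf{0}$, a weighted It\^o energy identity for the gradient, and the weak-limit/Fatou argument; your closing remark that the $Q_\epsilon^2$ source term of \eqref{backcar} matches exactly the $U$-cost weight is precisely the point the paper exploits to pass from \eqref{3.8} to \eqref{3.10}.

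There is, however, one concrete quantitative slip in your gradient step. You apply It\^o to $\lambda^{-2}\mu^{-2}\mathring{\xi}^{-2}\mathring{\theta}_\epsilon^{-2}y_\epsilon^2$, transplanting the exponent from \eqref{7.9}; but here the source weights in \eqref{3.1} and the target weight in \eqref{3.2} force the exponent $-3$. With your weight the quadratic-variation contribution is $\lambda^{-2}\mu^{-2}\mathring{\xi}^{-2}\mathring{\theta}_\epsilon^{-2}\bigl(\lambda^2\mu^2\mathring{\xi}^3\mathring{\theta}^2 Z+U_\epsilon\bigr)^2\le 2\lambda^2\mu^2\mathring{\xi}^4\mathring{\theta}^4\mathring{\theta}_\epsilon^{-2}Z^2+2\lambda^{-2}\mu^{-2}\mathring{\xi}^{-2}\mathring{\theta}_\epsilon^{-2}U_\epsilon^2$, and neither summand is dominated by $C\bigl[\lambda^2\mu^2\mathring{\xi}^3\mathring{\theta}^2Z^2+\lambda^{-2}\mu^{-2}\mathring{\xi}^{-3}\mathring{\theta}^{-2}U_\epsilon^2\bigr]$ as you claim: the discrepancy is a factor $\mathring{\xi}\ge e^{6\mu m}$, which moreover blows up as $t\to T^-$, and since $Z$ is arbitrary data the $\mathring{\xi}^4$-weighted integral cannot be controlled by the $\mathring{\xi}^3$-weighted one allowed on the right-hand side of \eqref{3.2}. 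The repair is exactly the paper's choice in \eqref{3.11}: take the weight $\lambda^{-2}\mu^{-2}\mathring{\xi}^{-3}\mathring{\theta}_\epsilon^{-2}y_\epsilon^2$; then the quadratic-variation terms become $\lambda^2\mu^2\mathring{\xi}^3\mathring{\theta}^4\mathring{\theta}_\epsilon^{-2}Z^2\le\lambda^2\mu^2\mathring{\xi}^3\mathring{\theta}^2Z^2$ and $\lambda^{-2}\mu^{-2}\mathring{\xi}^{-3}\mathring{\theta}_\epsilon^{-2}U_\epsilon^2\le\lambda^2\mu^2\mathring{\xi}^3\mathring{\theta}^2Z_\epsilon^2$, both already bounded via \eqref{3.10}, and the resulting gradient estimate carries precisely the $\mathring{\xi}^{-3}$ weight required in \eqref{3.2}. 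With this one-exponent correction your argument is the paper's.
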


\begin{proof}
Setting $\mathring{\theta}_\epsilon=e^{\lambda \mathring{\varphi} _{\epsilon}}$, where
$
\mathring{\varphi}_{\epsilon}(x,t)= \mathring{\gamma}_{\epsilon}(t)\big(e^{\mu (\beta(x)+6m)}-\mu e^{6\mu (m+1)}\big) 
$
and
\begin{equation} \label{3.3}
\mathring{\gamma}_{\epsilon}(t)=\left\{
\begin{aligned}
&1+(1- 4t/T)^\sigma  && \textrm{in}~ {[0,T/4]},\\
&1  && \textrm{in}~{[T/4,T/2+\epsilon]},\\
&\textrm{is increasing}  &&  \textrm{in}~{[T/2+\epsilon,3T/4 ]},\\
&1/(T-t+\epsilon)^m  &&  \textrm{in}~{[3T/4 ,T)}.
\end{aligned}
\right.
\end{equation}
From the definition of \eqref{3.3} and the property of $\mathring{\gamma}$, we are readily to see that $\mathring{\varphi}_{\epsilon} $ is non-degenerate at $t=T$, and $\mathring{\gamma}(t)\geq\mathring{\gamma}_{\epsilon}(t)$ for any $t\in [0,T]$.
Define an admissible control set $\mathcal {U}$, which consists of all $(u,U)\in L^2_\mathbb{F}(0,T;L^2(\mathcal {O}'))\times L^2_\mathbb{F}(0,T;L^2(\mathcal {O}))$ such that
$\mathbb{E} \int_{\mathcal {O}'_T} (\lambda^{-3} \mu^{-4} \mathring{\xi}^{-3} \mathring{\theta} ^{-2}u^2+\lambda^{-2} \mu^{-2} \mathring{\xi}^{-3} \mathring{\theta} ^{-2}U^2)\mathrm{d}x\mathrm{d}t<\infty $.

Consider the problem $
(\overline{\textrm{\textbf{P}}}_\epsilon):\inf_{(u,U) \in \mathcal {U}} J_\epsilon(u,U) \hspace{2mm}\textrm{subject to \eqref{3.1}}
$
with
$
J_\epsilon(u,U) = \frac{1}{2} (\mathbb{E}\int_{\mathcal {O}_T} \mathring{\theta}_\epsilon^{-2} y^2 \mathrm{d}x\mathrm{d}t+\mathbb{E} \int_{\mathcal {O}'_T} \lambda^{-3} \mu^{-4} \mathring{\xi}^{-3} \mathring{\theta} ^{-2}u^2 \mathrm{d}x\mathrm{d}t + \mathbb{E}\int_{\mathcal {O}_T} \lambda^{-2} \mu^{-2} \mathring{\xi}^{-3} \mathring{\theta} ^{-2} U^2\mathrm{d}x\mathrm{d}t+\frac{1}{\epsilon}\mathbb{E}\int_{\mathcal {O}} y^2(T)\mathrm{d}x ) 
$. Similar to the proof of Lemma \ref{lem7.1}, 
%It is not hard to verify that the functional $J_\epsilon(u,U)$ is continuous, strictly convex and coercive over $\mathcal {U}$. Hence, 
  $(\overline{\textrm{\textbf{P}}}_\epsilon)$ admits a unique optimal control $(u_\epsilon,U_\epsilon)=(\lambda^{3} \mu^{4} \mathring{\xi}^{3} \mathring{\theta} ^{2}z_\epsilon\textbf{1}_{\mathcal {O}'},\lambda^{2} \mu^{2} \mathring{\xi}^{3} \mathring{\theta} ^{2}Z_\epsilon)\in \mathcal {U}$, where $(z_\epsilon,Z_\epsilon)$ solves
\begin{equation}\label{3.6}
\left\{
\begin{aligned}
&\mathrm{d} z_\epsilon+\nabla\cdot(\mathcal {A}\nabla z_{\epsilon})  \mathrm{d}t= \mathring{\theta}_\epsilon^{-2} y_\epsilon  \mathrm{d}t+ Z_\epsilon \mathrm{d}W_t ~\textrm{in}~\mathcal {O}_T,\\
&z_\epsilon=0~ \textrm{on}~\Sigma_T,\quad
z_\epsilon(T)= -\frac{1}{\epsilon} y_\epsilon (T) ~\textrm{in}~\mathcal {O},
\end{aligned}
\right.
\end{equation}
and $ y_\epsilon $ is the solution to \eqref{3.1} associated with $(u_\epsilon,U_\epsilon)$.  

\textsc{Estimate for $y_\epsilon$.} Let us define  
$\mathcal {F}_\epsilon=  \frac{1}{\epsilon}\mathbb{E}\int_\mathcal {O} y_\epsilon^2 (T)\mathrm{d}x+\mathbb{E} \int_{\mathcal {O}'_T} \lambda^{3} \mu^{4} \mathring{\xi}^{3} \mathring{\theta}^{2}z_\epsilon^2 \mathrm{d}x\mathrm{d}t+ \mathbb{E}\int_{\mathcal {O}_T} \lambda^{2} \mu^{2} \mathring{\xi}^{3} \mathring{\theta}^{2}Z_\epsilon^2 \mathrm{d}x \mathrm{d}t+\mathbb{E}\int_{\mathcal {O}_T} \mathring{\theta}_\epsilon^{-2}  y_\epsilon ^2 \mathrm{d}x \mathrm{d}t 
$. Applying the It\^{o} formula to $y_\epsilon  z_\epsilon$, it follows from \eqref{3.1}, \eqref{backcar}  and \eqref{3.6} that, for any $\delta>0$,
\begin{equation}\label{3.8}
\begin{split}
\mathcal {F}_\epsilon&\leq   \delta \mathbb{E} \int_{\mathcal {O}'_T} \lambda^3 \mu^4 \mathring{\xi}^3 \mathring{\theta}^2z_\epsilon^2 \mathrm{d} x \mathrm{d} t+  \delta \mathbb{E}\int_{\mathcal {O}_T}  (\theta_\epsilon^{-2} y_\epsilon^2 + \lambda^2 \mu^2 \mathring{\xi}^3\mathring{\theta}_\epsilon^2 Z_\epsilon^2 ) \mathrm{d} x \mathrm{d} t   \\
&  +\delta\mathbb{E}\int_{\mathcal {O}_T}  \lambda^2 \mu^2 \mathring{\xi}^3\mathring{\theta}_\epsilon^2Z_\epsilon^2 \mathrm{d}x \mathrm{d}t +C\mathbb{E}\int_{\mathcal {O}} \lambda^{-2} \mu^{-3} e^{-2 \mu(6 m+1)} \mathring{\theta}^{-2}(0) y_0 ^2 \mathrm{d}x\\
&  + C\mathbb{E}\int_{\mathcal {O}_T}  (\lambda^3 \mu^4 \mathring{\xi}^3 \mathring{\theta}^2z^2  + \lambda^2 \mu^2 \mathring{\xi}^3\mathring{\theta}^2Z^2 ) \mathrm{d}x \mathrm{d}t.
\end{split}
\end{equation}
Choosing $\delta>0$ sufficiently small, and using the representation of $(u_\epsilon,U_\epsilon)$, we get
\begin{fontsize}{9pt}{9pt}\begin{equation}\label{3.10}
\begin{split}
& \frac{1}{\epsilon}\mathbb{E}\int_\mathcal {O}y_\epsilon^2 (T)\mathrm{d}x+\mathbb{E} \int_{\mathcal {O}'_T} \lambda^{-3} \mu^{-4} \mathring{\xi}^{-3} \mathring{\theta}_{\epsilon}^{-2}u_\epsilon^2 \mathrm{d}x\mathrm{d}t+ \mathbb{E}\int_{\mathcal {O}_T} \mathring{\theta}_\epsilon^{-2} ( y_\epsilon ^2 +   \lambda^{-2} \mu^{-2} \mathring{\xi}^{-3} U_\epsilon^2) \mathrm{d}x \mathrm{d}t\\
&  \leq C\mathbb{E}\int_{\mathcal {O}} \lambda^{-2} \mu^{-3} e^{-2 \mu(6 m+1)} \mathring{\theta}^{-2}(0) y_0 ^2 \mathrm{d}x +C\mathbb{E}\int_{\mathcal {O}_T} \lambda^2 \mu^2 \mathring{\xi}^3\mathring{\theta}^2(\lambda \mu^2 z^2  + Z^2) \mathrm{d}x \mathrm{d}t.
\end{split}
\end{equation}
\end{fontsize}
	
\textsc{Estimate for $\nabla y_\epsilon$.} By applying the It\^{o} formula to $\lambda ^{-2}\mu ^{-2}\mathring{\theta}_\epsilon^{-2} \mathring{\xi} ^{-3}  y_\epsilon ^2$, we have
\begin{fontsize}{9.5pt}{9.5pt}\begin{equation}\label{3.11}
\begin{split}
&\mathbb{E} \int_{\mathcal {O}_T}\sum_{i,j} 2\lambda ^{-2}\mu ^{-2}\mathring{\theta}_\epsilon^{-2} \mathring{\xi} ^{-3} a^{ij} y_{\epsilon,x_i} y_{\epsilon,x_j}   \mathrm{d}x \mathrm{d}t-\mathbb{E} \int_{\mathcal {O}_T}(\lambda ^{-2}\mu ^{-2}\mathring{\theta}_\epsilon^{-2} \mathring{\xi} ^{-3})_t y_\epsilon ^2 \mathrm{d}x \mathrm{d}t \\
&= \mathbb{E} \int_{\mathcal {O}_T}2 \lambda  \mu^2 \mathring{\theta}_\epsilon^{-2} \mathring{\theta}^2  y_\epsilon  z  \mathrm{d}x \mathrm{d}t    +\mathbb{E} \int_{\mathcal {O}_T}(\lambda ^{-2}\mu ^{-2}\mathring{\theta}_\epsilon^{-2} \mathring{\xi} ^{-3} U_\epsilon ^2 + \lambda^2 \mu^2 \mathring{\xi}^3 \mathring{\theta}_\epsilon ^{-2}\mathring{\theta}^4 Z^2)  \mathrm{d}x\mathrm{d} t \\
&+ \mathbb{E} \int_{\mathcal {O}'_T}2\lambda ^{-2}\mu ^{-2}\mathring{\theta}_\epsilon^{-2} \mathring{\xi} ^{-3} y_\epsilon    u_\epsilon \mathrm{d}x \mathrm{d}t+ \mathbb{E} \int_{\mathcal {O}} \lambda ^{-2}\mu ^{-2}\mathring{\theta}_\epsilon^{-2}(0) \mathring{\xi} ^{-3}(0)  y_0 ^2  \mathrm{d}x\\
&+\mathbb{E} \int_{\mathcal {O}_T}2  \mathring{\theta}_\epsilon ^{-2}\mathring{\theta}^2 Z U_\epsilon\mathrm{d}x\mathrm{d} t    - \mathbb{E} \int_{\mathcal {O}_T}2\sum_{i,j} a^{ij}(\lambda ^{-2}\mu ^{-2}\mathring{\theta}_\epsilon^{-2} \xi ^{-3})_{x_j} y_{\epsilon,x_i}   y_\epsilon   \mathrm{d}x \mathrm{d}t,
\end{split}
\end{equation}
where we used the fact that $\mathring{\varphi}^{-2}$ is non-degenerate at $t=0$ and $\mathring{\varphi}^{-2}(T)=0$.
\end{fontsize}

To estimate the second term on the L.H.S. of \eqref{3.11}, we divide the time interval into $[0,T]=[0,T/4]\cup [T/4,T]$. Since $1\leq\mathring{\gamma} \leq 2$ over $[0,T/4]$, $\mathring{\varphi}<0$ over $\mathcal {O}_T$  and $
\mathring{\gamma}_t  =-\frac{4}{T}\lambda\mu^2\sigma(1- \frac{4t}{T})^{\sigma-1}e^{\mu(6m-4)} \leq 0$, we find
$- (\lambda ^{-2}\mu ^{-2}\mathring{\theta}_\epsilon^{-2} \mathring{\xi} ^{-3} )_t 
%=  \frac{ \mathring{\gamma}_t }{\mathring{\gamma}} (2\lambda ^{-1}\mu ^{-2}\mathring{\theta} ^{-2} \mathring{\varphi}\mathring{\xi} ^{-3} + 3\lambda ^{-2}\mu ^{-2}\mathring{\theta} ^{-2} \mathring{\xi} ^{-3} ) \geq C \lambda ^{-1}\mu ^{-2} \mathring{\gamma}_t \mathring{\varphi}\mathring{\theta} ^{-2} \mathring{\xi} ^{-3}
\geq0$, which implies
\begin{equation}\label{3.13}
\begin{split}
-\mathbb{E} \int_0^{T/4}\int_{\mathcal {O}}(\lambda ^{-2}\mu ^{-2}\mathring{\theta}_\epsilon^{-2} \mathring{\xi} ^{-2})_t y_\epsilon ^2 \mathrm{d}x \mathrm{d}t\geq 0.
\end{split}
\end{equation}
Moreover, as
$| \mathring{\gamma}_t |\leq C \mathring{\gamma}^2$, $|\mathring{\gamma}_{\epsilon,t}|\leq C \mathring{\gamma}_\epsilon^2\leq C \mathring{\gamma} ^2$,  for all $t\in [T/4,T]$,
we have
\begin{equation*}
\begin{split}
%\theta_\epsilon=\theta,\quad
|(\lambda ^{-2}\mu ^{-2}\mathring{\theta}_\epsilon^{-2} \xi ^{-3})_t|
%\leq
%&2\left(\lambda ^{-1}\mu ^{-2} |\mathring{\gamma}_\epsilon'| + \lambda ^{-2}\mu ^{-2} | \mathring{\gamma}_t |\right) \mathring{\theta}_\epsilon^{-2}\mathring{\xi} ^{-3}(\mu e^{6\mu (m+1)}-e^{\mu (\beta(x)+6m)})\\
\leq & C \left(\lambda ^{-1}\mu ^{-1}  +  \lambda ^{-2}\mu ^{-1} \right) \mathring{\theta}_\epsilon^{-2} \mathring{\xi} ^{-1}  \frac{e^{6\mu (m+1)}}{e^{2\mu (\beta+6m)}}
\leq  C  \lambda ^{-1}\mu ^{-1}\mathring{\theta}_\epsilon^{-2},
\end{split}
\end{equation*}
which implies that
\begin{equation}\label{3.14}
\begin{split}
 \mathbb{E} \int_{T/4}^T\int_{\mathcal {O}}(\lambda ^{-2}\mu ^{-2}\mathring{\theta}_\epsilon^{-2} \mathring{\xi} ^{-3})_t y_\epsilon ^2 \mathrm{d}x \mathrm{d}t  &\leq \mathbb{E} \int_{T/4}^T\int_{\mathcal {O}}\lambda ^{-1}\mu ^{-1}\mathring{\theta}_\epsilon^{-2}  y_\epsilon ^2 \mathrm{d}x \mathrm{d}t.
%\leq \mathbb{E}  \int_{\mathcal {O}_T} \mathring{\theta}_\epsilon^{-2}  y_\epsilon ^2 \mathrm{d}x \mathrm{d}t.
\end{split}
\end{equation}
From the estimates \eqref{3.13}-\eqref{3.14}, we infer that
\begin{equation}\label{3.15}
\begin{split}
-\mathbb{E} \int_{\mathcal {O}_T}(\lambda ^{-2}\mu ^{-2}\mathring{\theta}_\epsilon^{-2} \mathring{\xi} ^{-3})_t y_\epsilon ^2 \mathrm{d}x \mathrm{d}t \geq -\mathbb{E}  \int_{\mathcal {O}_T} \mathring{\theta}_\epsilon^{-2}  y_\epsilon ^2 \mathrm{d}x \mathrm{d}t.
\end{split}
\end{equation}
By \eqref{3.11} and \eqref{3.15}, we get from the assumption (A$_1$) that
\begin{fontsize}{9.6pt}{9.6pt}\begin{equation} \label{3.17}
\begin{split}
&\mathbb{E} \int_{\mathcal {O}_T}2c_0\lambda ^{-2}\mu ^{-2}\mathring{\theta}_\epsilon^{-2} \mathring{\xi} ^{-3} |\nabla y_\epsilon|^2   \mathrm{d}x \mathrm{d}t
\leq C\Big(
\mathbb{E} \int_{T/4}^T\int_{\mathcal {O}}\lambda ^{-1}\mu ^{-1}\mathring{\theta}_\epsilon^{-2} \mathring{\xi} ^{-1} y_\epsilon ^2 \mathrm{d}x \mathrm{d}t\\
&+ \mathbb{E} \int_{\mathcal {O}_T} \lambda ^{-1}\mu ^{-1}\mathring{\theta}_\epsilon^{-2}\mathring{\xi}^{-2}|\nabla y_\epsilon| | y_\epsilon |  \mathrm{d}x \mathrm{d}t
  + \mathbb{E} \int_{\mathcal {O}'_T} \lambda ^{-2}\mu ^{-2}\mathring{\theta}_\epsilon^{-2} \mathring{\xi} ^{-3}| y_\epsilon    u_\epsilon| \mathrm{d}x \mathrm{d}t\\
  &+ \mathbb{E} \int_{\mathcal {O}_T} \lambda  \mu^2 \mathring{\theta}_\epsilon^{-1}\mathring{\theta} | y_\epsilon  z | \mathrm{d}x \mathrm{d}t
   +\mathbb{E} \int_{\mathcal {O}_T}  \lambda^2 \mu^2 \mathring{\xi}^3 \mathring{\theta}^2 Z^2  \mathrm{d}x \mathrm{d} t+\mathbb{E} \int_{\mathcal {O}_T}  \mathring{\theta}_\epsilon ^{-1}\mathring{\theta} | Z U_\epsilon|\mathrm{d}x\mathrm{d} t
\\
&+\mathbb{E} \int_{\mathcal {O}_T}\lambda ^{-2}\mu ^{-2}\mathring{\theta}_\epsilon^{-2} \mathring{\xi} ^{-3} U_\epsilon ^2  \mathrm{d}x\mathrm{d} t + \mathbb{E} \int_{\mathcal {O}} \lambda ^{-2}\mu ^{-2}\mathring{\theta}_\epsilon^{-2}(0) \mathring{\xi} ^{-3}(0) y_0 ^2  \mathrm{d}x\Big).
\end{split}
\end{equation}
\end{fontsize}
where we used the fact of $
|\nabla (\lambda ^{-2}\mu ^{-2}\mathring{\theta}_\epsilon^{-2} \mathring{\xi} ^{-3})|\leq C\lambda ^{-1}\mu ^{-1}\mathring{\theta}_\epsilon^{-2}\mathring{\xi}^{-2}
$. By using $\mathring{\theta}\mathring{\theta}_\epsilon^{-1}\leq1$ and applying the Young inequality to the R.H.S. of \eqref{3.17}, we get from \eqref{3.10} that
\begin{fontsize}{9.5pt}{9.5pt}\begin{equation*} 
\begin{split}
& \frac{1}{\epsilon}\mathbb{E}\int_\mathcal {O}y_\epsilon^2 (T)\mathrm{d}x+\mathbb{E}\int_{\mathcal {O}_T} \mathring{\theta}_\epsilon^{-2}  y_\epsilon ^2 \mathrm{d}x \mathrm{d}t+\mathbb{E} \int_{\mathcal {O}'_T} \lambda^{-3} \mu^{-4} \mathring{\xi}^{-3} \mathring{\theta}_{\epsilon}^{-2}u_\epsilon^2 \mathrm{d}x\mathrm{d}t\\
& +\mathbb{E} \int_{\mathcal {O}_T} \lambda ^{-2}\mu ^{-2}\mathring{\xi} ^{-3}\mathring{\theta}_\epsilon^{-2}  |\nabla y_\epsilon|^2   \mathrm{d}x \mathrm{d}t+ \mathbb{E}\int_{\mathcal {O}_T} \lambda^{-2} \mu^{-2} \mathring{\xi}^{-3} \mathring{\theta}_{\epsilon}^{-2}U_\epsilon^2 \mathrm{d}x \mathrm{d}t\\
&  \leq C\Big(\mathbb{E}\int_{\mathcal {O}} \lambda^{-2} \mu^{-3} e^{-2 \mu(6 m+1)} \mathring{\theta}^{-2}(0) y_0 ^2 \mathrm{d}x+ \mathbb{E}\int_{\mathcal {O}_T} \lambda^2 \mu^2 \mathring{\xi}^3\mathring{\theta}^2 (\lambda \mu^2z^2  + Z^2 ) \mathrm{d}x \mathrm{d}t\Big).
\end{split}
\end{equation*}\end{fontsize}
Since the R.H.S. of last inequality is uniformly bounded in $\epsilon$, the subsequent proof proceeds analogously to that of Lemma \ref{lem7.1}, we shall omit the details here.
\end{proof}

\begin{proof}[Proof of Theorem \ref{thm4}] The proof proceeds by analogy with the argument  for Theorem \ref{thm2}, we only emphasize the key estimates. Let $(z,Z)$ be   solution to \eqref{back}, and $\tilde{y}$ be solution to \eqref{3.1} associated with the control $(\tilde{u},\tilde{U})$ (cf. Lemma \ref{lem3.1}).  

\textsc{Estimate for $z$.} By applying the It\^{o} formula to $\tilde{y}z$ and considering the special case of $ y_0 \equiv0$ in Lemma \ref{lem3.1}, we obtain that for any $\delta>0$
\begin{fontsize}{8.5pt}{8.5pt}\begin{equation*} 
\begin{split}
&\mathbb{E}\int_{\mathcal {O}_T} \lambda^3 \mu^4 \mathring{\xi}^3 \mathring{\theta}^2z^2\mathrm{d}x \mathrm{d}t
 \leq  C\Big(\mathbb{E} \int_{\mathcal {O}'_T}\lambda ^{3}\mu ^{4}\mathring{\theta}^{2} \mathring{\xi} ^{3}z^2 \mathrm{d}x \mathrm{d}t  +   \mathbb{E} \int_{\mathcal {O}_T} \mathring{\theta}^{2}\big[ \phi^2 +\lambda^{2} \mu^{2} \mathring{\xi}^{3} (Z^2 +     |\textbf{b}|^2) \big]  \mathrm{d}x \mathrm{d}t\Big).
\end{split}
\end{equation*}
\end{fontsize}

\textsc{Estimate for $\nabla z$.} Applying the It\^{o} formula to  $\lambda \mu^2 \mathring{\xi} \mathring{\theta}^2z^2$ and then integrating by parts over $\mathcal {O}_T$, we infer that
%\begin{equation}
%\begin{split}
%\mathbb{E}\int_\mathcal {O} \lambda \mu^2 (\xi\theta^2)_{t=T} z^2(T) \mathrm{d}x =& \textcolor[rgb]{1.00,0.00,0.00}{\mathbb{E}\int_\mathcal {O} \lambda \mu^2 (\xi\theta^2)_{t=0} z^2(0) \mathrm{d}x}+\mathbb{E}\int_{\mathcal {O}_T} \lambda \mu^2 (\xi \theta^2)_t z^2 \mathrm{d}x\mathrm{d}t\\
%&+\mathbb{E}\int_{\mathcal {O}_T} 2\lambda \mu^2 z a^{ij}z_{x_i}(\xi \theta^2)_{x_j} \mathrm{d}x\mathrm{d}t+\textcolor[rgb]{1.00,0.00,0.00}{\mathbb{E}\int_{\mathcal {O}_T} 2\lambda \mu^2 \xi \theta^2 a^{ij}z_{x_i} z _{x_j}\mathrm{d}x\mathrm{d}t} \\
% &+ \mathbb{E}\int_{\mathcal {O}_T} 2\lambda \mu^2 \xi \theta^2 \left( a_1 z^2+a_2\cdot \nabla zz+a_3 zZ+zG\right) \mathrm{d}x\mathrm{d}t \\
% &+\mathbb{E}\int_{\mathcal {O}_T} \lambda \mu^2 \xi \theta^2Z^2 \mathrm{d}x\mathrm{d}t
%\end{split}
%\end{equation}
\begin{fontsize}{8.5pt}{8.5pt}\begin{equation}\label{3.30}
\begin{split}
&\mathbb{E}\int_\mathcal {O} \lambda \mu^2 (\mathring{\xi}\mathring{\theta}^2)(0) z^2(0) \mathrm{d}x+ \mathbb{E}\int_{\mathcal {O}_T}\sum_{i,j} 2\lambda \mu^2 \mathring{\xi} \mathring{\theta}^2 a^{ij}z_{x_i} z _{x_j}\mathrm{d}x\mathrm{d}t+\mathbb{E}\int_{\mathcal {O}_T} 2\lambda^2 \mu^2 \frac{ \mathring{\gamma}_t }{\mathring{\gamma}}  \mathring{\varphi} \mathring{\theta}^2 z^2 \mathrm{d}x\mathrm{d}t \\
&  =  \mathbb{E}\int_{\mathcal {O}_T} 2 \lambda \mu^2 z\textbf{b}\cdot\nabla(\mathring{\xi} \mathring{\theta}^2) \mathrm{d}x\mathrm{d}t-\mathbb{E}\int_{\mathcal {O}_T} \lambda \mu^2  \frac{ \mathring{\gamma}_t }{\mathring{\gamma}}\mathring{\xi} \mathring{\theta}^2    z^2 \mathrm{d}x\mathrm{d}t-\mathbb{E}\int_{\mathcal {O}_T} 2\lambda \mu^2 \mathring{\xi} \mathring{\theta}^2 z \phi \mathrm{d}x\mathrm{d}t\\
&   +\mathbb{E}\int_{\mathcal {O}_T} 2 \lambda \mu^2 \mathring{\xi} \mathring{\theta}^2 z \textbf{c}\cdot\nabla z   \mathrm{d}x\mathrm{d}t-\mathbb{E}\int_{\mathcal {O}_T} 2\sum_{i,j} \lambda \mu^2 a^{ij}(\mathring{\xi} \mathring{\theta}^2)_{x_j}z_{x_i}z \mathrm{d}x\mathrm{d}t\\
&-\mathbb{E}\int_{\mathcal {O}_T} 2\lambda \mu^2 \mathring{\xi} \mathring{\theta}^2 \left( \rho_1 z^2-\textbf{b}\cdot \nabla z+\rho_2 zZ \right) \mathrm{d}x\mathrm{d}t +\mathbb{E}\int_{\mathcal {O}_T} \lambda \mu^2 \mathring{\xi} \mathring{\theta}^2Z^2 \mathrm{d}x\mathrm{d}t ,
\end{split}
\end{equation}
\end{fontsize}
where we   used the identity $(\mathring{\xi}\mathring{\theta}^2)_t=\frac{ \mathring{\gamma}_t }{\mathring{\gamma}}\mathring{\xi} \mathring{\theta}^2+ \frac{2 \mathring{\gamma}_t }{\mathring{\gamma}}\lambda \mathring{\varphi} \mathring{\theta}^2$ and the fact of $\mathring{\theta}(T)=0$. By using the assumption (A$_1$) and the similar argument as we did for \eqref{7.9}, one can deduce the following estimate 
\begin{equation*}%\label{3.31}
\begin{split}
\textrm{L.H.S. of \eqref{3.30}}&\geq  \mathbb{E}\int_\mathcal {O} \lambda \mu^2 (\mathring{\xi}\mathring{\theta}^2)(0) z^2(0) \mathrm{d}x+\mathbb{E}\int_{\mathcal {O}_T} 2c_0\lambda \mu^2 \mathring{\xi} \mathring{\theta}^2 |\nabla z|^2\mathrm{d}x\mathrm{d}t\\
& +\mathbb{E}\int_0^{T/4} \int_{\mathcal {O}} \lambda^2\mu^2\frac{\mathring{\gamma}_t }{\mathring{\gamma}}\mathring{\varphi}\mathring{\theta}^2  z^2\mathrm{d}x\mathrm{d}t .
\end{split}
\end{equation*}
For the R.H.S. of \eqref{3.30}, by using $|\nabla(\mathring{\xi} \mathring{\theta}^2) |\leq \lambda\mu\mathring{\xi}^2\mathring{\theta}^2$, $|\mathring{\xi}_t|\leq C \lambda\mu\mathring{\xi}^3$ and the Young inequality, we have for any $\delta>0$
\begin{fontsize}{8.6pt}{8.6pt}\begin{equation*}%\label{cnm}
\begin{split}
&\mathbb{E}\int_{\mathcal {O}_T}\sum_{i,j} 2\lambda \mu^2 a^{ij}(\mathring{\xi} \mathring{\theta}^2)_{x_j}z_{x_i}z \mathrm{d}x\mathrm{d}t \leq \delta\mathbb{E}\int_{\mathcal {O}_T}  \lambda  \mu^2\mathring{\xi} \mathring{\theta}^2 |\nabla z|^2 \mathrm{d}x\mathrm{d}t+C\mathbb{E}\int_{\mathcal {O}_T}  \lambda^3 \mu^4\mathring{\xi}^3\mathring{\theta}^2 z^2 \mathrm{d}x\mathrm{d}t,\\
&\mathbb{E}\int_{\mathcal {O}_T} 2\lambda \mu^2 \mathring{\xi} \mathring{\theta}^2 \left( \rho_1 z^2-\textbf{b}\cdot \nabla z+\rho_2 zZ \right) \mathrm{d}x\mathrm{d}t \leq \delta\mathbb{E}\int_{\mathcal {O}_T}  \lambda \mu^2  \mathring{\xi}  \mathring{\theta}^2  |\nabla z|^2 \mathrm{d}x\mathrm{d}t\\
&\quad +C\mathbb{E}\int_{\mathcal {O}_T}  \lambda \mu^2 \mathring{\xi} \mathring{\theta}^2 \left(z^2 + Z^2\right) \mathrm{d}x\mathrm{d}t+ C\mathbb{E}\int_{\mathcal {O}_T}  \lambda  \mu^2 \mathring{\xi}  \mathring{\theta}^2 |\textbf{b} |^2 \mathrm{d}x\mathrm{d}t ,\\
&\mathbb{E}\int_{\mathcal {O}_T}  2\lambda \mu^2 \mathring{\xi} \mathring{\theta}^2 z \phi \mathrm{d}x\mathrm{d}t  \leq C  \mathbb{E}\int_{\mathcal {O}_T} \lambda^2 \mu^4 \mathring{\xi}^2 \mathring{\theta}^2 z^2 \mathrm{d}x\mathrm{d}t+C\mathbb{E}\int_{\mathcal {O}_T} \mathring{\theta}^2 \phi^2 \mathrm{d}x\mathrm{d}t , \\
&\mathbb{E}\int_{\mathcal {O}_T} \lambda \mu^2 \mathring{\xi}_t \mathring{\theta}^2    z^2 \mathrm{d}x\mathrm{d}t \leq C \mathbb{E}\int_{\mathcal {O}_T} \lambda^2 \mu^3   \mathring{\xi}^3 \mathring{\theta}^2    z^2 \mathrm{d}x\mathrm{d}t,\\
&\mathbb{E}\int_{\mathcal {O}_T} 2\lambda \mu^2z\textbf{b}\cdot\nabla(\mathring{\xi} \mathring{\theta}^2)  \mathrm{d}x\mathrm{d}t   \leq C \mathbb{E}\int_{\mathcal {O}_T}  \lambda^2\mu^4 \mathring{\xi} \mathring{\theta}^2 z^2 \mathrm{d}x\mathrm{d}t+ C\mathbb{E}\int_{\mathcal {O}_T}  \lambda^2 \mu^2 \mathring{\xi}^3 \mathring{\theta}^2 |\textbf{b} |^2 \mathrm{d}x\mathrm{d}t ,\\
&\mathbb{E}\int_{\mathcal {O}_T} 2 \lambda \mu^2 \mathring{\xi} \mathring{\theta}^2z \textbf{c}\cdot \nabla z \mathrm{d}x\mathrm{d}t  \leq \delta\mathbb{E}\int_{\mathcal {O}_T}  \mathring{\theta}^2 |\nabla z |^2 \mathrm{d}x\mathrm{d}t+ C\mathbb{E}\int_{\mathcal {O}_T}  \lambda^2 \mu^4 \mathring{\xi}^2 \mathring{\theta}^2 z^2 \mathrm{d}x\mathrm{d}t.
\end{split}
\end{equation*}
\end{fontsize}
Putting the above estimates into \eqref{3.30} and choosing $\delta>0$ small enough, we get the desired estimate \eqref{carleman3}.
\end{proof}

\subsection{Controllability of forward semi-linear parabolic SPDEs}

Thanks to Theorem \ref{thm4}, we can now establish null controllability for the parabolic SPDEs\begin{equation} \label{back1}
	\left\{
	\begin{aligned}
		&\mathrm{d} \hat{z}-\nabla\cdot (\mathcal {A}\nabla \hat{z}) \mathrm{d}t= \left(\langle \textbf{c}, \nabla \hat{z}\rangle+ \rho \hat{z}+ \phi + \nabla\cdot \textbf{b}+\textbf{1}_{\mathcal {O}'}\hat{u} \right)\mathrm{d}t+\hat{U}\mathrm{d}W_t~\textrm{in}~\mathcal {O}_T,\\
		&\hat{z}=0 ~\textrm{on}~\Sigma_T,\quad
		\hat{z}(0)  =z_0 ~\textrm{in}~\mathcal {O}.
	\end{aligned}
	\right.
\end{equation}

\begin{lemma} \label{thm5}
Assume that condition (A$_1$) holds, $\textbf{c}\in L_\mathbb{F}^\infty(0,T;L^\infty(\mathcal {O};\mathbb{R}^n))$, $\rho \in L_\mathbb{F}^\infty(0,T;L^\infty(\mathcal {O}))$, $\phi  \in L_\mathbb{F}^2(0,T;L^2(\mathcal {O}))$ and
$
\textbf{b} \in L^2_\mathbb{F}( 0,T;L^2(\mathcal {O};\mathbb{R}^n)) .
$
Then for any $z_0\in L^2_{\mathcal {F}_0}(\Omega; L^2(\mathcal {O}))$, there exists  $(\hat{u},\hat{U})$ such that the solution $\hat{z}$ to \eqref{back1} satisfies  $\hat{z}( T)=0$ in $\mathcal {O}$, $\mathbb{P}$-a.s. Moreover, there holds 
\begin{equation}\label{rb}
\begin{split}
&\mathbb{E}\int_{\mathcal {O}_T} \mathring{\theta}^{-2}\big[ \hat{z} ^2 +\lambda^{-2} \mu^{-2}\mathring{\xi}^{-3} (|\nabla \hat{z}|^2+\hat{U} ^2)\big] \mathrm{d}x\mathrm{d}t +\mathbb{E}\int_{\mathcal {O}'_T} \lambda^{-3}\mu^{-4}\mathring{\xi}^{-3}\mathring{\theta}^{-2}\hat{u}^2\mathrm{d}x\mathrm{d}t\\
&  \leq C\Big(\mathbb{E}\int_\mathcal {O} \lambda^{-1} \mu^{-2} e^{-6\mu m} e^{-2\lambda\varphi(0)}  z^2_0 \mathrm{d}x +   \mathbb{E}\int_{\mathcal {O}_T} \lambda^{-3} \mu^{-4} \mathring{\xi}^{-3} \mathring{\theta}^{-2}\phi^2\mathrm{d}x \mathrm{d}t\\
&+ \mathbb{E}\int_{\mathcal {O}_T} \lambda^{-1} \mu^{-2} \mathring{\xi}^{-1} \mathring{\theta}^{-2} |\textbf{b}|^2\mathrm{d}x\mathrm{d}t\Big),
\end{split}
\end{equation}
for all sufficiently large parameters $\lambda$ and $\mu$.
\end{lemma}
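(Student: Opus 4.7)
The plan is to adapt the penalized HUM / duality argument used in the proof of Theorem~\ref{lem4.1}, now with the time direction reversed: the forward SPDE \eqref{back1} is to be driven to zero at $t=T$ via an adjoint \emph{backward} SPDE, to which the new Carleman estimate \eqref{carleman3} from Theorem~\ref{thm4} directly applies. First I would introduce a regularized weight $\mathring{\theta}_\epsilon$ obtained by freezing $\mathring{\gamma}$ on a right neighborhood $[T-\epsilon,T]$ so that $\mathring{\theta}_\epsilon$ is non-degenerate at $t=T$ while $\mathring{\theta}\le\mathring{\theta}_\epsilon$ on $[0,T]$, and set up the penalized cost functional
\[
\begin{split}
J_\epsilon(u,U) & = \tfrac{1}{2\epsilon}\mathbb{E}\int_{\mathcal{O}}\hat{z}^2(T)\,\mathrm{d}x + \tfrac{1}{2}\mathbb{E}\int_{\mathcal{O}_T} \mathring{\theta}_\epsilon^{-2}\hat{z}^2 \,\mathrm{d}x\mathrm{d}t \\
& \quad + \tfrac{1}{2}\mathbb{E}\int_{\mathcal{O}_T} \lambda^{-2}\mu^{-2}\mathring{\xi}^{-3}\mathring{\theta}_\epsilon^{-2}(|\nabla\hat{z}|^2+\hat{U}^2)\,\mathrm{d}x\mathrm{d}t \\
& \quad + \tfrac{1}{2}\mathbb{E}\int_{\mathcal{O}'_T}\lambda^{-3}\mu^{-4}\mathring{\xi}^{-3}\mathring{\theta}^{-2}u^2\,\mathrm{d}x\mathrm{d}t,
\end{split}
\]
minimized subject to \eqref{back1} over the natural admissible set $\mathcal{H}$. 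Strict convexity, continuity and coercivity yield a unique minimizer $(\hat{u}_\epsilon,\hat{U}_\epsilon)\in\mathcal{H}$.

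Next, a standard Euler--Lagrange computation would give the characterization $\hat{u}_\epsilon = \lambda^3\mu^4\mathring{\xi}^3\mathring{\theta}^2 z_\epsilon \textbf{1}_{\mathcal{O}'}$ and $\hat{U}_\epsilon = \lambda^2\mu^2\mathring{\xi}^3\mathring{\theta}^2 Z_\epsilon$, where $(z_\epsilon,Z_\epsilon)$ solves a backward SPDE of the form \eqref{back} with terminal datum $z_\epsilon(T)=-\tfrac{1}{\epsilon}\hat{z}_\epsilon(T)$ and sources involving $\mathring{\theta}_\epsilon^{-2}\hat{z}_\epsilon$ together with the divergence-type contribution $\nabla\cdot(\lambda^{-2}\mu^{-2}\mathring{\xi}^{-3}\mathring{\theta}_\epsilon^{-2}\nabla\hat{z}_\epsilon)$. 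I would then apply the It\^o formula to $\hat{z}_\epsilon z_\epsilon$ and integrate by parts on $\mathcal{O}_T$, using \eqref{back1} and the adjoint equation, to obtain an identity whose left-hand side reproduces all terms on the L.H.S. of \eqref{rb} (together with the penalty $\tfrac{1}{\epsilon}\mathbb{E}\|\hat{z}_\epsilon(T)\|^2$) and whose right-hand side consists of the coupling terms $\mathbb{E}\int_\mathcal{O} z_\epsilon(0)z_0\,\mathrm{d}x$, $\mathbb{E}\int_{\mathcal{O}_T} z_\epsilon\phi\,\mathrm{d}x\mathrm{d}t$ and $\mathbb{E}\int_{\mathcal{O}_T}\textbf{b}\cdot\nabla z_\epsilon\,\mathrm{d}x\mathrm{d}t$. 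Young's inequality combined with the Carleman estimate \eqref{carleman3} applied to $(z_\epsilon,Z_\epsilon)$ -- the localization term $\mathbb{E}\int_{\mathcal{O}'_T}\lambda^3\mu^4\mathring{\xi}^3\mathring{\theta}^2 z_\epsilon^2\,\mathrm{d}x\mathrm{d}t$ being precisely the squared control norm via the characterization of $\hat{u}_\epsilon$ -- then yields, for $\lambda,\mu\ge 1$ sufficiently large, the uniform-in-$\epsilon$ estimate with exactly the weights $\lambda^{-1}\mu^{-2}e^{-6\mu m}e^{-2\lambda\varphi(0)}$, $\lambda^{-3}\mu^{-4}\mathring{\xi}^{-3}\mathring{\theta}^{-2}$ and $\lambda^{-1}\mu^{-2}\mathring{\xi}^{-1}\mathring{\theta}^{-2}$ appearing on the R.H.S. of \eqref{rb}.

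For the $|\nabla\hat{z}_\epsilon|^2$ and $\hat{U}_\epsilon^2$ contributions I would additionally apply the It\^o formula to $\lambda^{-2}\mu^{-2}\mathring{\xi}^{-3}\mathring{\theta}^{-2}\hat{z}_\epsilon^2$ and close the weighted energy balance as in \eqref{jj3}--\eqref{jj4}. Finally, since the bounds are independent of $\epsilon$, I would extract weakly convergent subsequences $\hat{u}_\epsilon\rightharpoonup\hat{u}$, $\hat{U}_\epsilon\rightharpoonup\hat{U}$ and $\hat{z}_\epsilon\rightharpoonup\hat{z}$, identify $\hat{z}$ as the solution to \eqref{back1} driven by $(\hat{u},\hat{U})$ by the duality argument of Lemma~\ref{lem7.1}, and use the penalty bound $\tfrac{1}{\epsilon}\mathbb{E}\|\hat{z}_\epsilon(T)\|^2\le C$ together with Fatou's lemma to conclude $\hat{z}(T)=0$ in $\mathcal{O}$, $\mathbb{P}$-a.s. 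The main obstacle is the careful treatment of the time-derivative $(\mathring{\xi}^{-3}\mathring{\theta}^{-2})_t$, whose sign changes across the subintervals $[0,T/4]$, $[T/4,T/2]$, $[T/2,3T/4]$ and $[3T/4,T)$ and must be estimated piecewise in the spirit of \eqref{3.13}--\eqref{3.15}; a secondary difficulty is that the divergence source $\nabla\cdot\textbf{b}$, after integration by parts, produces an $\mathring{\xi}^{-1}$-weighted contribution on $|\nabla z_\epsilon|$ which has to be absorbed by the gradient term $\lambda\mu^2\mathring{\xi}\mathring{\theta}^2|\nabla z|^2$ on the L.H.S. of \eqref{carleman3}, mirroring the passage from \eqref{7.17} to \eqref{wooo} in the proof of Theorem~\ref{thm2}.
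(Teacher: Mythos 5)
Your proposal follows essentially the same route as the paper's proof: a penalized HUM functional built with the regularized weight $\mathring{\theta}_\epsilon$, the Euler--Lagrange characterization of the optimal control through an adjoint backward SPDE with $H^{-1}$-type sources, It\^{o} duality combined with the Carleman estimate \eqref{carleman3} of Theorem \ref{thm4} to obtain $\epsilon$-uniform bounds, and a weak-limit/Fatou argument to conclude $\hat z(T)=0$. The only deviations are harmless: the paper regularizes by shifting the singularity, taking $\mathring{\gamma}_\epsilon=1/(T-t+\epsilon)^m$ near $t=T$ rather than freezing $\mathring{\gamma}$, and since the weighted gradient and $\hat U$ terms are already penalized in the cost functional, your additional It\^{o} step applied to $\lambda^{-2}\mu^{-2}\mathring{\xi}^{-3}\mathring{\theta}^{-2}\hat z_\epsilon^2$ is redundant (that step is only needed in Theorem \ref{lem4.1}, where $\hat Y$ is not a control).
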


\begin{proof} Let us consider the problem $(\mathring{\textrm{\textbf{P}}}_\epsilon): \min_{(u,U)\in \mathcal {H}'} \mathring{J}_\epsilon(u,U)$  subject  to \eqref{back1}, 
where
$
\mathring{J}_\epsilon(u,U)= \frac{1}{2\epsilon}\mathbb{E}\int_{\mathcal {O}} z^2(T)\mathrm{d}x+\frac{1}{2}\mathbb{E} \int_{\mathcal {O}'_T} \lambda^{-3} \mu^{-4} \mathring{\xi}^{-3} \mathring{\theta}^{-2}u^2 \mathrm{d}x\mathrm{d}t+\frac{1}{2}\mathbb{E}\int_{\mathcal {O}_T} (\mathring{\theta}_\epsilon^{-2} z^2 + \lambda ^{-2}\mu ^{-2}\mathring{\xi} ^{-3} \mathring{\theta}_\epsilon^{-2} |\nabla z|^2 +  \lambda^{-2} \mu^{-2} \mathring{\xi}^{-3} \mathring{\theta}^{-2}U^2) \mathrm{d}x\mathrm{d}t$, and
 $\mathcal {H}'$ is the space of   $(u,U)$ satisfying $\mathbb{E}( \int_{\mathcal {O}'_T} (\lambda^{-3} \mu^{-4} \mathring{\xi}^{-3} \mathring{\theta}^{-2}u^2 \mathrm{d}x\mathrm{d}t+  \int_{\mathcal {O}_T}\lambda^{-2} \mu^{-2} \mathring{\xi}^{-3} \mathring{\theta}^{-2}U^2 \mathrm{d}x\mathrm{d}t)<\infty$.  By using the Euler-Lagrange principle as we did before, one can show that the optimal control
$
(u_\epsilon,U_\epsilon)= (-\lambda^3\mu^4\mathring{\xi}^{3}\mathring{\theta}^{2}r_\epsilon \textbf{1}_{\mathcal {O}'},- \lambda^2\mu^2\mathring{\xi}^{3}\mathring{\theta}^{2}R_\epsilon),
$
where $(r_\epsilon,R_\epsilon)$ solves the backward system
\begin{equation}\label{8.1}
\left\{
\begin{aligned}
&\mathrm{d} r_\epsilon+ \nabla\cdot (\mathcal {A}\nabla r_\epsilon) \mathrm{d}t=  \big[-\rho r_\epsilon-\mathring{\theta}_\epsilon^{-2} z_\epsilon+\nabla\cdot(r_\epsilon\textbf{c} + \lambda^{-2} \mu^{-2}\mathring{\xi}^{-3}\mathring{\theta}_\epsilon^{-2} \nabla z_\epsilon)\big]\mathrm{d}t \\
&\quad\quad\quad\quad\quad\quad\quad\quad\quad+ R_\epsilon \mathrm{d} W_t~\textrm{in}~\mathcal {O}_T,\\
& r_\epsilon=0\hspace{2mm}\textrm{on}~\Sigma_T,\quad
r_\epsilon(T)= \frac{1}{\epsilon}z_\epsilon(T) \hspace{2mm}\textrm{in}~\mathcal {O}.
\end{aligned}
\right.
\end{equation}
Here, $ z_\epsilon \in \mathcal {W}_T$ denotes the unique solution to \eqref{back1} associated with the control pair $(u_\epsilon,U_\epsilon)$.
By applying the It\^{o} formula to the process $z_\epsilon r_\epsilon$ and proceeding as in Theorem \ref{lem4.1}, we get an upper bound for $(z_\epsilon,u_\epsilon, U_\epsilon)$, uniformly in $\epsilon$. Then the desired result follows from the same argument for Lemma \ref{lem7.1}.
\end{proof}

\begin{proof}[Proof of Theorem \ref{thm-nonlinear2}] The proof proceeds in exactly the same manner as that for Theorem \ref{thm-nonlinear1}. Indeed, let us consider the system 
\begin{equation*} 
	\left\{
	\begin{aligned}
		&\mathrm{d} y-\nabla\cdot (\mathcal {A}\nabla y) \mathrm{d}t= \left( \phi +\textbf{1}_{\mathcal {O}'}u\right)\mathrm{d}t+U\mathrm{d}W_t~\textrm{in}~\mathcal {O}_T,\\
		&y=0~\textrm{on}~\Sigma_T,\quad
		y(0)  =y_0~\textrm{in}~\mathcal {O},
	\end{aligned}
	\right.
\end{equation*}	
and define a mapping $
\mathscr{J}:\phi\in \mathscr{D}_{\lambda,\mu} \mapsto F_1(\omega,t,x,y,\nabla y)$, where 
$
\mathscr{D}_{\lambda,\mu}= \{\phi\in L^2_\mathbb{F}(0,T;L^2(\mathcal {O}));~\mathbb{E}\int_{\mathcal {O}_T} \lambda^{-3} \mu^{-4} \mathring{\xi}^{-3} \mathring{\theta}^{-2}\phi^2\mathrm{d}x \mathrm{d}t<\infty\}.
$ By using the assumption assumption (A$_3$) and Lemma \ref{thm5}, one can show that $\mathscr{J}$ is a contraction mapping in $\mathscr{D}_{\lambda,\mu}$, which leads to the desired result by using the Banach Fixed-point Theorem.
\end{proof}

\section*{Acknowledgments}
The authors would like to thank the editors and anonymous referees for their valuable comments and constructive suggestions, which have significantly enhanced both the results of this work and its presentation.

\bibliographystyle{siamplain}
\bibliography{references}
\end{document}